\numberwithin{equation}{section} 
\theoremstyle{plain}
\newtheorem{thm}{Theorem}[section]
\newtheorem*{thm*}{Theorem}
\newtheorem{prop}[thm]{Proposition}
\newtheorem*{prop*}{Proposition}
\newtheorem{lemma}[thm]{Lemma}
\newtheorem*{lemma*}{Lemma}
\newtheorem{cor}[thm]{Corollary}
\newtheorem*{cor*}{Corollary}
\theoremstyle{definition}
\newtheorem{dfn}[thm]{Definition}
\newtheorem*{dfn*}{Definition}
\newtheorem{rems}[thm]{Remark}
\newtheorem*{rems*}{Remark}
\newtheorem{ex}[thm]{Example}
\newtheorem*{ex*}{Example}
\newtheorem{cond}{Condition}
\newtheorem*{conj*}{Conjecture}
\newtheorem*{not*}{Notation}
\newcommand{\bl}{\ensuremath{\xi}}
\newcommand{\sqw}[2]{\ensuremath{\big< #1 \big>_{#2}}}
\newcommand{\abc}{\ensuremath{\! < \! \! < \!}}
\DeclareMathOperator{\Dom}{Dom}   
\DeclareMathOperator{\Tr}{Tr}     
\newcommand{\nm}[1]{\mbox{\ensuremath{\| #1 \|}}}
\newcommand{\fa}{\ensuremath{\forall}}
\newcommand{\inset}[2]{\ensuremath{\{ #1 \, | \, #2 \} }}
\newcommand{\inprod}[2]{\ensuremath{\langle #1 , #2 \rangle}}
\newcommand{\ceil}[1]{\ensuremath{\left\lceil #1 \right\rceil}}
\newcommand{\CC}{\ensuremath{\mathbb{C}}}
\newcommand{\RR}{\ensuremath{\mathbb{R}}}
\newcommand{\TT}{\ensuremath{\mathbb{T}}}
\newcommand{\ZZ}{\ensuremath{\mathbb{Z}}}
\newcommand{\NN}{\ensuremath{\mathbb{N}}}
\newlength{\indentation} \setlength{\indentation}{-1\parindent}
\newcounter{propcount}
\newcounter{prop2count}
\newenvironment{prop2list}[3]{\begin{list}{\normalfont{(\roman{prop2count})}}
	{\usecounter{prop2count}
	\setlength{\topsep}{4pt} \setlength{\itemsep}{#3pt}
	\setlength{\parsep}{2pt} \setlength{\labelsep}{3mm}
	\setlength{\leftmargin}{#1mm}
	\setlength{\rightmargin}{#2mm}}}
       {\end{list}}
\newlength{\tablength} \setlength{\tablength}{2.5mm}
\begin{document}

\journal{arXiv.org}


\begin{frontmatter}

\title{Measures from Dixmier Traces and Zeta Functions}

\author[adl,syd]{Steven Lord\corref{cor1}\fnref{arc}}
\ead{steven.lord@adelaide.edu.au}
\author[syd]{Denis Potapov\fnref{arc}}
\ead{d.potapov@unsw.edu.au}
\author[syd]{Fedor Sukochev\fnref{arc}}
\ead{f.sukochev@unsw.edu.au}

\cortext[cor1]{Corresponding Author}
\fntext[arc]{Research supported by the Australian Research Council}

\address[adl]{School of Mathematical Sciences, University of Adelaide, Adelaide, 5005, Australia.}
\address[syd]{School of Mathematics and Statistics, University of New South Wales, Sydney, 2052, Australia.}

\begin{abstract} 

For $L^\infty$-functions on a (closed) compact Riemannian manifold, the noncommutative residue and the Dixmier trace formulation of the noncommutative integral are shown to equate to a multiple of the Lebesgue integral. 
The identifications are shown to continue to, and be sharp at, $L^2$-functions.  For functions strictly in $L^p$, $1 \leq p < 2$,
symmetrised noncommutative residue and Dixmier trace formulas must be introduced, for which the identification
is shown to continue for the noncommutative residue.
However, a failure is shown for the Dixmier trace formulation at $L^1$-functions.   It is shown the noncommutative residue remains finite and recovers the Lebesgue integral for any integrable function while the Dixmier trace expression can diverge.

The results show that a claim in the monograph J.~M.~Gracia-Bond{\'{i}}a, J.~C.~V{\'{a}}rilly and H.~Figueroa,
Elements of Noncommutative Geometry, Birkh\"{a}user, 2001, that 
the equality on $C^\infty$-functions between the Lebesgue integral
and an operator-theoretic expression involving a Dixmier trace (obtained from Connes' Trace Theorem) can be extended to any integrable function, is false.
The results of this paper include a general presentation
for finitely generated von Neumann algebras of commuting bounded operators, including a bounded Borel or $L^\infty$ functional calculus version of $C^\infty$ results in IV.2.$\delta$ A.~Connes, Noncommutative Geometry, Academic Press, New York, 1994.

\end{abstract}

\begin{keyword}
Dixmier Trace \sep Zeta Functions \sep Noncommutative Integral \sep Noncommutative Geometry \sep Lebesgue Integral \sep Noncommutative Residue

\medskip \MSC Primary: 46L51 \sep 47B10 \sep 58J42 Secondary: 46L87
\end{keyword}

\end{frontmatter}


\section{Introduction}

For a separable complex Hilbert space $H$, denote by $\mu_{n}(T)$, $n \in \NN$, the singular values of a compact operator $T$, (\cite{S}, \S 1).
Denote by $\mathcal{L}^1:= \mathcal{L}^1(H) = \inset{T}{\nm{T}_1:=\sum_{n=1}^\infty \mu_n(T) < \infty}$
the trace class operators.  It has long been known, see
(\cite{BR}, Thm 2.4.21 p.~76) (\cite{Ped}, Thm 3.6.4 p.~55), that a positive linear functional $\rho$ on a weakly closed
$^*$-algebra $\mathcal{N}$ of bounded operators on $H$ is normal (i.e.~$\rho$ belongs to the predual $\mathcal{N}_*$) if and only if
\begin{equation} \label{eq:tr}
\rho(A) = \Tr(AT) \ , \ A \in \mathcal{N}
\end{equation}
for a trace-class operator $0 < T \in \mathcal{L}^1$.  Denote by $\mathcal{L}^{1,\infty}:= \mathcal{L}^{1,\infty}(H) = \inset{T}{\nm{T}_{1,\infty}:=\sup_k \log(1+k)^{-1} \sum_{n=1}^k \mu_n(T) < \infty}$ the compact operators whose partial sums of singular values are logarithmically divergent.  
In \cite{Dix}, J.~Dixmier constructed a non-normal semifinite trace on the bounded linear operators of $H$ using the weight
$$
\Tr_\omega(T) := \omega \left( \left\{ \frac{1}{\log(1+k)} \sum_{n=1}^k \mu_n(T)\right\}_{k=1}^\infty  \right) \ , \ T>0
$$
associated to a translation and dilation invariant state $\omega$ on $\ell^{\infty}$.  As $\Tr_\omega$ vanishes on $\mathcal{L}^{1,\infty}_0 := \mathcal{L}^{1,\infty}_0(H) = \inset{T}{0 = \nm{T}_0 := \limsup_k \log(1+k)^{-1} \sum_{n=1}^k \mu_n(T)}$ and $\mathcal{L}^1 \subset \mathcal{L}^{1,\infty}_0$,
non-normality can be seen from $0 = \sup_{\alpha} \Tr_\omega(T_\alpha) \not= \Tr_\omega(1) =\infty$ for any strongly convergent sequence or net of finite rank operators $T_\alpha \nearrow 1$.
Fix $0 < T \in \mathcal{L}^{1,\infty}$ and let $B(H)$ denote the
bounded linear operators on $H$.  The weight
$$
\phi_{\omega}(A) := \Tr_\omega(AT) (= \Tr_\omega( \sqrt{T}A\sqrt{T} ) = \Tr_\omega( \sqrt{A} T \sqrt{A}) ) \ , \ 0 < A \in B(H)
$$
is finite and, by linear extension,
\begin{equation} \label{eq:Dtr}
\phi_{\omega}(A) = \Tr_\omega(AT) \ , \ A \in B(H) .
\end{equation}
From the properties of singular values, see (\cite{S}, Thm 1.6),
it follows $|\phi_\omega(A)| \leq \nm{A} \Tr_\omega(T)$, $A \in B(H)$.  Thus $\phi_\omega$ is a positive linear functional, i.e.~$\phi_\omega \in B(H)^*$.  While it is evident from preceding statements that $\phi_\omega \not\in B(H)_*$, it remains open on which proper weakly closed $^*$-subalgebras of $B(H)$ the functional $\phi_\omega$ is normal.
That there exist proper weakly closed $^*$-subalgebras $\mathcal{N} \subset B(H)$ with $\phi_\omega \in \mathcal{N}_*$ is part of the content of this paper.

Traditional noncommutative integration theory is based on normal linear functionals on von Neumann algebras, see \cite{Se2} and the monographs \cite{BR}, \cite{Ped}, \cite{T} (among many).  So it is somewhat surprising, and a disparity, that the formula (\ref{eq:Dtr}) with its obscured normality, and not (\ref{eq:tr}), appears as the analogue of integration in noncommutative geometry.  That it does is due to numerous results of
A.~Connes achieved with the Dixmier trace, see \cite{C3}, (\cite{CN}, \S IV), and \cite{CM} (as a sample).  In Connes' noncommutative geometry the formula (\ref{eq:Dtr}) has been termed the noncommutative integral, e.g.~(\cite{GBVF}, p.~297), (\cite{Hawk}, p.~478), due to the link to noncommutative residues in differential geometry described by the following theorem of Connes, see
(\cite{C3}, Thm 1), (\cite{GBVF}, Thm 7.18 p.~293).

\begin{thm}[Connes' Trace Theorem] \label{thm:2}
Let $M$ be a compact $n$-dimensional manifold, $\mathcal{E}$ a complex vector bundle
on $M$, and $P$ a pseudodifferential operator of order $-n$ acting on sections of $\mathcal{E}$. Then the corresponding operator $P$ in $H=L^2(M, \mathcal{E})$ belongs to $\mathcal{L}^{1,\infty}(H)$ and one has:
$$
\Tr_\omega(P) =  \frac{1}{n} \mathrm{Res}(P)
$$
for any $\omega$.
\end{thm}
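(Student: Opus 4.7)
The plan is to establish the theorem in three steps: (i) show $P\in\mathcal{L}^{1,\infty}(H)$; (ii) determine the singular-value asymptotics of $P$; (iii) identify the resulting Dixmier trace with $\frac{1}{n}\mathrm{Res}(P)$. By splitting $P$ into self-adjoint real and imaginary parts and using the spectral decomposition of each part, one can write $P$ as a complex linear combination of positive pseudodifferential operators of order $-n$ (modulo a smoothing correction absorbed in lower-order terms). Since both $\Tr_\omega$ and $\mathrm{Res}$ are linear, it suffices to treat positive $P$.

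For step (i), fix a positive elliptic differential operator $\Delta$ of order $2$ acting on sections of $\mathcal{E}$ (a generalized Laplacian). Then $(1+\Delta)^{-n/2}$ is a positive pseudodifferential operator of order $-n$, and Weyl's law gives $\lambda_k(1+\Delta)\sim C\,k^{2/n}$, hence $(1+\Delta)^{-n/2}\in\mathcal{L}^{1,\infty}(H)$. Because $P(1+\Delta)^{n/2}$ is a classical pseudodifferential operator of order $0$ and thus bounded on $L^2(M,\mathcal{E})$, the ideal property of $\mathcal{L}^{1,\infty}$ yields $P\in\mathcal{L}^{1,\infty}(H)$.

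For steps (ii) and (iii), the central ingredient is the refined eigenvalue asymptotic
\[
\lambda_k(P) = \frac{c(P)}{k} + o\!\left(\frac{1}{k}\right), \qquad c(P) := \frac{1}{n(2\pi)^n}\int_{S^*M}\mathrm{tr}\,\sigma_{-n}(P)(x,\xi)\,d\sigma,
\]
where $\sigma_{-n}(P)$ is the principal symbol, $S^*M$ the cosphere bundle, and $d\sigma$ its Liouville measure. Karamata's Tauberian theorem then gives $\log(1+N)^{-1}\sum_{k=1}^{N}\lambda_k(P)\to c(P)$, so $P$ lies in the ``measurable'' part of $\mathcal{L}^{1,\infty}$ and $\Tr_\omega(P)=c(P)=\frac{1}{n}\mathrm{Res}(P)$ for every dilation-invariant state $\omega$.

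The main obstacle is establishing the asymptotic with the correct constant. The cleanest route is via complex powers: for positive $P$ of order $-n$, $P^s$ is trace class for $\mathrm{Re}(s)>1$, and Seeley's analysis of the symbol of $P^s$ shows that $\zeta_P(s):=\Tr(P^s)$ extends meromorphically with a simple pole at $s=1$ whose residue equals $c(P)$. A Hardy--Littlewood--Karamata Tauberian argument then transfers this residue into the $k^{-1}$-asymptotic of $\lambda_k(P)$. The identification of the pole residue with the cosphere integral is a local computation: one uses a partition of unity subordinate to trivializing charts, writes the residue of $\zeta_P$ as an amplitude integral on $T^*M$, and passes to polar coordinates in each cotangent fibre, where the radial integral over the degree $-n$ homogeneous principal symbol produces the dimensional factor $1/n$.
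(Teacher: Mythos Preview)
The paper does not supply a proof of this statement. Theorem~\ref{thm:2} is quoted as background, with the proof delegated to the references (\cite{C3}, Thm~1) and (\cite{GBVF}, Thm~7.18); the paper then \emph{uses} Connes' Trace Theorem (for instance in Example~\ref{ex:compact_M} and in the proof of Theorem~\ref{thm:result2}) rather than proving it. So there is no in-paper argument to compare your proposal against.

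On the merits of your outline: the strategy---Weyl asymptotics for a reference Laplacian to get $\mathcal{L}^{1,\infty}$ membership, Seeley's meromorphic continuation of $\zeta_P(s)$ to identify the residue at $s=1$ with the cosphere integral, and a Tauberian passage from the pole to Ces\`aro-logarithmic convergence---is the standard one in the cited sources. One point deserves more care: the claim that a self-adjoint order $-n$ pseudodifferential operator splits, via its spectral decomposition, into \emph{pseudodifferential} positive parts is not generally true; the positive and negative parts $P_\pm$ of a self-adjoint $\Psi$DO need not be classical $\Psi$DOs. The usual remedies are either (a) to first establish the formula for positive \emph{elliptic} $P$ (e.g.\ $(1+\Delta)^{-n/2}$) and then write an arbitrary order $-n$ operator as $P=B(1+\Delta)^{-n/2}$ with $B$ of order $0$, reducing the identity to a symbol computation for $\mathrm{Res}$ together with $\Tr_\omega(B(1+\Delta)^{-n/2})$; or (b) to add a sufficiently large multiple of $(1+\Delta)^{-n/2}$ so that the principal symbol becomes strictly positive and invoke a G{\aa}rding-type argument to obtain operator positivity modulo a trace-class (hence $\Tr_\omega$-null) correction. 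Either fix is routine and does not disturb the rest of your argument.
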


Here $\mathrm{Res}$ is the restriction of the
Adler-Manin-Wodzicki residue to pseudodifferential operators of order $-n$, \cite{Wod}, \cite{C3}.  
Let $\mathcal{E}$ be the exterior bundle on a (closed) compact Riemannian manifold $M$, $|\mathrm{vol}|$ the 1-density of $M$ (\cite{GBVF}, p.~258),
$f \in C^\infty(M)$,
$M_f$ the operator given by $f$ acting by multiplication on smooth sections of $\mathcal{E}$,
$\Delta$ the Hodge Laplacian on smooth sections of $\mathcal{E}$, and $P = M_f (1+\Delta)^{-n/2}$, which is a pseudodifferential operator of order $-n$.
Using Theorem \ref{thm:2}, see (\cite{GBVF}, Cor 7.21), (\cite{FB}, \S 1.1), or (\cite{Landi}, p.~98),
\begin{equation} \label{eq:integral}
\phi_\omega(M_f) = \Tr_\omega(M_f T_\Delta) = \frac{1}{2^{(n-1)}\pi^{\frac{n}{2}}\Gamma(\frac{n}{2} + 1)} \int_{M} f(x) |\mathrm{vol}|(x) \ , \ f \in C^\infty(M)
\end{equation}
where we set $T_\Delta := (1+\Delta)^{-n/2} \in \mathcal{L}^{1,\infty}$.  This has become the standard way
to identify $\phi_\omega$ with the Lebesgue integral for $f \in C^\infty(M)$, see
\emph{op.~cit.}.  We note that in equation (\ref{eq:integral}), without loss, we can
assume the operators act on the Hilbert space $L^2(M)$ instead of
$L^2(M, \mathcal{E})$.  As mentioned above $\phi_\omega \in B(L^2(M))^*$.  The mapping
 $\phi: f \mapsto M_f$ is an isometric $^*$-isomorphism of $C(M)$, the continuous
 functions on $M$, into $B(L^2(M))$.  In this way $\phi_\omega \in C(M)^* \cong
 \phi(C(M))^*$ and, as the left hand side of (\ref{eq:integral}) is continuous in
 $\nm{\cdot}$ and the right hand side is continuous in $\nm{\cdot}_\infty$, the
 formula (\ref{eq:integral}) can be extended to $f \in C(M)$. 
 
\medskip The mapping $\phi: f \mapsto M_f$ is also an isometric $^*$-isomorphism of $L^\infty(M)$, the essentially bounded
functions on $M$, into $B(L^2(M))$.  In this way $\phi_\omega \in L^\infty(M)^* \cong \phi(L^\infty(M))^*$.  Extending the formula (\ref{eq:integral}) to $f \in L^\infty(M)$ has remained an elusive exercise however.  
Corollary 7.22 of (\cite{GBVF}, p.~297) made the claim that (\ref{eq:integral}) holds for any integrable function.  The short proof applied monotone convergence to both sides of (\ref{eq:integral}) to extend from $C^\infty$-functions to $L^\infty$-functions.
Monotone convergence can be applied to the right hand side, since the integral is a normal linear function on $L^\infty(M)$. 
To apply monotone convergence to the left hand side it must be known $\phi_\omega \in L^\infty(M)_*$.  The monograph \cite{GBVF} contained no proof that $\phi_\omega$ was normal.  Indeed, it is apparent from the next paragraph that the extension of (\ref{eq:integral}) to $f \in L^\infty(M)$ is equivalent to the statement $\phi_\omega \in L^\infty(M)_*$.

The task does not appear to be simplified by simplifying the manifold.  T.~Fack recently presented an argument that (\ref{eq:integral}) extends to $f \in L^\infty(\TT)$ for the 1-torus $\TT$, (\cite{Fack}, pp.~29-30).  The argument contains an oversight and provides the extension only for the first Baire class functions on the 1-torus\footnote{Private communication by P.~Dodds.}.
Fack's argument raises the point that $\phi \in L^\infty(\TT)^*$ is translation invariant (\cite{Fack}, p.~29), i.e. $\phi_\omega(M_{T_a(f)}) = \phi(M_{f})$ where $T_a(f)(x) = f(x+a)$, $x,a \in \TT$, is a translation operator.
Therefore $\phi_\omega$, when normalised, provides an invariant state on $L^\infty(\TT)$ that agrees (up to a constant) with the integral on $C(\TT)$.  Even this is not sufficient.  There are an infinitude of inequivalent invariant states on $L^\infty(\TT)$ which agree with the Lebesgue integral on $C(\TT)$ (\cite{Rudin}, Thm 3.4) (and first Baire class functions\footnote{We are indebted to B.~de Pagter for pointing this out and bringing Rudin's paper to our attention.  We also thank P.~Dodds for additional explanation.}).  The inequivalent states are non-normal
as the Lebesgue integral provides the only normal invariant state of $L^\infty(\TT)$
(uniqueness of Haar measure).

\medskip In this paper we show that $\phi_\omega(M_f)$, $f \in L^\infty(M)$, is identical to the Lebesgue integral up to a constant. For flat torii the method is elementary
and the Lebesgue integral can be recovered directly without recourse to Connes' Trace Theorem.  Primarily though, we investigate the claim of (\cite{GBVF}, Cor 7.22 p.~297) that the operator-theoretic formula $\phi_\omega(M_f)$ can be identified with the Lebesgue integral for any integrable function $f$ on a (closed) compact Riemannian manifold.  The claim is false.
We show the result is sharp at $L^2(M)$, indeed in Theorem \ref{thm:result2} (see also Examples~\ref{ex:sec4} and \ref{ex:compact_M})
we obtain $f \in L^2(M) \Leftrightarrow M_f(1+\Delta)^{-n/2} \in \mathcal{L}^{1,\infty}$, here $n$ is the dimension of the manifold. This type of sharp result at $L^2(M)$ for $M$ a compact manifold is
well-known, see for example Hausdorff-Young, Cwikel and Birman-Solomjak estimates in (\cite{S}, \S 4).

The sharp result leaves opens the question of whether some modified operator-theoretic formula can be identified with the Lebesgue integral for $f \in L^p(M)$, $1 \leq p < 2$.  
Calculating the Dixmier trace of $(1+\Delta)^{-n/2}$ using the residue of a zeta function originated in (\cite{CM}, p.~236).  Set $T_\Delta := (1+\Delta)^{-n/2}$.  We find in Theorem \ref{thm:1b2} that the residue at $s=1$ of the zeta function
$\Tr(T_\Delta^{s/2}M_{f}T_\Delta^{s/2})$, $s > 1$, equates to the Lebesgue integral of $f \in L^1(M)$ up to a constant.  Surprisingly, the Dixmier trace fails to equate to this residue.  We obtain the pointed result for flat torii that $\Tr_\omega( T_\Delta^{1/2} M_{f} T_\Delta^{1/2})$ equates to the Lebesgue integral of $f \in L^{1+\epsilon}(\TT^n)$, $\epsilon > 0$, yet there exists $f \in L^1(\TT)$ such that
$T_\Delta^{1/2} M_{f} T_\Delta^{1/2} \notin \mathcal{L}^{1,\infty}$, see Theorem \ref{TCLonePlusEpsilon} and Lemma \ref{lemma:6.3}.
In this sense,  not only is the claim of (\cite{GBVF}, Cor 7.22) false, its spirit has turned out to be false.  It is the noncommutative residue, not a Dixmier trace, which provides an algebraic formula completely identifying with the Lebesgue integral.

\medskip The structure of the paper is as follows. Preliminaries and the statement of the results mentioned above are given in Section \ref{sec:1}.  Section \ref{sec:1.1} introduces Dixmier traces.  Section \ref{sec:res.1} summarises known results on the calculation of a Dixmier trace using the zeta function of a compact operator.  Statements involving the Lebesgue integral on a (closed) compact Riemannian manifold appear in Section \ref{sec:2.5}.  

General statements
involving arbitrary finitely generated commutative von Neumann algebras and positive operators $D^2$, where $D=D^*$ has compact resolvent, appear in Theorem \ref{thm:1b} in Section \ref{sec:2.4}.  Conditions on the eigenfunctions of
$D^2$ and a set of selfadjoint commuting bounded operators $A_1,\ldots,A_n$ provide
\begin{equation} \label{eq:intronormal}
\phi_\omega(f(A_1,\ldots,A_n)) = \int_F f \circ e(x) v(x) d\mu(x) \ , \ \fa f \in L^\infty(E,\nu)
\end{equation}
for some $v \in L^1(F,\mu)$.  Here the von Neumann algebra generated by $A_1, \ldots, A_n$ is identified with a space of essentially bounded functions $L^\infty(E,\nu)$ on the joint spectrum $E$, $U: H \to L^2(F,\mu)$ is a spectral representation of $A_1, \ldots, A_n$,
$ \cdot \circ e$ is a normal embedding of $L^\infty(E,\nu)$ into $L^\infty(F,\mu)$, and $0 < T = G(D) \in \mathcal{L}^{1,\infty}$, $G$ a positive bounded Borel function, has Dixmier trace independent of $\omega$.  The characterisation (\ref{eq:intronormal})
implies $\phi_\omega$
is a \emph{unique} (independent of $\omega$) and \emph{normal} positive linear functional on the von Neumann algebra generated by $A_1,\ldots,A_n$.  
Section \ref{sec:ex} contains examples where $\phi_\omega$ can and cannot be characterised by (\ref{eq:intronormal}).

Section \ref{sec:tech} begins the technical results and contains the proof of Theorem \ref{thm:1b}.  Results of Section \ref{sec:tech} that may be of independent interest include: a generalised Cwikel or Birman-Solomjak type identity in Corollary \ref{cor:2.3}; a specialised extension of noncommutative residue formulations of the Dixmier trace in Theorem \ref{cor:2.3a}, and; normality
results in Section \ref{sec:3}.
Section \ref{sec:5} contains the proofs of the results in Section \ref{sec:2.5} and finishes the paper.

\medskip \noindent \textbf{Acknowledgements:} We thank Peter Dodds and Ben de Pagter for discussions concerning invariant means and Baire class functions.  The third named author thanks Thierry Fack and Bruno Iochum for useful discussions concerning Connes' Trace Theorem, and Airat Bikchentaev for direction to his papers.

\section{Statement of Main Results} \label{sec:1}

\subsection{Preliminaries on Dixmier Traces} \label{sec:1.1}

\medskip Let $\ceil{x}$, $x \geq 0$, denote the ceiling function.
Define the maps $\ell^\infty \to \ell^\infty$ for $j \in \NN$ by
\begin{eqnarray*}
T_j( \{a_k\}_{k=1}^\infty ) & = & \{ a_{k+j} \}_{k=1}^\infty \ , \
\{a_k\}_{k=1}^\infty \in \ell^\infty \ \\
D_j( \{a_k \}_{k=1}^\infty) & = & \{ a_{\ceil{j^{-1}k}} \}_{k=1}^\infty \ , \
\{a_k\}_{k=1}^\infty \in \ell^\infty .
\end{eqnarray*}
Set $BL := \inset{0 < \omega \in (\ell^{\infty})^*}{\omega(1) = 1, \omega \circ T_j = \omega \ \forall j \in \NN}$ (the set of Banach Limits)
and $DL := \inset{0 < \omega \in (\ell^{\infty})^*}{\omega(1) = 1, \omega \circ D_j = \omega \ \forall j \in \NN}$.
Both sets of states on $\ell^\infty$ satisfy
\begin{equation} \label{eq:genL}
\liminf_k a_k \leq \omega(\{ a_k \}_{k=1}^\infty) \leq \limsup_k a_k 
\end{equation}
for a positive sequence $a_k \geq 0$, $k \in \NN$.  Such states
are considered generalised limits, i.e.~extensions of $\lim$ on $c$ to $\ell^\infty$.
Let $0 < T \in \mathcal{L}^{1,\infty}$.  Set $\gamma(T) := \left\{ \log(1+k)^{-1} \sum_{n=1}^k \mu_n(T)\right\}_{k=1}^\infty \in \ell^\infty$
and define
\begin{multline*}
DL_2 := \{ 0 < \omega \in (\ell^{\infty})^* \, | \, \omega(1) = 1,
\omega \text{ satisfies }(\ref{eq:genL}), \\
\omega(D_2(\gamma(T))) = \omega(\gamma(T)) \ \forall 0 < T \in \mathcal{L}^{1,\infty} \} .
\end{multline*}
From (\cite{AF}, \S 5 Prop 5.2) or (\cite{CN}, pp.~303-308), for any $\omega \in DL_2$,
$$
\Tr_\omega(T) := \omega(\gamma(T)) \ , \ 0 < T \in \mathcal{L}^{1,\infty}
$$
defines a finite trace weight on $\mathcal{L}^{1,\infty}$ that vanishes on $\mathcal{L}^{1,\infty}_0$.
The linear extension, also denoted $\Tr_\omega$, is a finite trace on $\mathcal{L}^{1,\infty}$  that vanishes on $\mathcal{L}^{1,\infty}_0$.  Note
the condition that $\omega \in DL_2$ is weaker than the condition that
$\omega$ be dilation invariant, and weaker than Dixmier's original conditions, \cite{Dix}.

\subsection{Preliminaries on Residues of Zeta Functions} \label{sec:res.1}

\medskip A.~Connes introduced the association between a generalised zeta function,
$$
\zeta_T(s) := \Tr(T^s) = \sum_{n=1}^\infty \mu_{n}(T)^s \ , \ 0 < T \in \mathcal{L}^{1,\infty}
$$
and the calculation of a Dixmier trace with the result that
\begin{equation*} 
\lim_{s \to 1^+} (s-1) \zeta_T(s) = \lim_{N \to \infty} \frac{1}{\log(1+N)} \sum_{n=1}^{N} \mu_{n}(T)
\end{equation*}
if either limit exists, (\cite{CN}, p.~306).
Generalisations appeared in \cite{CRSS} and \cite{CPS}.  A short note, \cite{LS2}, authored by the first and third named authors, translated the results (\cite{CRSS}, Thm 4.11) and (\cite{CPS}, Thm 3.8) to $\ell^\infty$, see Theorem \ref{thm:resPA} and Corollary \ref{cor:resA} below.

We summarise the main result of \cite{LS2}, see \cite{CPS}, \cite{CRSS} and \cite{AF} for additional information. 
Define the averaging sequence $E : L^\infty([0,\infty)) \to \ell^\infty$ by
$$
E_k(f) :=  \int_{k-1}^k f(t)dt \ , \ f \in L^\infty([0,\infty)) .
$$
Define the map $L^{-1} : L^\infty([1,\infty)) \to L^\infty([0,\infty))$ by
$$
L^{-1}(g)(t) = g(e^t) \ , \ g \in L^\infty([1,\infty)).
$$
Define the piecewise mapping 
$
p : \ell^\infty \to L^\infty([1,\infty))
$
by
$$
p( \{a_k \}_{k=1}^\infty)(t) :=
\sum_{k=1}^\infty  a_k \chi_{[k,k+1)}(t) \ , \ \{ a_k \}_{k=1}^\infty \in \ell^\infty .
$$
Define, finally, the mapping $\mathcal{L} : (\ell^{\infty})^*
\to (\ell^{\infty})^*$ by
$$
\mathcal{L}(\omega) := \omega \circ E \circ L^{-1} \circ p \ , \
\omega \in (\ell^\infty)^*.
$$
We recall that $ T \in \mathcal{L}^{1,\infty}$ is called measurable (in the sense of Connes) if the value $\Tr_\omega(T)$ is independent of
$\omega \in DL_2$.  The equivalence between this definition of measurable and Connes' original (weaker) notion in (\cite{CN}, Def 7 p.~308) was shown in \cite{LSS}.

\begin{thm} \label{thm:resPA}
Let $P$ be a projection and $0 < T \in \mathcal{L}^{1,\infty}$.
Then, for any $\bl \in BL \cap DL$, $\mathcal{L}(\bl) \in DL_2$ and
$$
\Tr_{\mathcal{L}(\bl)}(PTP) = \bl \left( \frac 1k \Tr(PT^{1+\frac 1k}P) \right) .
$$
Moreover, $\lim_{k \to \infty} \frac 1k \Tr(PT^{1+\frac 1k}P)$ exists iff $PTP$ is measurable and in either case 
$$
\Tr_{\omega}(PTP) = \lim_{k \to \infty} \frac 1k \Tr(PT^{1+\frac 1k}P)
$$
for all $\omega \in DL_2$.
\end{thm}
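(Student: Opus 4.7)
The plan is to translate the statement via the bridge map $\mathcal{L}$ to the continuous $L^\infty([1,\infty))$-formulation, where Abelian results of \cite{CRSS} and \cite{CPS} relating zeta-function residues to logarithmic means of singular values can be applied directly, and then to derive the measurability equivalence as a corollary.

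First, I would verify that $\mathcal{L}(\bl) \in DL_2$ whenever $\bl \in BL \cap DL$. Positivity, normalisation, and the generalised-limit condition (\ref{eq:genL}) follow because each of $\bl$, $E$, $L^{-1}$, $p$ is a positive norm-one map that respects the envelopes of a positive sequence. The serious point is dyadic-dilation invariance on sequences of type $\gamma(T)$, $0 < T \in \mathcal{L}^{1,\infty}$. Here the key observation is that under $L^{-1} \circ p$ the multiplicative dilation $D_2$ on $\ell^\infty$ conjugates to translation by $\log 2$ on $L^\infty([0,\infty))$, and after composition with the averaging $E$ this becomes (up to an error absorbed in the tail) a finite combination of integer shifts $T_j$, on which $\bl \in BL$ is invariant.

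Next, to establish the identity
$$
\Tr_{\mathcal{L}(\bl)}(PTP) \;=\; \bl\!\left( \tfrac{1}{k}\,\Tr(PT^{1+\frac{1}{k}}P) \right),
$$
I would unwind the definitions so that the left side equals $\bl \circ E \circ L^{-1} \circ p \,(\gamma(PTP))$, and then invoke the Abelian identity: for a positive compact operator $R$ one has $(s-1)\Tr(R^s) \sim \gamma(R)_{N(s)}$ with $N(s) = \lceil e^{1/(s-1)}\rceil$, the error vanishing under any generalised limit. The map $E \circ L^{-1} \circ p$ is engineered so that, with the substitution $s = 1 + 1/k$, this asymptotic equivalence becomes an identity at the level of $\bl$. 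Applying this to $R = PTP$ and using cyclicity $\Tr(PT^s P) = \Tr(T^s P)$, one must replace $\Tr(PT^s P)$ by $\sum_n \mu_n(PTP)^s$ modulo an $o(1/(s-1))$ error; this is where Ky Fan--type inequalities controlling the singular values of compressions enter, the error being harmless because it lies in $\mathcal{L}^{1,\infty}_0$.

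Finally, for the measurability part: if $\lim_{k\to\infty} (1/k)\Tr(PT^{1+1/k}P) = c$ exists, then every $\bl \in BL \cap DL$ returns $c$ by (\ref{eq:genL}), so $\Tr_{\mathcal{L}(\bl)}(PTP) = c$ is independent of $\bl$; combined with the fact (from \cite{LSS}) that each $\omega \in DL_2$ agrees with some $\mathcal{L}(\bl)$ on the logarithmic-mean sequence $\gamma(PTP)$, this yields measurability of $PTP$ and the common value $c$. Conversely, measurability forces $\bl \mapsto \bl((1/k)\Tr(PT^{1+1/k}P))$ to be constant on $BL$, and since every cluster point of a bounded real sequence is attained by some Banach limit, the sequence must converge. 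The main obstacle is the Abelian identity in the second paragraph, and within it the passage from $\Tr(PT^sP)$ to $\sum_n \mu_n(PTP)^s$, since $P$ and $T$ need not commute; this is precisely the technical content imported from \cite{CRSS} and \cite{CPS}.
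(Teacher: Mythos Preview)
The paper does not give a proof here; it simply cites (\cite{LS2}, Thm 3.4), which in turn packages (\cite{CRSS}, Thm 4.11) and (\cite{CPS}, Thm 3.8) for the sequence setting. Your outline follows that same route: the bridge map $\mathcal{L}$, the Abelian relation between $(s-1)\Tr(R^s)$ and $\gamma(R)$, and the compression identity from \cite{CPS} (quoted elsewhere in the paper as ``the first display in the proof of (\cite{CPS}, Prop 3.6)'') replacing $\Tr((PTP)^s)$ by $\Tr(PT^sP)$ are exactly the right ingredients.

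There is, however, a gap in your measurability equivalence. In the forward direction you attribute to \cite{LSS} the claim that ``each $\omega \in DL_2$ agrees with some $\mathcal{L}(\bl)$ on $\gamma(PTP)$''; this is not what \cite{LSS} establishes. The relevant result from \cite{LSS}, recalled in the paper just before Proposition~\ref{prop:suff1}, is that $PTP$ is measurable if and only if the sequence $\gamma(PTP)$ itself converges. With this in hand the argument is direct: existence of $\lim_k k^{-1}\Tr(PT^{1+1/k}P)$ together with the \cite{CPS} compression identity gives convergence of $(s-1)\Tr((PTP)^s)$; the Tauberian half of the zeta--logarithmic-mean correspondence from \cite{CRSS} then yields $\gamma(PTP) \to c$, and every $\omega \in DL_2$ returns $c$ by (\ref{eq:genL}). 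For the converse you assert constancy of $\bl \mapsto \bl(k^{-1}\Tr(PT^{1+1/k}P))$ over all of $BL$, but your displayed identity has been established only for $\bl \in BL \cap DL$; the step ``every cluster point is attained by some Banach limit'' then requires the a priori stronger fact that $BL \cap DL$ already separates cluster points of this particular sequence, which you have not supplied. The \cite{LSS} characterisation again short-circuits the difficulty: measurability gives $\gamma(PTP)$ convergent, the Abelian direction gives $(s-1)\Tr((PTP)^s)$ convergent, and \cite{CPS} transfers this to $(s-1)\Tr(PT^sP)$.
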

\begin{proof}
See (\cite{LS2}, Thm 3.4).
\end{proof}

\begin{cor} \label{cor:resA}
Let $A \in B(H)$ and $0 < T \in \mathcal{L}^{1,\infty}$.
Then, for any $\bl \in BL \cap DL$,
$$
\Tr_{\mathcal{L}(\bl)}(AT) = \bl \left( \frac 1k \Tr(AT^{1+\frac 1k}) \right) .
$$
Moreover, $AT$ is measurable if $PTP$ is measurable for all  projections $P$ in the von Neumann algebra generated by $A$ and $A^*$. In this case,
$$
\Tr_{\omega}(AT) = \lim_{k \to \infty} \frac 1k \Tr(AT^{1+\frac 1k})
$$
for all $\omega \in DL_2$.
\end{cor}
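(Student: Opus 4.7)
The plan is to reduce both claims to the projection case supplied by Theorem~\ref{thm:resPA}, using the fact that both sides of the asserted identity are bounded linear functionals of $A$ and that selfadjoint elements of $B(H)$ are norm-limits of finite real linear combinations of their spectral projections.

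For the first formula, observe that $A\mapsto \Tr_{\mathcal{L}(\bl)}(AT)$ is bounded on $B(H)$ by $\nm{A}\Tr_{\mathcal{L}(\bl)}(T)$, and $A\mapsto\bl(\tfrac{1}{k}\Tr(AT^{1+1/k}))$ is bounded by $\nm{A}\cdot\bl(\tfrac{1}{k}\Tr(T^{1+1/k}))$, which is finite because the sequence $\tfrac{1}{k}\Tr(T^{1+1/k})$ is uniformly bounded in $k$ for $T\in\mathcal{L}^{1,\infty}$ (a standard consequence of $\mu_n(T)=O(\log(1+n)/n)$). When $A=P$ is a projection, cyclicity of the trace gives $\Tr(PT^{1+1/k})=\Tr(PT^{1+1/k}P)$ and, for the Dixmier trace, $\Tr_{\mathcal{L}(\bl)}(PT)=\Tr_{\mathcal{L}(\bl)}(PTP)$, so Theorem~\ref{thm:resPA} yields the identity for $A=P$. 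To promote this to arbitrary $A\in B(H)$, decompose $A=A_1+iA_2$ with $A_j=A_j^*$ in the von Neumann algebra generated by $A$ and $A^*$, and use the spectral theorem to approximate each $A_j$ in operator norm by Riemann-type sums $A_{j,\varepsilon}=\sum_i\lambda_i P_i$ of spectral projections. By linearity the identity holds for $A_{1,\varepsilon}+iA_{2,\varepsilon}$, and the uniform bounds above let $\varepsilon\to 0$.

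For the measurability claim, assume $PTP$ is measurable for every projection $P$ in the von Neumann algebra $\mathcal{N}$ generated by $A$ and $A^*$. Because the spectral projections of $A_1$ and $A_2$ lie in $\mathcal{N}$, Theorem~\ref{thm:resPA} gives $\lim_k\tfrac{1}{k}\Tr(PT^{1+1/k})=\Tr_\omega(PT)$ independently of $\omega\in DL_2$ for each such $P$; by linearity, the limit exists for each approximation $A_\varepsilon=A_{1,\varepsilon}+iA_{2,\varepsilon}$ and equals $\Tr_\omega(A_\varepsilon T)$. A Cauchy argument using the estimates
\[
\left|\tfrac{1}{k}\Tr((A-A_\varepsilon)T^{1+1/k})\right|\leq \nm{A-A_\varepsilon}\cdot\tfrac{1}{k}\Tr(T^{1+1/k}),\qquad |\Tr_\omega((A-A_\varepsilon)T)|\leq\nm{A-A_\varepsilon}\Tr_\omega(T)
\]
then shows that $\lim_k\tfrac{1}{k}\Tr(AT^{1+1/k})$ exists and coincides with $\Tr_\omega(AT)$ for every $\omega\in DL_2$, so $AT$ is measurable.

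The principal technical point is the uniform boundedness of $\tfrac{1}{k}\Tr(T^{1+1/k})$ in $k$, which supplies the controlling constants for all the $\varepsilon$-estimates; given this, the remainder is linear bookkeeping and norm approximation built on top of Theorem~\ref{thm:resPA}.
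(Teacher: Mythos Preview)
Your argument is correct and is precisely the natural route: reduce to projections via Theorem~\ref{thm:resPA}, use cyclicity of both the Dixmier trace and the ordinary trace to rewrite $PT$ as $PTP$, approximate the real and imaginary parts of $A$ in operator norm by finite real combinations of spectral projections (which lie in the von Neumann algebra generated by $A$ and $A^*$), and pass to the limit using that both functionals are operator-norm continuous in $A$ with constants controlled by $\sup_k k^{-1}\Tr(T^{1+1/k})<\infty$.  The paper does not supply its own argument here but simply cites \cite{LS2}, Cor~3.5; your proof is essentially what one finds there, so there is nothing substantive to compare.
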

\begin{proof}
See (\cite{LS2}, Cor 3.5).
\end{proof}

\subsection{Results for a Compact Riemannian Manifold} \label{sec:2.5}
 
\medskip Let $H$ be a separable complex Hilbert space and $D=D^*$ have compact resolvent.
Let $\{ h_m \}_{m=1}^\infty$ be a complete orthonormal system of eigenvectors of $D$
and $G(D)h_m = G(\lambda_m)h_m$ for any positive bounded Borel function $G$ where $\lambda_m$ are the eigenvalues of $D$.
Let $\bl \in BL \cap DL$ and $0 < G(D) \in \mathcal{L}^{1,\infty}$.  Then, from Corollary \ref{cor:resA},
$$
\Tr_{\mathcal{L}(\bl)}(AG(D)) = \bl \left( \frac 1k \sum_{m=1}^\infty G(\lambda_m)^{1+\frac 1k} \inprod{h_m}{Ah_m} \right) \ , \ A \in B(H) .
$$
As $\bl \in BL \cap DL$ vanishes on sequences converging to $0$, it follows that, for any $n \in \NN$,
$$
\Tr_{\mathcal{L}(\bl)}(AG(D)) = \bl \left( \frac 1k \sum_{m=n}^\infty G(\lambda_m)^{1+\frac 1k} \inprod{h_m}{Ah_m} \right) \ , \ A \in B(H) .
$$
Thus, for $A=A^*$ and $\bl \in BL \cap DL$,
$$
\inf_{m \geq n} \inprod{h_m}{Ah_m} \Tr_{\mathcal{L}(\bl)}(G(D)) \leq
\Tr_{\mathcal{L}(\bl)}(AG(D)) \leq \sup_{m \geq n} \inprod{h_m}{Ah_m} \Tr_{\mathcal{L}(\bl)}(G(D)) .
$$
Assuming $\Tr_{\mathcal{L}(\bl)}(G(D)) > 0$ and taking $n \to \infty$, we obtain the estimate
\begin{equation} \label{eq:estn}
\liminf_{m \to \infty} \inprod{h_m}{Ah_m} \leq
\frac{\Tr_{\mathcal{L}(\bl)}(AG(D))}{\Tr_{\mathcal{L}(\bl)}(G(D))}
 \leq \limsup_{m \to \infty} \inprod{h_m}{Ah_m}
 \ , \ A=A^* \in B(H)
\end{equation}
for any $\bl \in BL \cap DL$.

\begin{ex} \label{ex:ess}
Let $\TT^n$ be the flat $n$-torus, $\Delta$ be the Hodge Laplacian on $\TT^n$, and $0 < G(\Delta) \in \mathcal{L}^{1,\infty}$.
Then $h_{\mathbf{m}}(\mathbf{x}) = e^{i \mathbf{m} \cdot \mathbf{x}} \in L^2(\TT^n)$,
where $\mathbf{m} = (m_1,\ldots,m_n) \in \ZZ^n$ and $\mathbf{x} \in \TT^n$,
form a complete orthonormal system of eigenvectors of $\Delta$.
Let $M_f$ denote the operator of left multiplication of $f \in L^\infty(\TT^n)$ on $L^2(\TT^n)$, i.e.~$(M_fh)(\mathbf{x}) = f(\mathbf{x})h(\mathbf{x}) \ \fa h \in L^2(\TT^n)$.  Then
$$
\inprod{h_{\mathbf{m}}}{M_fh_{\mathbf{m}}} =
\int_{\TT^n} f(\mathbf{x}) d^n\mathbf{x} \ , \ f \in L^\infty(\TT^n)
$$
for all $\mathbf{m} \in \ZZ^n$.  Using the Cantor enumeration of
$\ZZ^n$, it follows from (\ref{eq:estn}) and for $\bl \in BL \cap DL$
that
\begin{equation} \label{eq:ess}
\Tr_{\mathcal{L}(\bl)}(M_fG(\Delta))
= \Tr_{\mathcal{L}(\bl)}(G(\Delta)) \int_{\TT^n} f(\mathbf{x}) d^n\mathbf{x} \ , \ f=\overline{f} \in L^\infty(\TT^n) .
\end{equation}
By linearity, (\ref{eq:ess}) holds for any $f \in L^\infty(\TT^n)$.
\end{ex}

The equality (\ref{eq:ess}) and the vanishing of $\Tr_{\mathcal{L}(\bl)}$ on $\mathcal{L}^1$ is, essentially, the proof of the following result for the flat torus $\TT^n$.

\begin{cor} \label{cor:result1}
Let $M$ be a $n$-dimensional (closed) compact Riemannian manifold with Hodge Laplacian $\Delta$. Set $T_\Delta := (1+\Delta)^{-n/2} \in \mathcal{L}^{1,\infty}(L^2(M))$.  Then 
$$
\phi_{\omega}(M_f) := \Tr_{\omega}(M_f T_\Delta) = c \int_{M} f(x) |\mathrm{vol}|(x) \ , \ \fa f \in L^\infty(M)
$$
where $c > 0$ is a constant independent of $\omega \in DL_2$.
\end{cor}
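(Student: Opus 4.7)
The plan has two movements: first fix the constant via Connes' Trace Theorem on smooth functions, then extend from $C(M)$ to $L^\infty(M)$ via a normality argument drawing on Theorem~\ref{thm:1b}.

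\textbf{Smooth and continuous case.} For $f \in C^\infty(M)$, the operator $P := M_f (1+\Delta)^{-n/2}$ is a classical pseudodifferential operator of order $-n$. Connes' Trace Theorem~\ref{thm:2} together with the standard Wodzicki residue computation recorded in~(\ref{eq:integral}) yields the identity with an explicit positive constant $c$ independent of $\omega \in DL_2$. Since $|\phi_\omega(M_f)| \leq \|f\|_\infty\, \Tr_\omega(T_\Delta)$ and $\|M_f\| = \|f\|_\infty$, both sides are $\|\cdot\|_\infty$-continuous in $f$, so by density of $C^\infty(M)$ in $C(M)$ the identity extends to all $f \in C(M)$.

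\textbf{Extension to $L^\infty(M)$.} The heart of the problem is to show that $f \mapsto \phi_\omega(M_f)$ is a normal positive linear functional on $L^\infty(M)$. Once this is in hand, both sides of the desired identity are normal positive linear functionals agreeing on the $\sigma$-weakly dense subalgebra $C(M) \subset L^\infty(M)$ (by Lusin's theorem), and must therefore coincide on all of $L^\infty(M)$ with the same constant $c$. To produce normality I would invoke Theorem~\ref{thm:1b}. Since $L^\infty(M)$ is not finitely generated as a von Neumann algebra in general, I would pass to a finite atlas: choose a cover of $M$ by coordinate charts $\{U_\alpha\}_{\alpha=1}^N$ with subordinate smooth partition of unity $\{\rho_\alpha\}$, and decompose $f = \sum_\alpha \rho_\alpha f$. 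Each summand lies in the finitely generated commutative von Neumann subalgebra of $\phi(L^\infty(M))$ produced by the bounded commuting operators $M_{\rho_\alpha}, M_{\rho_\alpha x^1_\alpha}, \ldots, M_{\rho_\alpha x^n_\alpha}$, on which Theorem~\ref{thm:1b} would give $\phi_\omega$ as a unique normal positive linear functional of the form~(\ref{eq:intronormal}). Summing the finitely many local contributions yields global normality of $\phi_\omega$ on $\phi(L^\infty(M))$ and simultaneously pins down the representing measure as a multiple of $|\mathrm{vol}|$.

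\textbf{Main obstacle.} The hard step is verifying the eigenfunction hypotheses of Theorem~\ref{thm:1b} on each chart. In the flat torus case of Example~\ref{ex:ess} the eigenfunctions $e^{i\mathbf{m}\cdot\mathbf{x}}$ of $\Delta$ are unimodular, which gives $\inprod{h_{\mathbf{m}}}{M_f h_{\mathbf{m}}} = \int_{\TT^n} f(\mathbf{x})\, d^n\mathbf{x}$ directly and forces the squeeze~(\ref{eq:estn}) to collapse to an equality; for a general Riemannian manifold no such pointwise identity is available and the corresponding statement must instead be extracted, on average, from heat-kernel or Weyl-type asymptotics of $\Delta$. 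I expect this asymptotic input is precisely what is packaged into the hypotheses of Theorem~\ref{thm:1b}, so the real labour lies in checking that the coordinate multiplication operators and the eigenbasis of $\Delta$ on each chart satisfy those hypotheses.
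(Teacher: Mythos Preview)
Your two-movement plan is sound in spirit but diverges from the paper's route and, as stated, has a real obstruction in the second movement.

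\textbf{What the paper does.} The paper does not prove Corollary~\ref{cor:result1} via normality and Theorem~\ref{thm:1b} at all. Instead it proves the stronger Theorem~\ref{thm:result2} first and then observes that Corollary~\ref{cor:result1} is immediate since $L^\infty(M)\subset L^2(M)$. The proof of Theorem~\ref{thm:result2} runs as follows: Example~\ref{ex:compact_M} uses local symbol estimates for the pseudodifferential operator $T_\Delta^s$ to show that the kernel function $k_s(x)=\sum_m (1+\lambda_m)^{-ns/2}|h_m(x)|^2$ is bounded above uniformly in $1<s\le 2$, which yields $\|f\|_{1,\infty,2}\le C\|f\|_2$. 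Fed into Corollary~\ref{cor:2.3} this gives the continuity estimate $\|M_fT_\Delta\|_{Z_1}\le C\|f\|_2\|T_\Delta\|_{Z_1}^{1/2}$, and hence $|\Tr_\omega(M_fT_\Delta)|\le C'\|f\|_2$. One then extends the $C^\infty$ identity (your first movement, via Connes' Trace Theorem) to all of $L^2(M)$ by $\|\cdot\|_2$-density. No normality argument is needed.

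\textbf{The gap in your route.} Your appeal to Theorem~\ref{thm:1b} requires the $(A_1,\ldots,A_n,U)$-domination hypothesis, i.e.\ a single $l\in L^1(M)$ with $|h_m(x)|^2\le l(x)$ for all eigenfunctions $h_m$ of $\Delta$. On a general closed manifold this is \emph{not} available: H\"ormander's $L^\infty$ bound $\|h_m\|_\infty=O(\lambda_m^{(n-1)/4})$ is sharp (attained on round spheres by zonal harmonics), so the pointwise suprema $\sup_m|h_m(x)|^2$ need not be integrable. Your instinct that analytic input of heat-kernel/Weyl type is required is correct, but it does not feed into Theorem~\ref{thm:1b} through domination; rather it gives the weaker averaged bound on $k_s(x)$ (essentially condition~(ii) of Proposition~\ref{prop:suff0}, not condition~(i)). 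The paper uses that averaged bound to get $L^2$-continuity directly, bypassing Theorem~\ref{thm:1b} for the manifold case. If you wish to salvage the normality route, you would need to replace the domination hypothesis by the weaker condition~(ii) of Proposition~\ref{prop:suff0} and then verify \emph{that} via the symbol estimates of Example~\ref{ex:compact_M}; but at that point you have done the same analytic work as the paper and the continuity argument is shorter.
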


Complete details of the technicalities of the proof are contained in subsequent sections.  As mentioned, the Corollary is known
for $f \in C^\infty(M)$ from the application of Connes' Trace Theorem, see (\cite{FB}, p.~34). To our knowledge a proof for $f \in L^\infty(M)$ has not been given before. 
The main result is the extension to $L^2(M)$.

\begin{thm} \label{thm:result2}
Let $M$, $\Delta$, $T_\Delta$ be as in Corollary \ref{cor:result1}.  Then
$M_f T_\Delta \in \mathcal{L}^{1,\infty}(L^2(M))$ if and only if $f \in L^2(M)$ and 
$$
\phi_{\omega}(M_f) := \Tr_{\omega}(M_f T_\Delta) = c \int_{M} f(x) |\mathrm{vol}|(x) \ , \ \fa f \in L^2(M)
$$
where $c > 0$ is a constant independent of $\omega \in DL_2$.
\end{thm}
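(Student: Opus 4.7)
The plan is to combine Corollary \ref{cor:result1}, which gives the formula for $f \in L^\infty(M)$, with the generalised Cwikel/Birman--Solomjak estimate of Corollary \ref{cor:2.3}, and to extend from $L^\infty(M)$ to $L^2(M)$ by truncation and continuity of $\Tr_\omega$ on $\mathcal{L}^{1,\infty}$. Note that by Sobolev embedding $T_\Delta$ maps $L^2(M)$ into $L^\infty(M)$, so $M_f T_\Delta$ is automatically a bounded operator on $L^2(M)$ whenever $f \in L^2(M)$, even when $M_f$ itself is unbounded.

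For the ``iff'' assertion I would invoke Corollary \ref{cor:2.3} in the form of a two-sided estimate
\begin{equation*}
c_1 \nm{f}_2 \leq \nm{M_f T_\Delta}_{1,\infty} \leq c_2 \nm{f}_2 , \qquad f \in L^2(M) .
\end{equation*}
The upper (Cwikel) bound gives $f \in L^2(M) \Rightarrow M_f T_\Delta \in \mathcal{L}^{1,\infty}(L^2(M))$. The lower (Birman--Solomjak) bound gives the converse, i.e.~the sharpness at $L^2$. On $M=\TT^n$ the lower estimate is obtained by a direct computation in the Fourier basis, paralleling Example \ref{ex:ess}; for a general compact Riemannian manifold one transfers this to the flat model via a partition of unity and Weyl-type asymptotics for $(1+\Delta)^{-n/2}$.

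For the integral identity, take $f \in L^2(M)$ and set $f_N := f \chi_{\{|f| \leq N\}} \in L^\infty(M)$, so that $f_N \to f$ in $L^2(M)$ by dominated convergence. The upper Cwikel bound applied to $f - f_N$ gives
\begin{equation*}
\nm{M_f T_\Delta - M_{f_N} T_\Delta}_{1,\infty} = \nm{M_{f-f_N} T_\Delta}_{1,\infty} \leq c_2 \nm{f - f_N}_2 \longrightarrow 0 .
\end{equation*}
Since $\Tr_\omega$ is bounded on the Banach ideal $(\mathcal{L}^{1,\infty}, \nm{\cdot}_{1,\infty})$ -- a standard consequence of $\Tr_\omega(T) = \omega(\gamma(T)) \leq \nm{T}_{1,\infty}$ for $T \geq 0$ together with the decomposition $T = T_R + iT_I$ and $T_R = T_R^+ - T_R^-$ (each summand having $\nm{\cdot}_{1,\infty}$ dominated by $\nm{T}_{1,\infty}$) -- one deduces $\Tr_\omega(M_{f_N} T_\Delta) \to \Tr_\omega(M_f T_\Delta)$. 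Dominated convergence handles $c \int_M f_N |\mathrm{vol}| \to c \int_M f |\mathrm{vol}|$, so passing to the limit in the identity of Corollary \ref{cor:result1} applied to each $f_N$ yields the formula for $f$, with the same constant $c$ and hence independent of $\omega \in DL_2$.

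The main obstacle is the lower half of the Cwikel--Birman--Solomjak estimate, i.e.~showing that $\mathcal{L}^{1,\infty}$-membership of $M_f T_\Delta$ forces $f \in L^2(M)$. This sharpness statement requires quantitative control of the eigenbasis of $\Delta$ and a matching lower bound for the Dixmier norm that survives localisation via a partition of unity on $M$; the upper bound and the subsequent density/continuity argument are essentially routine once this lower bound and the continuity of $\Tr_\omega$ on the ideal are in hand.
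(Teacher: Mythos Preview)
Your density/continuity strategy for the trace identity is the same as the paper's, but two of your ingredients need correcting.

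\medskip
\textbf{Upper bound and continuity.} Corollary~\ref{cor:2.3} does \emph{not} give $\nm{M_f T_\Delta}_{1,\infty} \leq c_2 \nm{f}_2$; it gives only the seminorm bound $\nm{M_f T_\Delta}_{Z_1} \leq C \nm{f}_2$ (see~\eqref{eq:1inf_est} and~\eqref{eq:manifold_inequality_2}). There is no control of the full $\nm{\cdot}_{1,\infty}$ norm in terms of $\nm{f}_2$ anywhere in the paper, and your convergence step $\nm{M_{f-f_N}T_\Delta}_{1,\infty}\to 0$ is not justified. The paper instead uses $|\Tr_\omega(T)| \leq \nm{T}_0 \leq e\,\nm{T}_{Z_1}$ (from \cite[Thm~4.5]{CRSS}), so the $Z_1$ bound alone is enough to pass to the limit in $\Tr_\omega$. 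With that replacement your truncation argument goes through; the paper phrases it with $C^\infty$ approximants and Connes' Trace Theorem rather than $L^\infty$ truncation and Corollary~\ref{cor:result1}, but this is cosmetic.

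\medskip
\textbf{The converse direction.} Here your plan diverges from the paper and makes the problem harder than it is. You propose a lower bound $c_1\nm{f}_2 \leq \nm{M_f T_\Delta}_{1,\infty}$ transferred from $\TT^n$ by a partition of unity---but lower bounds in the quasi-norm $\nm{\cdot}_{1,\infty}$ do not localise well, and you give no mechanism for this. The paper's route is much simpler: Corollary~\ref{cor:2.3}(i) already gives $M_f T_\Delta \in \mathcal{L}^{1,\infty} \Rightarrow f \in L^2(M,\mu_2)$ directly, because $\mathcal{L}^{1,\infty} \subset \mathcal{L}^2$ and Proposition~\ref{prop:2.1} yields the exact identity $\nm{M_f T_\Delta}_2 = \nm{f}_{2,\mu_2}$. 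The only remaining work is to show $L^2(M,\mu_2) = L^2(M)$, i.e.\ that the Radon--Nikodym density $k_2(x) = \sum_m (1+\lambda_m^2)^{-n}|h_m(x)|^2$ is bounded above and below by positive constants; this is done in Example~\ref{ex:compact_M} via local symbol estimates for pseudodifferential operators. So the ``sharpness'' comes from Hilbert--Schmidt, not from a weak-$\mathcal{L}^1$ lower bound.
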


To our knowledge the if and only if statement in Theorem \ref{thm:result2} is new, although it is close
in spirit to the Hausdorff-Young, Cwikel and Birman-Solomjak estimates in (\cite{S}, \S 4).  As mentioned, the equalities were claimed as part of (\cite{GBVF}, Cor 7.22).
The proof of Theorem \ref{thm:result2} is in Section \ref{sec:5}.  It is more difficult to prove than Corollary \ref{cor:result1} as the condition $M_f T_\Delta \in \mathcal{L}^{1,\infty}$, for the \emph{unbounded} closable operator $M_f$, $f \in L^2(M)$, is non-trivial.  

With the if and only if statement, there exist
$f \in L^p(M)$, $1 \leq p < 2$, such that $M_fT_\Delta$
does not belong to the domain of any Dixmier trace.  To explore
any further identification between the Lebesgue integral
and an algebraic expression involving the Dixmier trace,
we considered the symmetrisation $T_\Delta^{1/2}M_fT_\Delta^{1/2}$ in the place of $M_fT_\Delta$.

For a compact linear operator $A > 0$, set
$\sqw{B}{A} := \sqrt{A}B\sqrt{A}$ for all linear operators $B$ such that $\sqw{B}{A}$ is densely defined and has bounded closure.  There are two situations when one uses the symmetrised expression $\sqrt{A}B\sqrt{A}$ instead of the product $AB$.  When $A \notin \mathcal{L}^{1,\infty}$
(as occurs in non-compact forms of noncommutative geometry), it is sometimes easier to obtain $\sqw{B}{A} \in \mathcal{L}^{1,\infty}$ than $BA \in \mathcal{L}^{1,\infty}$, see for example 
(\cite{GIV} \S 4.3).  A different use occurs when $B$ is unbounded, as
formulas such as $\Tr(\sqw{B}{A})$ may hold where $\Tr(AB)$ does not,
(\cite{Bik}, p.~163).  Our use is similar to the latter situation.

\begin{thm} \label{thm:1b2} Let $M$, $\Delta$, $T_\Delta$ be as in Corollary \ref{cor:result1}.  Then, $\sqw{M_f}{T_{\Delta}^s} = T_\Delta^{s/2} M_f T_\Delta^{s/2} \in \mathcal{L}^{1}(L^2(M))$ for all $s > 1$
if and only if $f \in L^1(M)$.  Moreover, setting
$$
\psi_\bl(M_f) := \bl \left( \frac{1}{k} \Tr(\sqw{M_f}{T_{\Delta}^{1+\frac{1}{k}}}) \right)
$$
for any $\bl \in BL$,
$$
\psi_\bl(M_f) := \lim_{k \to \infty} \frac{1}{k}
\Tr(\sqw{M_f}{T_{\Delta}^{1+\frac{1}{k}}}) = c \int_{M} f(x) |\mathrm{vol}|(x) \ , \ \fa f \in L^1(M)
$$
for a constant $c > 0$ independent of $\bl \in BL$.
\end{thm}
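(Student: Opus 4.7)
The plan is to reduce to nonnegative $f$, rewrite the trace as an integral of $f$ against the diagonal kernel of $T_\Delta^s = (1+\Delta)^{-sn/2}$, extract the simple pole of this diagonal kernel at $s=1$ uniformly in the base point, and then invoke dominated convergence to arrive at $c\int_M f\,|\mathrm{vol}|$.

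I first reduce to $f\geq 0$ by the decomposition $f=(f_1^+-f_1^-)+i(f_2^+-f_2^-)$. For $f\geq 0$ and $s>1$, the symmetrised operator is interpreted via the positive quadratic form $Q[h]:=\int_M f|T_\Delta^{s/2}h|^2|\mathrm{vol}|$, which is bounded on $L^2(M)$ because $T_\Delta^{s/2}h\in C(M)$ by Sobolev embedding (the embedding holds precisely for $s>1$). If $\{h_m\}$ is an orthonormal eigenbasis with $\Delta h_m=\lambda_m h_m$, Tonelli's theorem yields
\begin{equation*}
\Tr\bigl(T_\Delta^{s/2}M_fT_\Delta^{s/2}\bigr)=\sum_m(1+\lambda_m)^{-sn/2}\int_M f|h_m|^2|\mathrm{vol}|=\int_M f(x)\,K_s(x,x)\,|\mathrm{vol}|(x),
\end{equation*}
where $K_s(x,x):=\sum_m(1+\lambda_m)^{-sn/2}|h_m(x)|^2$ is the diagonal of the smoothing kernel of $T_\Delta^s$. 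For $s>1$ this series is continuous on $M$ and bounded below by $\mathrm{vol}(M)^{-1}$ via the zero-eigenvalue constant eigenfunction, so the trace is finite iff $f\in L^1(M)$, which supplies the iff statement.

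The key analytic input is the Minakshisundaram-Pleijel diagonal heat-kernel expansion $e^{-t\Delta}(x,x)=(4\pi t)^{-n/2}+O(t^{-n/2+1})$ as $t\to 0^+$, uniform in $x\in M$. Substituting this into the Mellin representation
\begin{equation*}
K_s(x,x)=\frac{1}{\Gamma(sn/2)}\int_0^\infty t^{sn/2-1}e^{-t}e^{-t\Delta}(x,x)\,dt
\end{equation*}
and splitting the $t$-integral at a small cut-off yields the uniform-in-$x$ limit $\lim_{s\to 1^+}(s-1)K_s(x,x)=c$ for an explicit positive constant $c$, together with a uniform bound $(s-1)K_s(x,x)\leq C$ on $s\in(1,2]$, $x\in M$. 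Dominated convergence with the majorant $Cf\in L^1(M)$ delivers $\lim_{s\to 1^+}(s-1)\Tr(T_\Delta^{s/2}M_fT_\Delta^{s/2})=c\int_M f|\mathrm{vol}|$. Specialising to $s=1+1/k$ produces the advertised ordinary limit; because any Banach limit agrees with the ordinary limit on convergent sequences, $\psi_\bl(M_f)=c\int_M f|\mathrm{vol}|$ independently of $\bl\in BL$.

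The principal obstacle is the \emph{uniformity in $x\in M$} of the local zeta-function pole: pointwise information at a single $x$ is classical, but to apply dominated convergence against an arbitrary $L^1$-density one needs short-time heat-kernel diagonal asymptotics that are uniform in $x$ on a general compact Riemannian manifold, or equivalently a uniform symbol calculus for $(1+\Delta)^{-sn/2}$ as $s$ varies in a right-neighbourhood of $1$. Once this uniformity is secured, the remainder is routine measure theory combined with Corollary~\ref{cor:resA}.
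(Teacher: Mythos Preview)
Your proof is correct and arrives at the same conclusion as the paper, but by a genuinely different route. Both arguments hinge on controlling the diagonal kernel $K_s(x,x)=\sum_m(1+\lambda_m)^{-sn/2}|h_m(x)|^2$ uniformly in $x$ and $s$: the paper (in Example~\ref{ex:compact_M}) obtains the uniform upper bound $(s-1)K_s(x,x)\le C$ via a local Fourier decomposition and symbol estimates for the order-$(-ns)$ operator $T_\Delta^s$, whereas you obtain both the uniform bound and the pointwise limit $(s-1)K_s(x,x)\to c$ directly from the Minakshisundaram--Pleijel heat-kernel expansion through the Mellin representation. For the limiting statement the paper does \emph{not} compute the pointwise limit of $(s-1)K_s(x,x)$ at all; instead it approximates $f\in L^1(M)$ by $f_n\in L^\infty(M)$, uses the uniform bound to control $(s-1)\Tr(T_\Delta^{s/2}M_{|f-f_n|}T_\Delta^{s/2})\le C\|f-f_n\|_1$, and for bounded $f_n$ invokes the cyclic identity $\Tr(T_\Delta^{s/2}M_{f_n}T_\Delta^{s/2})=\Tr(M_{f_n}T_\Delta^s)$ together with the already-established $L^\infty$ result (which in turn rests on Connes' Trace Theorem). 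Your argument is therefore more self-contained for this particular theorem---it bypasses Connes' Trace Theorem entirely and exhibits $c$ explicitly as the residue of the local spectral zeta function---at the price of importing the uniform short-time heat asymptotics, which is indeed classical on a closed manifold. The paper's approach, by contrast, reuses the pseudodifferential machinery already set up in Section~\ref{sec:tech} and Example~\ref{ex:compact_M}. Your acknowledged ``principal obstacle'' (uniformity in $x$) is exactly the same hurdle the paper clears by its symbol-calculus bound; once either device is in hand, the rest is routine.
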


Thus $\psi_{\bl}$, as the residue of the zeta function
$\Tr(T_\Delta^{s/2} M_f T_\Delta^{s/2})$ at $s=1$, is the value of the Lebesgue integral of the integrable function $f$ on $M$.
This is the most general form of the identification between the
Lebesgue integral and an algebraic expression involving
$M_f$, the compact operator $(1+\Delta^2)^{-1}$ and a trace.

The claim of (\cite{GBVF}, Cor 7.22), which must use the symmetrised expression
for $f \in L^p(M)$, $1 \leq p < 2$, would be that $\Tr_\omega(\sqw{M_f}{T_\Delta}) = \Tr_\omega(T_\Delta^{1/2} M_f T_\Delta^{1/2})$
is also the Lebesgue integral of any integrable function.
Surprisingly, using an example on the flat torus,
we show this is false.

\begin{lemma} \label{lemma:counterex}
Let $\Delta$ be the Hodge Laplacian on the flat 1-torus $\TT$ and $T_\Delta = (1+\Delta)^{-1/2} \in \mathcal{L}^{1,\infty}(L^2(\TT))$.
There is a positive function $f \in L^1(\TT)$
such that the operator $T_\Delta^{1/2} M_f T_\Delta^{1/2}$ is not Hilbert-Schmidt.
\end{lemma}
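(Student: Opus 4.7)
The strategy is to compute the Hilbert--Schmidt norm of $\sqw{M_f}{T_\Delta}$ in the Fourier basis of $L^2(\TT)$ and exhibit a positive integrable $f$ for which this norm diverges.

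In the standard orthonormal basis $h_m(x)=e^{imx}/\sqrt{2\pi}$, $m\in\ZZ$, one has $\Delta h_m=m^2h_m$ and $\langle h_m,M_fh_n\rangle=\frac{1}{2\pi}\hat f(m-n)$, which is well-defined for every $f\in L^1(\TT)$. Summing the squared matrix elements gives
\begin{equation*}
\|\sqw{M_f}{T_\Delta}\|_2^{\,2}\;=\;\frac{1}{4\pi^2}\sum_{k\in\ZZ}|\hat f(k)|^2\,S(k),\qquad S(k):=\sum_{n\in\ZZ}\frac{1}{\sqrt{(1+(n+k)^2)(1+n^2)}},
\end{equation*}
with the understanding that the left-hand side is infinite whenever the right-hand side is. Restricting the sum defining $S(k)$, $k\geq 2$, to $-(k-1)\leq n\leq -1$, using the crude inequality $\sqrt{(1+n^2)(1+(k-|n|)^2)}\leq 2|n|(k-|n|)$, together with the partial-fraction identity $\frac{1}{m(k-m)}=\frac{1}{k}\bigl(\frac{1}{m}+\frac{1}{k-m}\bigr)$, one obtains
\begin{equation*}
S(k)\;\geq\;\frac{H_{k-1}}{k}\;\geq\;\frac{c\log|k|}{|k|},\qquad |k|\geq 2,
\end{equation*}
for an absolute constant $c>0$ (and $S(-k)=S(k)$).

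For each integer $N\geq 2$ set $f_N(x):=N\,\chi_{[0,2\pi/N]}(x)$, so $f_N\geq 0$, $\|f_N\|_1=2\pi$, and $\hat f_N(k)=\frac{\sin(\pi k/N)}{\pi k/N}\,e^{-i\pi k/N}$. On $0<\pi k/N\leq\pi/2$ the elementary bound $\sin\theta\geq(2/\pi)\theta$ yields $|\hat f_N(k)|\geq 2/\pi$ for $1\leq|k|\leq N/2$. Combined with the lower bound on $S(k)$, this gives a constant $C>0$, independent of $N$, with
\begin{equation*}
\|\sqw{M_{f_N}}{T_\Delta}\|_2^{\,2}\;\geq\;C\!\!\sum_{2\leq|k|\leq N/2}\!\!\frac{\log|k|}{|k|}\;\geq\;C(\log N)^2 .
\end{equation*}

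Finally, choose $N_j:=\lceil e^{j^3}\rceil$ and $a_j:=j^{-2}$, and define $f:=\sum_{j=1}^{\infty}a_j f_{N_j}$. Then $f\geq 0$ and $\|f\|_1\leq 2\pi\sum a_j<\infty$, so $f\in L^1(\TT)$. Since $0\leq a_j f_{N_j}\leq f$ pointwise, the associated non-negative closed quadratic forms $Q_g(h):=\int g\,|T_\Delta^{1/2}h|^2$ satisfy $a_j Q_{f_{N_j}}\leq Q_f$ on $L^2(\TT)$. By Kato's form representation theorem, $Q_f$ is the form of a unique positive self-adjoint operator, which we identify with $\sqw{M_f}{T_\Delta}$; it dominates each bounded positive operator $a_j\sqw{M_{f_{N_j}}}{T_\Delta}$ in form order. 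The min--max characterisation of singular values then gives $\|\sqw{M_f}{T_\Delta}\|_2\geq a_j\|\sqw{M_{f_{N_j}}}{T_\Delta}\|_2\geq\sqrt{C}\,a_j\log N_j=\sqrt{C}\,j\to\infty$, so $\sqw{M_f}{T_\Delta}$ cannot be Hilbert--Schmidt. The main obstacle is the logarithmic lower bound on $S(k)$; the only subtlety in the last step is that $M_f$ is an unbounded multiplication operator, which is handled by interpreting the sandwiched expression via the closed form $Q_f$.
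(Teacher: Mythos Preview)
Your argument is correct and takes a genuinely different route from the paper's. The paper chooses one explicit function, $f(t)=\bigl(|t|\,|\log|t||^{1+\epsilon}\bigr)^{-1}$, and works with the \emph{companion} operator $T=M_{\sqrt f}\,T_\Delta\,M_{\sqrt f}$: it picks an orthonormal system of normalised characteristic functions of dyadic shells, estimates each Fourier coefficient $\langle\sqrt f\,e_k,h_n\rangle$ from below for $|k|\lesssim 2^n$, and shows $\langle Th_n,h_n\rangle\ge c/n^{\epsilon}$, so $\sum_n|\langle Th_n,h_n\rangle|^2=\infty$ and $T$ is not Hilbert--Schmidt; the equivalence of singular values for $\sqrt A\,B\,\sqrt A$ and $\sqrt B\,A\,\sqrt B$ then transfers the conclusion to $T_\Delta^{1/2}M_fT_\Delta^{1/2}$.

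You instead stay entirely on the Fourier side of $T_\Delta^{1/2}M_fT_\Delta^{1/2}$: the Hilbert--Schmidt norm factorises as $\sum_k|\hat f(k)|^2S(k)$ with the convolution kernel $S(k)\gtrsim|k|^{-1}\log|k|$, and you build $f$ by superposing $L^1$-normalised narrow bumps so that on each scale enough Fourier coefficients stay bounded below. The form-domination step to pass from $a_jf_{N_j}\le f$ to the Schatten inequality is clean and handles the unboundedness of $M_f$ correctly (if the form $Q_f$ were bounded one gets the operator inequality and min--max; if not, the conclusion is immediate). The paper's approach yields a single natural explicit example right at the edge of $L^1$; your approach makes transparent exactly which quantitative feature (the logarithmic growth of $S(k)$ versus the merely $\ell^\infty$ control $|\hat f|\le\|f\|_1$) causes the failure, at the cost of a less explicit $f$. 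One cosmetic point: with your normalisation $\langle h_m,M_fh_n\rangle=\hat f(m-n)/(2\pi)$ one actually has $\hat f_N(k)=2\pi\,\frac{\sin(\pi k/N)}{\pi k/N}\,e^{-i\pi k/N}$; the missing $2\pi$ is absorbed into your constants and does not affect the argument.
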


This result is proven as Lemma~\ref{lemma:6.3} in Section \ref{sec:6}.  It says, in particular,
there exists $f \in L^1(\TT)$ such that $\overline{\phi}_{\omega}(M_f) = \infty \not= c \int_{\TT} f(x) dx$.
Our last result, proven as Theorem~\ref{TCLonePlusEpsilon},
shows this failure, at least for flat torii, is pointed
at $L^1$.

\begin{thm} \label{thm:1b3} Let $\Delta$ be the Hodge Laplacian on the flat n-torus $\TT^n$ and $T_\Delta = (1+\Delta)^{-n/2} \in \mathcal{L}^{1,\infty}(L^2(\TT^n))$.  If $f \in L^{1+\epsilon}(\TT^n)$,  $\epsilon > 0$, then $\sqw{M_f}{T_{\Delta}} = T_\Delta^{1/2} M_f
  T_\Delta^{1/2} \in \mathcal{L}^{1,\infty}(L^2(\TT^n))$.  Moreover,
$$
\overline{\phi}_{\omega}(M_f) := \Tr_{\omega}(\sqw{M_f}{T_{\Delta}}) = c \int_{\TT^n} f(\mathbf{x}) d^n \mathbf{x} \ , \ \fa f \in L^{1+\epsilon}(\TT^n)
$$
for a constant $c > 0$ independent of $\omega \in DL_2$.
\end{thm}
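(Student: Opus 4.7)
The plan is, first, to prove that $\sqw{M_f}{T_\Delta} \in \mathcal{L}^{1,\infty}(L^2(\TT^n))$ for every $f \in L^{1+\epsilon}(\TT^n)$, with a quantitative bound $\|\sqw{M_f}{T_\Delta}\|_{1,\infty} \leq C_\epsilon \|f\|_{L^{1+\epsilon}}$, and second, to extend the Lebesgue identification from Corollary~\ref{cor:result1}, which handles bounded $f$, to all of $L^{1+\epsilon}$ by truncation and continuity of $\Tr_\omega$ on $\mathcal{L}^{1,\infty}$.

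For the membership step, split $f$ into real/imaginary and positive/negative parts so as to reduce to $f \geq 0$. Setting $g := f^{1/2} \in L^{2(1+\epsilon)}(\TT^n)$, write
\[
\sqw{M_f}{T_\Delta} = (M_g T_\Delta^{1/2})^*(M_g T_\Delta^{1/2}),
\]
so it is enough to establish $M_g T_\Delta^{1/2} \in \mathcal{L}^{2,\infty}$ with $\|M_g T_\Delta^{1/2}\|_{2,\infty} \leq C_\epsilon \|g\|_{L^{2(1+\epsilon)}}$. This is the generalised Cwikel/Birman-Solomjak estimate packaged in Corollary~\ref{cor:2.3}; squaring delivers the claimed bound on $\sqw{M_f}{T_\Delta}$ and, via $\|g\|_{L^{2(1+\epsilon)}}^2 = \|f\|_{L^{1+\epsilon}}$, the linear control in $\|f\|_{L^{1+\epsilon}}$.

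For the trace identity, truncate $f_N := f \cdot \chi_{\{|f| \leq N\}} \in L^\infty(\TT^n)$. Cyclicity of $\Tr_\omega$ on $\mathcal{L}^{1,\infty}$ (applicable since $M_{f_N}$, $T_\Delta^{1/2}$ are bounded and $T_\Delta \in \mathcal{L}^{1,\infty}$) gives $\Tr_\omega(\sqw{M_{f_N}}{T_\Delta}) = \Tr_\omega(M_{f_N} T_\Delta)$, which Corollary~\ref{cor:result1} evaluates to $c \int_{\TT^n} f_N(\mathbf{x})\,d^n\mathbf{x}$ with $c$ independent of $\omega \in DL_2$. Dominated convergence gives $\|f - f_N\|_{L^{1+\epsilon}} \to 0$; the membership step then forces $\|\sqw{M_{f-f_N}}{T_\Delta}\|_{1,\infty} \to 0$, and continuity of $\Tr_\omega$ on $\mathcal{L}^{1,\infty}$ yields $\Tr_\omega(\sqw{M_{f-f_N}}{T_\Delta}) \to 0$. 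Combined with $\int f_N \to \int f$ this delivers the identity, with the same $c$ and the same $\omega$-independence.

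The main obstacle is extracting the correct weak-Schatten form of the Cwikel/Birman-Solomjak bound: one needs $\mathcal{L}^{2,\infty}$-control on $M_g T_\Delta^{1/2}$ for $g \in L^{2(1+\epsilon)}(\TT^n)$, rather than the easier $\mathcal{L}^{2(1+\epsilon)}$-estimate, which after squaring lands only in $\mathcal{L}^{1+\epsilon} \not\subset \mathcal{L}^{1,\infty}$. The threshold $\epsilon > 0$ is sharp: Lemma~\ref{lemma:counterex} rules out $L^1$ in general and pins the failure of the Dixmier-trace identification precisely at the logarithmic boundary on which $\Tr_\omega$ lives.
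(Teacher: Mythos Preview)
Your reduction to $f \geq 0$ and the factorisation $\sqw{M_f}{T_\Delta} = (M_g T_\Delta^{1/2})^*(M_g T_\Delta^{1/2})$ with $g = f^{1/2}$ are fine, and your truncation argument for the trace identity matches the paper's. The gap is in the membership step: Corollary~\ref{cor:2.3} does \emph{not} supply the $\mathcal{L}^{2,\infty}$ bound you invoke. That corollary asserts $M_h G(D) \in \mathcal{L}^{1,\infty}$ when $G(D) \in \mathcal{L}^{1,\infty}$ and $h \in L^2(F,\mu_{1,\infty})$; it says nothing about $\mathcal{L}^{2,\infty}$, and $T_\Delta^{1/2}$ is not in $\mathcal{L}^{1,\infty}$. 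You yourself flag in the final paragraph that the off-the-shelf Cwikel estimate only places $M_g T_\Delta^{1/2}$ in $\mathcal{L}^{2(1+\epsilon),\infty}$, which after squaring lands in $\mathcal{L}^{1+\epsilon} \not\subset \mathcal{L}^{1,\infty}$ --- but you then provide no mechanism to close the gap. Since $M_g T_\Delta^{1/2} \in \mathcal{L}^{2,\infty}$ is, via your own squaring identity, equivalent to the theorem, the appeal to Corollary~\ref{cor:2.3} is circular: you have identified the hard step and relabelled it, not proved it.

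The paper avoids the $\mathcal{L}^{2,\infty}$ route altogether and works directly with $\|G(\Delta)^{1/2} M_f G(\Delta)^{1/2}\|_s$ for $s>1$ close to $1$. The ingredients are: bilinear complex interpolation giving $\|M_h G\|_p \leq \|h\|_p \|G\|_p$ for $2 \leq p \leq \infty$, combined with the $L^2$-based estimate $\|M_h G\|_p \leq \|h\|_2 \|G\|_p$ for $1 < p \leq 2$ (from the proof of Corollary~\ref{cor:2.3}); a factorisation $f = f_1 f_2$ with $f_1 \in L^{2+\epsilon_1}$ and $f_2 \in L^2$, so that H\"older splits $\|G_1 M_f G_2\|_s \leq \|M_{f_1}G_1\|_{s_1}\|M_{f_2}G_2\|_{s_2}$ with $G_j = G(\Delta)^{s/s_j}$, one exponent above $2$ and one below; a Heinz-type three-line inequality (Lemma~\ref{IIIlineLemma}) to pass from the asymmetric $G_1 M_f G_2$ to the symmetric $G^{1/2} M_f G^{1/2}$; and finally $\limsup_{s\to 1^+}(s-1)$ to obtain a $\|\cdot\|_{Z_1}$ bound, hence $\mathcal{L}^{1,\infty}$ membership by \cite{CRSS}. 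The asymmetric splitting of powers is precisely what converts the extra $\epsilon$ of integrability into the required ideal estimate; there is no single Cwikel inequality that does this in one stroke.
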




\subsection{Preliminaries on Joint Spectral Representations} \label{sec:prelim_sp}

\medskip Let $\mathcal{M}=\left< A_1,\ldots A_n \right>$ denote the von Neumann algebra generated by a finite set of selfadjoint commuting bounded operators $A_1,\ldots,A_n$ acting non-degenerately on $H$, i.e.~the weak closure of polynomials in $A_1, \ldots, A_n$.  Let $E$ denote the joint spectrum of $A_1,\ldots,A_n$.  Following (\cite{Ped} Thm 3.4.4),
let $\{ \eta_j \}_{j=1}^N$ be a maximal family of unit vectors in $H$ with $\overline{\mathcal{M}\eta_j} \cap \overline{\mathcal{M}\eta_k} = \{ 0 \}$, $j \not= k \in \{1,\ldots,N\}$, and $\oplus_{j=1}^N \overline{\mathcal{M} \eta_j} = H$.  Here $N$ may take the value $N = \infty$.  Define $\eta = \sum_{j=1}^N 2^{-j} \eta_j$ and $l_\eta(f) := \inprod{\eta}{f(A_1,\ldots,A_n)\eta}$ for all $f \in C(E)$.  From the Riesz-Markov Theorem (\cite{RS}, Thm IV.18 p.~111), $l_\eta$ is associated to a finite regular Borel measure $\mu_\eta$ and, as $\eta$ is cyclic for $\mathcal{M}$ on $\overline{\mathcal{M} \eta}$, $\mathcal{M} \cong L^\infty(E,\mu_\eta)$ (\cite{Ped}, Prop 3.4.3).
Without loss we may write $f(A_1,\ldots,A_n)$, $f \in L^\infty(E,\mu_\eta)$,
to denote an element of $\mathcal{M}$.  This description contains the continuous functional
calculus, $C(E) \subset L^\infty(E,\mu_\eta)$, and the bounded Borel functional
calculus $B(E) \subset L^\infty(E,\mu_\eta)$.

Now, let $U : H \to L^2(F,\mu)$ be a joint spectral representation
of $A_1,\ldots,A_n$ (\cite{RS}, p.~246) with $UA_iU^* = M_{e_i}$, $i =1,\ldots,n$,  for bounded
functions $e_i$ on $F$. Without loss, see (\cite{RS}, p.~227), we can take $F = \oplus_{j=1}^N \RR$ and
$$\mu(\oplus_{j=1}^N J_j) := \sum_{j=1}^N 2^{-j} \inprod{\eta_j}{\chi_{J_j}(A_1,\ldots,A_n) \eta_j},
$$
where $\chi_{J_j}$ is the characteristic function of $J_j \subset \RR$.
Define the mapping $e : F \to E$ by $x \mapsto (e_1(x),\ldots,e_n(x))$.  
It is immediate for $f \in B(E)$ that $Uf(A_1,\ldots,A_n)U^*  = M_{f \circ e}$ where
$f \circ e \in L^\infty(F,\mu)$.  It is not so immediate when $f \in L^\infty(E,\mu_\eta)$.  
We say $e$ is measure preserving if $\mu_\eta(e(J)) = 0 \Rightarrow \mu(J)=0$, $J$ a Borel subset of $F$.

\begin{prop}
Let $e$ be measure preserving.  Then $\cdot \circ e : L^\infty(E,\mu_\eta) \to L^\infty(F,\mu)$ is a normal $^*$-homomorphism. 
\end{prop}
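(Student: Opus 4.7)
The plan has three stages: verify that $f \mapsto f \circ e$ descends to a well-defined map $L^\infty(E,\mu_\eta) \to L^\infty(F,\mu)$; observe the $^*$-homomorphism properties pointwise; and then invoke the intertwining identity $Uf(A_1,\ldots,A_n)U^* = M_{f\circ e}$ on bounded Borel $f$ already established above to recover normality.

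For well-definedness, let $f_1,f_2 \in B(E)$ be two bounded Borel representatives of the same class in $L^\infty(E,\mu_\eta)$, so that $A := \{x \in E : f_1(x) \neq f_2(x)\}$ is Borel with $\mu_\eta(A) = 0$. Setting $J := e^{-1}(A) \subseteq F$, which is Borel because $e$ is, we have $e(J) \subseteq A$, hence $\mu_\eta(e(J)) \leq \mu_\eta(A) = 0$, and the measure-preserving hypothesis delivers $\mu(J) = 0$. Since $\{y \in F : (f_1 \circ e)(y) \neq (f_2 \circ e)(y)\} \subseteq J$, we conclude $f_1 \circ e = f_2 \circ e$ in $L^\infty(F,\mu)$. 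Applying the same trick to $A_M := \{x \in E : |f(x)| > \|f\|_{L^\infty(E,\mu_\eta)}\}$ yields the norm estimate $\|f \circ e\|_{L^\infty(F,\mu)} \leq \|f\|_{L^\infty(E,\mu_\eta)}$. The pointwise identities $(f+g)\circ e = f\circ e + g\circ e$, $(fg)\circ e = (f\circ e)(g\circ e)$ and $\bar{f}\circ e = \overline{f\circ e}$ hold for Borel representatives and descend to the $L^\infty$-classes, exhibiting $\cdot \circ e$ as a $^*$-homomorphism.

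Well-definedness in hand, the intertwining identity extends from bounded Borel $f$ to $f \in L^\infty(E,\mu_\eta)$ unchanged, since both sides now depend only on the $\mu_\eta$-class of $f$. This factors $\cdot \circ e$ as the composition of the normal $^*$-isomorphism $L^\infty(E,\mu_\eta) \cong \mathcal{M}$ of the bounded Borel functional calculus, the normal $^*$-isomorphism $A \mapsto UAU^*$ on $\mathcal{M}$, and the inclusion of $U\mathcal{M}U^*$ into $L^\infty(F,\mu)$ viewed as multiplication operators on $L^2(F,\mu)$; normality of the composition follows. The main obstacle is precisely the well-definedness step: the measure-preserving hypothesis is phrased in terms of the forward image $e(J)$, which for Borel $J$ is in general only analytic, so $\mu_\eta(e(J))$ must be interpreted via outer measure or within the $\mu_\eta$-completion. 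The argument above is robust to this nuance because the hypothesis is only invoked when $e(J) \subseteq A$ for a Borel $\mu_\eta$-null $A$, so the outer measure vanishes automatically.
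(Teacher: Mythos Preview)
Your proof is correct and follows essentially the same approach as the paper's: first establish well-definedness of $[f]_{\mu_\eta}\mapsto[f\circ e]_\mu$ using the measure-preserving hypothesis, then obtain normality by factoring $\cdot\circ e$ as the composite $L^\infty(E,\mu_\eta)\xrightarrow{\pi_\eta^{-1}}\mathcal{M}\xrightarrow{U\cdot U^*}U\mathcal{M}U^*\xrightarrow{M^{-1}}L^\infty(F,\mu)$ of normal $^*$-isomorphisms. Your treatment is somewhat more explicit than the paper's (you spell out the set $J=e^{-1}(A)$, add the norm estimate and the pointwise $^*$-homomorphism identities, and flag the analytic-set subtlety of the forward image $e(J)$, which the paper passes over), but the underlying argument is the same.
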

\begin{proof}
Let $f \in [f]_{\mu_\eta}$ be a bounded function on $E$ representing
the equivalence class $[f]_{\mu_\eta} \in L^\infty(E,\mu_\eta)$.
Then $f \circ e(x)$ is a bounded function on $F$.
Take $g \in [f]_{\mu_\eta}$.
Now $(f - g)\circ e(J) \not= 0$ implies $\mu_\eta(e(J)) = 0$
which in turn implies $\mu(J) = 0$.
Hence $[f]_{\mu_\eta} \mapsto [f \circ e]_\mu$ is well defined.

Let $\pi_\eta^{-1}$ denote the $^*$-isomorphism
$L^\infty(E,\mu_\eta) \to \mathcal{M}$ and $M^{-1}$ denote the $^*$-isomorphism
$M_{[f]_\mu} \mapsto [f]_\mu$, $[f]_\mu \in L^\infty(F,\mu)$, see (\cite{Ped}, Prop 2.5.2).
As the map $U \cdot U^* : B(H) \to B(L^2(F,\mu))$ is strong-strong continuous,
$\cdot \circ e : [f]_{\mu_\eta} \mapsto M^{-1}(U\pi_\eta^{-1}([f]_{\mu_\eta})U^*)$
is a normal $^*$-homomorphism, (\cite{Ped}, \S 2.5.1).
\end{proof}

\begin{ex}
Suppose $\mathcal{M}$ has a cyclic vector $\eta \in H$.  Then $(E,\mu_\eta) \cong (F,\mu)$.
Recall that $\mathcal{M}$ has a cyclic vector for the separable Hilbert space $H$
if and only if $\mathcal{M}$ is maximally commutative (\cite{Ped}, Prop 2.8.3 p.~35).

As a particular example, take $A_i = M_{x_i}$ where $x_i$ are a finite number of co-ordinate functions for a compact Riemannian manifold $M$.  The function
$1 \in L^2(M)$ is a cyclic vector and $L^2(M)$ is a spectral representation
with $L^\infty(M) \cong \langle M_{x_i} \rangle$.  The function
$M \ni x \mapsto (x_1(x), \ldots, x_{np}(x)) \in \RR^{np}$ is measure preserving.  Here $n$ is the dimension of $M$ and $p$ the number of charts in a chosen atlas of $M$.
\end{ex}

\subsection{Dixmier Traces and Measures on the Joint Spectrum} \label{sec:2.4}

\medskip This section generalises the results for $L^\infty(M)$ and $\Delta$ to an arbitrary finitely generated commutative von Neumann algebra and positive operator $D^2$, where $D=D^*$ has compact resolvent, when certain conditions are met.  Besides providing succinct proofs for Section \ref{sec:2.5}, we feel the results of this section are of independent interest.

As in previous sections, let $H$ be a separable complex Hilbert space and $D=D^*$ have compact resolvent.  Let $\{ h_m \}_{m=1}^\infty \subset H$ be a complete orthonormal system of eigenvectors of $D$
and $G(D)h_m = G(\lambda_m)h_m$ for any positive bounded Borel function $G$ where $\lambda_m$ are the eigenvalues of $D$. Let $\mathcal{M}=\left< A_1,\ldots A_n \right>$ denote the von Neumann algebra generated by a finite set of selfadjoint commuting bounded operators $A_1,\ldots,A_n$ acting non-degenerately on $H$.
We assume -- see the preliminaries in Section \ref{sec:prelim_sp},

\begin{cond} \label{cond:1}
There is a normal $^*$-homomorphism $ \cdot \circ e : \mathcal{M} \cong L^\infty(E,\mu_\eta) \to L^\infty(F,\mu)$, where $E$ is the joint spectrum of $A_1,\ldots,A_n$ and $U : H \to L^2(F,\mu)$ is a
joint spectral representation.
\end{cond}

\begin{dfn} \label{dfn:spectral_meas}
Let $A_1, \ldots A_n$ be commuting bounded selfadjoint operators satisfying Condition \ref{cond:1}.   We say:
\begin{prop2list}{10}{0}{2}
\item $D$ is $(A_1,\ldots,A_n,U)$-\emph{dominated} if the modulus squared of the eigenfunctions
of $UDU^*$ are dominated by some $l \in L^1(F,\mu)$;
\item $G(D) \in \mathcal{L}^{1,\infty}$ is \emph{spectrally measurable} if, for all the projections $P \in U^*L^\infty(F,\mu)U$, $PG(D)P \in \mathcal{L}^{1,\infty}$
is measurable (in the sense of Connes).
\end{prop2list} 
\end{dfn}

Suppose $0 < G(D) \in \mathcal{L}^{1,\infty}$.  Then $0 < G(D)^{s} \in \mathcal{L}^1$, $\fa s > 1$, (\cite{CRSS} Thm 4.5(ii) p.~266).  By the formula (\ref{eq:tr})
\begin{equation} \label{eq:zeta_measures}
\zeta(A)(s) := \Tr(AG(D)^{s}) \ , \ A \in U^*L^\infty(F,\mu)U
\end{equation}
is a \emph{normal} positive linear functional on $U^*L^\infty(F,\mu)U \subset B(H)$ for any fixed $s > 1$.  Hence, for each $s > 1$,
there exists a Radon-Nikodym derivative
$v_s \in L^1(F,\mu)$ such that
$$
\zeta(f(A_1,\ldots,A_n))(s) = \int_F f \circ e(x) v_s(x) d\mu(x) \ , \ \fa f \in L^\infty(E,\mu_\eta) .
$$

\begin{thm} \label{thm:1b} Let $H$ be a separable Hilbert space
and $D=D^*$ have compact resolvent.  Let
$0 < G(D) \in \mathcal{L}^{1,\infty}$, $\omega \in DL_2$, and set
\begin{equation*}
\phi_{\omega}(\cdot) = \Tr_\omega( \cdot G(D)) .
\end{equation*}
Let $\{ A_1, \ldots, A_n \}$ be commuting
bounded selfadjoint operators acting non-degenerately on $H$ with joint spectral representation $U : H \to L^2(F,\mu)$ and joint spectrum $E$ such that $D$ is $(A_1,\ldots,A_n,U)$-dominated and
Condition \ref{cond:1} is satisfied.
Then
\begin{prop2list}{10}{0}{2}
\item $\phi_\omega \in \mathcal{M}_*$ and 
there exists $v_{G,\omega} \in L^1(F,\mu)$ such that 
$$
\phi_\omega(f(A_1,\ldots,A_n)) = \int_F f \circ e(x) v_{G,\omega}(x) d\mu(x) \ \ \fa f \in L^\infty(E,\mu_\eta),
$$
\item we have 
$$
\phi_{\omega}(f(A_1,\ldots,A_n)) = \int_F f \circ e(x) v(x) d\mu(x) \ \ \fa f \in L^\infty(E,\mu_\eta) ,
$$
where
$$
v = \lim_{k \to \infty} k^{-1} v_{1 + k^{-1}} \in L^1(F,\mu)
$$
if and only if $G(D)$ is spectrally measurable.  Here the limit is taken in the weak (Banach) topology $\sigma(L^1(F,\mu),L^\infty(F,\mu))$.
\end{prop2list}
\end{thm}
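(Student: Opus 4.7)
The plan is to construct $v_{G,\omega}$ as a weak-$L^1(F,\mu)$ object built from the Radon-Nikodym derivatives of the normal trace functionals $\Tr(\cdot\, G(D)^s)$ at $s > 1$, passed through the zeta-function representation of $\Tr_\omega$ afforded by Corollary \ref{cor:resA}. First I would observe that for each $s > 1$, since $G(D)^s \in \mathcal{L}^1$, the map $A \mapsto \Tr(A G(D)^s)$ is $\sigma$-weak continuous on $B(H)$, and using $U G(D)^s U^* = \sum_m G(\lambda_m)^s \lvert\tilde h_m\rangle\langle \tilde h_m\rvert$ with $\tilde h_m := U h_m$, substituting $A = U^* M_g U$ yields
\[
\Tr(A G(D)^s) = \int_F g(x) v_s(x)\, d\mu(x), \qquad v_s(x) = \sum_m G(\lambda_m)^s \lvert\tilde h_m(x)\rvert^2 .
\]

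The pivotal estimate comes from $(A_1,\ldots,A_n,U)$-domination $\lvert\tilde h_m\rvert^2 \leq l \in L^1(F,\mu)$, yielding $v_s \leq \Tr(G(D)^s)\cdot l$. Combined with the standard bound $\sup_{s>1}(s-1)\Tr(G(D)^s) < \infty$ for $G(D) \in \mathcal{L}^{1,\infty}$, the sequence $w_k := k^{-1} v_{1+1/k}$ satisfies $w_k \leq C l$ uniformly in $k$, so $\{w_k\}$ is uniformly integrable in $L^1(F,\mu)$. For $\omega = \mathcal{L}(\bl)$ with $\bl \in BL \cap DL$, Corollary \ref{cor:resA} gives
\[
\phi_\omega(U^* M_g U) = \bl\!\left( \left\{ \textstyle\int_F g\, w_k\, d\mu \right\}_k \right) ,
\]
and normality in $g$ follows since $g_\alpha \nearrow g$ boundedly forces $\int(g - g_\alpha) w_k\, d\mu \leq C \int(g - g_\alpha) l\, d\mu \to 0$ uniformly in $k$ (by DCT with dominant $2\|g\|_\infty l \in L^1$), so the $\ell^\infty$-sequence vanishes in $\ell^\infty$-norm and $\bl$ returns zero. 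Normality plus Riesz representation delivers $v_{G,\omega} \in L^1(F,\mu)$, and composition with the normal embedding from Condition \ref{cond:1} gives the stated formula on $\mathcal{M}$, establishing (i).

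For (ii), spectral measurability of $G(D)$ lifts via Corollary \ref{cor:resA} to measurability of $A G(D)$ for every $A \in U^* L^\infty(F,\mu) U \supseteq \mathcal{M}$, so the limit $\lim_k \int_F g\, w_k\, d\mu = \lim_k k^{-1}\Tr(A G(D)^{1+1/k})$ exists for every $g \in L^\infty(F,\mu)$. Uniform integrability of $\{w_k\}$ plus the Dunford-Pettis theorem then forces weak-$L^1$ convergence to a unique $v \in L^1(F,\mu)$, which realises $v_{G,\omega}$ independently of $\omega \in DL_2$. The converse is obtained by observing that if $v_{G,\omega}$ coincides with a common $v \in L^1$ for all $\omega$, then $\Tr_\omega(P G(D) P)$ is $\omega$-independent for every projection $P \in U^* L^\infty(F,\mu) U$, which is exactly spectral measurability.

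The principal obstacle is extending the Corollary \ref{cor:resA}-type zeta representation of $\Tr_\omega(A G(D))$ from $\omega = \mathcal{L}(\bl)$ to an arbitrary $\omega \in DL_2$ in part (i), since a general dilation-type invariant state need not factor through the averaging $\mathcal{L}$. I would bridge this either via a factorisation lemma valid for $DL_2$ states on sequences of the form $\gamma(G(D)^{1/2} A G(D)^{1/2})$, or more directly by exploiting the $D_2$-invariance of $\omega$ together with the uniform control $w_k \leq C l$ to reduce the normality question for $\phi_\omega$ on $\mathcal{M}$ to the vanishing of $\omega$ on an $\ell^\infty$-sequence whose $\ell^\infty$-norm is bounded by $C\int g_n l\, d\mu$; the latter tends to zero by DCT whenever $g_n \searrow 0$ boundedly, which is precisely what makes the domination hypothesis the essential input.
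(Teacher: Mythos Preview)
Your part (ii) is essentially the paper's argument: the forward direction is Lemma~\ref{cor:partconv} (spectral measurability plus domination $\Rightarrow$ weak-$L^1$ convergence of $k^{-1}v_{1+1/k}$; your Dunford--Pettis route is a valid alternative to the paper's direct Radon--Nikodym construction), and the converse is Lemma~\ref{thm:conv}. One wrinkle: your converse hypothesis should be ``the weak limit $v=\lim_k k^{-1}v_{1+1/k}$ exists'', not ``$v_{G,\omega}$ is $\omega$-independent''; from weak convergence you get $\lim_k k^{-1}\Tr(T_{\chi_J}G(D)^{1+1/k}T_{\chi_J})$ exists for every Borel $J$, and Theorem~\ref{thm:resPA} then gives measurability of $T_{\chi_J}G(D)T_{\chi_J}$.

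For part (i) you have correctly located the gap and even the right estimate ($w_k\le C\,l$), but your proposed fixes do not close it. The difficulty is that for a general $\omega\in DL_2$ there is no formula $\phi_\omega(T_g)=\omega(\{\int g\,w_k\,d\mu\}_k)$; the state $\omega$ acts on the Ces\`aro sequence $\gamma(T_g^{1/2}G(D)T_g^{1/2})$, not on the zeta sequence. The paper never represents $\phi_\omega$ via the zeta function for general $\omega$. Instead it defines $\nu_{G,\omega}(J):=\Tr_\omega(T_{\chi_J}G(D)T_{\chi_J})$ directly and proves countable additivity through the chain
\[
\Tr_\omega(P_N G(D)P_N)\;\le\;\|P_N G(D)P_N\|_0\;\le\;e\,\limsup_k k^{-1}\Tr\bigl((P_N G(D)P_N)^{1+1/k}\bigr)\;=\;e\,\limsup_k k^{-1}\Tr\bigl(P_N G(D)^{1+1/k}P_N\bigr),
\]
where the first inequality is $\sup_{\omega\in DL_2}\Tr_\omega=\|\cdot\|_0$ (\cite{LSS}), the second is $\|\cdot\|_0\le e\|\cdot\|_{Z_1}$ from (\cite{CRSS}, Thm~4.5), and the equality is (\cite{CPS}, Prop~3.6). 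The rightmost quantity is $e\,\limsup_k\int_{\cup_{j\ge N}F_j}w_k\,d\mu\le eC\,\mu_l(\cup_{j\ge N}F_j)\to 0$ by your domination bound. This is exactly what you intuited (``$\ell^\infty$-norm bounded by $C\int g_n\,l\,d\mu$''), but the mechanism is the Riesz seminorm $\|\cdot\|_0$ as a \emph{uniform} upper bound over all $\omega$, followed by the zeta bound on $\|\cdot\|_0$ --- not an application of $\omega$ to a sequence. Once $\nu_{G,\omega}$ is a measure, the integral formula for step functions extends to $L^\infty(F,\mu)$ by norm continuity of $\phi_\omega\in B(H)^*$, and Condition~\ref{cond:1} passes this to $L^\infty(E,\mu_\eta)$.
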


The proof of Theorem \ref{thm:1b} is in Section \ref{sec:4}.

\begin{rems}
Theorem \ref{thm:1b} has been presented in such a form as
to enable comparison with (\cite{CN}, \S IV Prop 15(b) p.~312).  In (\cite{CN}, \S IV Prop 15(b)) Connes associated the Dixmier
trace and the $C^\infty$-functional calculus of $A_1,\ldots,A_n$ to a measure on the joint spectrum.  Note
that the results of Theorem \ref{thm:1b} do not require Condition 1 if applied only to the bounded Borel functional calculus
of $A_1,\ldots,A_n$.  Condition 1 is required to identify $\mathcal{M}$ with a $L^\infty$-functional calculus.
\end{rems}

Theorem \ref{thm:1b} is, essentially, criteria for
$\phi_\omega \in \mathcal{M}_*$, i.e.~\emph{normality} of the functional $\phi_\omega$.  
Under these conditions the notion of noncommutative integral, Connes version, and notion of integral, Segal version, intersect.
It is therefore of interest to find examples where the criteria are satisfied, and $\phi_\omega$ is normal, and where the criteria fail and $\phi_\omega$ is not normal.

\section{Examples} \label{sec:ex}

\begin{ex} \label{ex:torus}
Let $\TT^n$ be the flat $n$-torus.  Let $U : L^2(\TT^n) \to L^2(\TT^n)$ be the trivial spectral representation of $L^\infty(\TT^n)$ (which is generated by the functions $e^{i\theta_j}$, $j=1,\ldots,n$).  Condition \ref{cond:1} is satisfied. Take the orthonormal basis
$h_{\mathbf{m}}(\mathbf{x}) = e^{i \mathbf{m} \cdot \mathbf{x}}$,
where $\mathbf{m} = (m_1,\ldots,m_n) \in \ZZ^n$ and $\mathbf{x} \in \TT^n$,
of eigenvectors of the Hodge Laplacian $\Delta$ on $\TT^n$.  Then
$|h_{\mathbf{m}}(\mathbf{x})|^2 = 1$ is dominated by $1 \in L^1(\TT^n)$.
The hypotheses of Theorem \ref{thm:1b} are satisfied.
\end{ex}

\begin{ex} \label{ex:counter1}
Take a selfadjoint operator $D$ on a separable Hilbert space $H$ with trivial kernel and compact resolvent such that $\nm{|D|^{-1}}_0 = \inf_{V \in \mathcal{L}^{1,\infty}_0} \nm{|D|^{-1} - V}_{1,\infty} = 1$.  For example $Dh_m = m h_m$ where $\{ h_m \}_{m=1}^\infty$ is an orthonormal basis of $H$.  Let $\mathcal{M}$ be the von Neumann algebra generated by $A_1 := |D|^{-1}$.  Clearly $[D,A_1] = 0$ and $\mathcal{M}$ contains the spectral projections of $D$.  Let $Q_{j}$ be the projection onto the $j^\mathrm{th}$-eigenvalue of $D$ and $Q'_N$ be the projection onto the first $N$ eigenvalues of $D$, where the eigenvalues are listed by increasing absolute value with repetition.  Then $P_N := \sum_{j=N}^\infty Q_j = 1 - Q'_N$, but $\liminf_N \nm{P_N |D|^{-1} P_N}_0 = \nm{|D|^{-1}}_0 > 0$.  By Proposition \ref{prop:suff}
below
$\phi_\omega( \cdot) := \Tr_\omega( \cdot |D|^{-1})$ is not normal for $\mathcal{M}$.
The hypotheses of Theorem \ref{thm:1b} cannot be fulfilled. 
Indeed, $U : H \to \ell^2$ given by $h_m \mapsto e_m := (\ldots, 0,1,0\ldots$), $1$ is in the $m^{\mathrm{th}}$-place, is the spectral representation of $A_1$ up to unitary equivalence.  Clearly the collection $\{ e_m \}$ cannot be dominated by any $l \in \ell^1$.
\end{ex}


\section{Technical Results} \label{sec:tech}

We establish notation that will remain in force for the rest of the document.  Thus,
$H$ denotes a separable complex Hilbert space and $D=D^*$ a selfadjoint operator with compact resolvent, $\{ h_m \}_{m=1}^\infty \subset H$ will denote an orthonormal basis
of eigenvectors of $D$ and $Dh_m = \lambda_mh_m$ the eigenvalues of $D$,
$G$ will denote a positive bounded Borel function such that $0 < G(D) \in \mathcal{L}^{1,\infty}$, $A_1,\ldots,A_n$ will denote a finite set of selfadjoint commuting bounded operators acting non-degenerately on $H$, and
$\mathcal{M}=\left< A_1,\ldots A_n \right>$ will denote the von Neumann algebra generated
by $A_1,\ldots,A_n$.

Condition \ref{cond:1} is assumed.
Without exception $U$ will denote the unitary $U : H \to L^2(F,\mu)$
such that $Uf(A_1,\ldots,A_n)U^* = M_{f \circ e}$ for all $f \in L^\infty(E,\mu_\eta)$, see Condition \ref{cond:1}.  Conversely, we
identify $T_f  := U^* M_f U \in B(H)$ for $f \in L^\infty(F,\mu)$.
Without exception, $(E,\mu_\eta)$ and $(F,\mu)$ will denote the respective measure spaces.

\subsection{Summability for Unbounded Functions} \label{sec:2.1}

\medskip Let $g : \RR \to \CC$ be a bounded Borel function. Set
\begin{equation} \label{eq:2.2}
\mathcal{F}_D(g)(x) := \sum_{m} g(\lambda_m) |(Uh_m)(x)|^2 .
\end{equation}
If $g(D) \in \mathcal{L}^1(H)$, the partial sums are Cauchy and convergence in the $L^1$-sense,
\begin{eqnarray*}
\int_{F} \left| \sum_{m=N}^M g(\lambda_m) |(Uh_m)(x)|^2 \right| d\mu(x)
& \leq & \sum_{m=N}^M |g(\lambda_m)| \int_F |(Uh_m)(x)|^2 d\mu(x) \\
& = & \sum_{m=N}^M |g(\lambda_m)| .
\end{eqnarray*}
Hence $\mathcal{F}_D(g) \in L^1(F,\mu)$ and $\nm{\mathcal{F}_D(|g|)}_1 = \nm{g(D)}_1$.
Let $\mu_g \abc \mu$ denote the (complex)
measure with Radon-Nikodym derivative $\mathcal{F}_D(g)$.
If $g(D) \in \mathcal{L}^s$ for $s \geq 1$, set $\mu_{s}$ to be the measure
with Radon-Nikodym derivative $\mathcal{F}_D(|g|^s)$.
If $g > 0$, $\mu_g \equiv \mu_1$.
In this section we relate summability of $T_fg(D)$ to the measures $\mu_g$ and $\mu_s$, $s \geq 1$.

\begin{lemma} \label{lemma:2.0}
Let $\{ f_n \}_{n=1}^\infty \subset L^\infty(F,\mu)$.
Suppose $f_n \to f$ pointwise $\mu$-a.e. such that $|f_n| \nearrow |f|$ and $\nm{f_n h}_2 \leq K$, $K > 0$, for $h \in L^2(F,\mu)$.
Then $\nm{f h}_2 \leq K$.
\end{lemma}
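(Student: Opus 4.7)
The plan is to reduce this to a direct application of the Monotone Convergence Theorem on the squared pointwise magnitudes. The hypothesis $|f_n| \nearrow |f|$ pointwise $\mu$-a.e., together with the non-negativity of $|h(x)|^2$, gives the key monotonicity
$$
|f_n(x)h(x)|^2 = |f_n(x)|^2 |h(x)|^2 \;\nearrow\; |f(x)|^2 |h(x)|^2 = |f(x)h(x)|^2
$$
for $\mu$-a.e.\ $x \in F$. Note that $f$ need not be essentially bounded, but we do not need that: the function $fh$ is only required to make sense as a (possibly extended-real-valued) measurable function, which it does since $f$ is the pointwise a.e.\ limit of the measurable functions $f_n$.

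Next I would apply the Monotone Convergence Theorem to the non-negative, monotonically increasing sequence $\{|f_n h|^2\}_{n=1}^\infty$, obtaining
$$
\lim_{n \to \infty} \int_F |f_n(x)h(x)|^2\, d\mu(x) = \int_F |f(x)h(x)|^2\, d\mu(x),
$$
where both sides may a priori take the value $+\infty$. By hypothesis, $\nm{f_n h}_2^2 \leq K^2$ for every $n$, so the left-hand side is bounded above by $K^2$. Hence the right-hand side is finite and bounded by $K^2$, which gives $fh \in L^2(F,\mu)$ and $\nm{fh}_2 \leq K$, as required.

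There is essentially no obstacle: the only subtlety is verifying that the a.e.\ pointwise convergence $f_n \to f$ combined with $|f_n|\nearrow|f|$ is enough to guarantee the pointwise monotone increase of $|f_n h|^2$ to $|fh|^2$ on a set of full measure, which is immediate since the null sets where either hypothesis fails have a null union. No measure-theoretic hypothesis on $F$ beyond $\sigma$-finiteness (already implicit in the setup, since $\mu$ comes from the spectral representation of Section \ref{sec:prelim_sp}) is needed for the Monotone Convergence Theorem.
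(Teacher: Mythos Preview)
Your proof is correct and essentially identical to the paper's: both observe that $|f_n|^2|h|^2 \nearrow |f|^2|h|^2$ pointwise $\mu$-a.e.\ and then invoke a standard convergence theorem to bound $\int_F |fh|^2\,d\mu$ by $\sup_n \int_F |f_n h|^2\,d\mu \leq K^2$. The only cosmetic difference is that the paper cites Fatou's Lemma while you cite the Monotone Convergence Theorem; given the monotonicity hypothesis, these are interchangeable here (MCT actually gives equality of the limit with the integral, which is slightly sharper but not needed).
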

\begin{proof}
A simple application of Fatou's Lemma, since we obtain $\nm{f h}_2^2 = \int_F |f(x)|^2 |h(x)|^2 d\mu(x) \leq \sup_n \int_F |f_n(x)|^2|h(x)|^2 d\mu(x) \leq K^2$ from $|f_n|^2|h|^2 \nearrow |f|^2 |h|^2$ pointwise. 
\end{proof}

In the following Proposition and throughout the document,
the expression $T_fg(D)$ is bounded (or compact), where
$T_f$ is an unbounded closable
operator, refers
to the densely defined operator $T_fg(D)$ having bounded
(or compact) closure.

\begin{prop} \label{prop:2.1}
Let $g(D)$ be Hilbert-Schmidt.  Then $T_f g(D)$ is Hilbert-Schmidt if and only if $f \in L^2(F,\mu_2)$. 
\end{prop}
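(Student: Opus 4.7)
The plan is to establish the formal identity
\[
\sum_{m=1}^\infty \nm{T_f g(D) h_m}^2 \;=\; \int_F |f(x)|^2 \, \mathcal{F}_D(|g|^2)(x) \, d\mu(x) \;=\; \nm{f}_{L^2(F,\mu_2)}^2,
\]
from which both directions of the claim follow. Using $g(D)h_m = g(\lambda_m) h_m$ and $\nm{T_f h_m}^2 = \int_F |f|^2 |Uh_m|^2 d\mu$ (where $T_fh_m \in H$ precisely when the integral is finite), the first equality is Tonelli applied to the non-negative double sum/integral, and the second recognises $\mathcal{F}_D(|g|^2)$ as the Radon--Nikodym derivative of $\mu_2$. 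Since $g(D)$ is Hilbert--Schmidt by hypothesis, $\mu_2$ is a finite measure and the right-hand side is a bona fide weighted $L^2$ norm.

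To make the identity rigorous when $f$ is unbounded, I would work with the bounded truncations $f_n := f \chi_{\{|f| \le n\}}$. Since $f_n \in L^\infty(F,\mu)$, the operator $T_{f_n}$ is bounded and $T_{f_n} g(D)$ is manifestly a Hilbert--Schmidt operator for which the computation above is literally valid, giving $\nm{T_{f_n} g(D)}_2^2 = \nm{f_n}_{L^2(\mu_2)}^2$. For the $(\Rightarrow)$ direction, suppose the closure $X$ of $T_f g(D)$ is Hilbert--Schmidt. The factorisation $T_{f_n} = T_{\chi_n} T_f$ on $\Dom(T_f)$ gives $T_{f_n} g(D) = T_{\chi_n}\, T_f g(D)$ on $\Dom(T_f g(D))$; since $T_{\chi_n}$ is a projection and the right-hand side has closure $T_{\chi_n} X$, one obtains $\nm{T_{f_n} g(D)}_2 \le \nm{X}_2$. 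Thus $\nm{f_n}_{L^2(\mu_2)} \le \nm{X}_2$ for every $n$, and monotone convergence ($|f_n|^2 \nearrow |f|^2$) yields $f \in L^2(\mu_2)$.

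For the $(\Leftarrow)$ direction, if $f \in L^2(\mu_2)$, then the bounded-case identity applied to $f_n - f_m$ shows $\{T_{f_n} g(D)\}$ is Cauchy in Hilbert--Schmidt norm (by dominated convergence with majorant $4|f|^2 \in L^1(\mu_2)$), hence converges to a Hilbert--Schmidt operator $X$. To identify $X$ with an extension of $T_f g(D)$, I would invoke Lemma~\ref{lemma:2.0}: for each $m$ with $g(\lambda_m) \neq 0$, the uniform bound $|g(\lambda_m)|\, \nm{f_n\, Uh_m}_2 \le \nm{T_{f_n} g(D)}_2 \le \nm{X}_2$ combined with $|f_n| \nearrow |f|$ gives $f \cdot Uh_m \in L^2(\mu)$, i.e.\ $h_m \in \Dom(T_f)$, and then dominated convergence yields $T_{f_n} g(D) h_m \to T_f g(D) h_m$, which must coincide with the HS-limit $Xh_m$. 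The main obstacle is precisely this final verification---that the Hilbert--Schmidt operator built by approximation is genuinely an extension of $T_f g(D)$ on its natural domain rather than merely agreeing with it on a smaller subspace---and Lemma~\ref{lemma:2.0} is the tailor-made tool to dispatch it.
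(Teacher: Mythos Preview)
Your proposal is correct and shares the core of the paper's argument: the identity $\sum_m \nm{T_f g(D) h_m}^2 = \nm{f}_{2,\mu_2}^2$ and the use of Lemma~\ref{lemma:2.0} to place the eigenvectors $h_m$ in $\Dom(T_f)$. The difference is the approximation scheme. You truncate $f$ to bounded $f_n = f\chi_{\{|f|\le n\}}$, obtain a Cauchy sequence $T_{f_n} g(D)$ in $\mathcal{L}^2$, and then identify its limit with the closure of $T_f g(D)$ on the eigenbasis; the paper instead truncates in the eigenbasis via the finite-rank operators $\sum_{m=1}^N T_f g(D) p_m$, first establishes boundedness of $T_f g(D)$ from the uniform $\mathcal{L}^2$-bound on these partial sums, and then deduces $\mathcal{L}^2$-membership by the noncommutative Fatou lemma. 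Your $(\Rightarrow)$ argument via the factorisation $T_{f_n} g(D) = T_{\chi_n} X$ is arguably more self-contained than the paper's direct appeal to the display~$(*)$, which tacitly relies on the fact that $T_f g(D)$ is already closed (as the composition of a closed multiplication operator with a bounded operator), so that the hypothesis of bounded closure forces $\Dom(T_f g(D)) = H$ and in particular $h_m \in \Dom(T_f g(D))$.
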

\begin{proof}
($\Leftarrow$) 
We first show $T_f g(D)$ is bounded.  Let $L^\infty(F,\mu) \ni f_n \to f$ pointwise with $|f_n| \nearrow |f|$.  Now
\begin{eqnarray*}
\nm{T_{f_n}g(D)h_m}^2
& = & |g(\lambda_m)|^2 \nm{T_{f_n}h_m}^2 \\
& = & |g(\lambda_m)|^2 \int_F |f_n(x)|^2|(Uh_m)(x)|^2 d\mu(x) \\
& = & \int_F |f_n(x)|^2 |g(\lambda_m)|^2 |(Uh_m)(x)|^2 d\mu(x) \\
& \leq & \nm{f_n}^2_{2,\mu_2} \leq \nm{f}^2_{2,\mu_2} .
\end{eqnarray*}
Applying the previous lemma, with $h := U(g(D)h_m)$ and $K := \nm{f}_{2,\mu_2}$, yields $\nm{T_fg(D)h_m} < \infty$.  Hence $h_m \in \text{Dom}(T_fg(D))$ for each $m$,
and $T_fg(D)$ is densely defined.

\medskip Now let $p_m$ be the one-dimensional projection onto $h_m$. 
Then $T_fg(D)p_m$ is one-dimensional.  Note that (*)
\begin{eqnarray*}
\nm{\sum_{m=1}^N T_fg(D)p_m}_2^2
& \stackrel{\text{(\cite{S}, Thm 1.18)}}{=} & \sum_k \nm{\sum_{m=1}^N T_fg(D)p_m h_k}^2 \\
& = & \sum_{m=1}^N \nm{g(\lambda_m) T_fh_m}^2 \\
& = & \sum_{m=1}^N |g(\lambda_m)|^2 \int_F |f(x)|^2 |(Uh_m)(x)|^2 d\mu(x) \\
& = & \int_F |f(x)|^2 \sum_{m=1}^N |g(\lambda_m)|^2 |(Uh_m)(x)|^2 d\mu(x) .
\end{eqnarray*}
This shows $\sum_{m=1}^N T_fg(D)p_m$ is a uniformly bounded sequence of bounded operators as
$$
\nm{\sum_{m=1}^N T_fg(D)p_m} \stackrel{\text{(\cite{S},  Thm  2.7(a))}}{\leq}  \nm{\sum_{m=1}^N T_fg(D)p_m}_2 \leq \nm{f}_{2,\mu_2} .
$$
The second inequality employed~(*). Let $h \in \text{Dom}(T_fg(D))$.  Then
\begin{eqnarray*}
\nm{T_fg(D)h} & = & \nm{\lim_{N \to \infty}
\sum_{m=1}^N T_fg(D)p_m h} \\
& \leq & \sup_N \nm{ \sum_{m=1}^N T_f g(D)p_m h} \leq \nm{f}_{2,\mu_2} \nm{h} .
\end{eqnarray*}
As $T_f g(D)$ is bounded on a dense domain,
$T_fg(D)$ has bounded closure.

\medskip Finally, now that it is established that (the closure) $T_fg(D)$ is bounded, by (*), the noncommutative Fatou Lemma and (\cite{S}, Thm 1.18), $T_fg(D) \in \mathcal{L}^2$ and $\nm{T_fg(D)}_2 = \nm{f}_{2,\mu_{2}}$.

\medskip ($\Rightarrow$) From (*), we can conclude $\int_F |f(x)|^2 \sum_{m=1}^N |g(\lambda_m)|^2 |(Uh_m)(x)|^2 d\mu(x)$ is a bounded increasing sequence.
Hence $\nm{f}_{2,\mu_2} < \infty$.

\end{proof}

\begin{cor} \label{cor:2.2}
Let $g(D) \in \mathcal{L}^1$.  Then:
\begin{prop2list}{12}{0}{2}
\item $T_fg(D) \in \mathcal{L}^1 \Rightarrow f \in L^2(F,\mu_{2})$;
\item $T_fg(D) \in \mathcal{L}^1 \Leftarrow  f \in L^2(F,\mu_{1})$.
\end{prop2list}
In both cases
$$
\Tr(T_fg(D)) = \int_F f(x) d\mu_{g}(x).
$$

\end{cor}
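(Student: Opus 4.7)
The plan is to derive (i) directly from the inclusion $\mathcal{L}^1 \subset \mathcal{L}^2$ and Proposition~\ref{prop:2.1}, to derive (ii) by applying Proposition~\ref{prop:2.1} to the positive function $|g|^{1/2}$, and to obtain the trace formula by reducing to the case of nonnegative $f, g$ and invoking Tonelli.

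For (i), since $\mathcal{L}^1 \subset \mathcal{L}^2$, the hypothesis $T_fg(D) \in \mathcal{L}^1$ forces $T_fg(D)$ to be Hilbert--Schmidt. As $g(D) \in \mathcal{L}^1 \subset \mathcal{L}^2$ is also Hilbert--Schmidt, Proposition~\ref{prop:2.1} applies directly and yields $f \in L^2(F,\mu_2)$.

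For (ii), I would take the Borel polar decomposition $g = u|g|$ with $|u| = 1$ and factor $g(D) = |g|^{1/2}(D)\, u(D)\, |g|^{1/2}(D)$. The positive operator $|g|^{1/2}(D)$ is Hilbert--Schmidt, since $\nm{|g|^{1/2}(D)}_2^2 = \Tr(|g|(D)) = \nm{g(D)}_1 < \infty$. Applying Proposition~\ref{prop:2.1} with $g$ replaced by the positive function $|g|^{1/2}$ -- for which the associated $\mu_2$-measure has Radon--Nikodym derivative $\mathcal{F}_D(|g|) = d\mu_1/d\mu$ -- the hypothesis $f \in L^2(F,\mu_1)$ gives $\overline{T_f|g|^{1/2}(D)} \in \mathcal{L}^2$. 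The ideal inclusion $\mathcal{L}^2 \cdot \mathcal{L}^2 \subset \mathcal{L}^1$ then produces
\[
\overline{T_fg(D)} = \overline{T_f|g|^{1/2}(D)} \cdot u(D)|g|^{1/2}(D) \in \mathcal{L}^1.
\]

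For the trace formula, valid in both cases since $T_fg(D) \in \mathcal{L}^1$, I would expand against the eigenbasis $\{h_m\}$:
\[
\Tr(T_fg(D)) = \sum_m \inprod{h_m}{T_fg(D)h_m} = \sum_m g(\lambda_m) \int_F f(x) |(Uh_m)(x)|^2 \, d\mu(x),
\]
then reduce to $f, g \geq 0$ by splitting each into four nonnegative Borel parts $f = f_1 - f_2 + i(f_3 - f_4)$, $g = g_1 - g_2 + i(g_3 - g_4)$. Each piece $T_{f_j}g_k(D)$ can be written as $T_{\chi_j}\, T_fg(D)\, P_k$, with $T_{\chi_j}$ a bounded multiplication projection on $L^2(F,\mu)$ and $P_k$ a bounded Borel spectral projection of $D$, so each piece remains in $\mathcal{L}^1$. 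For these nonnegative pieces every summand in $\sum_m g_k(\lambda_m) \int_F f_j |Uh_m|^2 d\mu$ is nonnegative, so Tonelli yields $\sum_m g_k(\lambda_m) \int_F f_j |Uh_m|^2 d\mu = \int_F f_j\, \mathcal{F}_D(g_k)\, d\mu = \int_F f_j\, d\mu_{g_k}$. Linearity of the trace and of the signed integral against $d\mu_g = \sum_k \varepsilon_k\, d\mu_{g_k}$ then recombines these four finite summands into $\Tr(T_fg(D)) = \int_F f\, d\mu_g$.

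The only genuinely subtle point is the factorisation in (ii): since $T_f$ is unbounded, interpreting the product of closed operators $\overline{T_f|g|^{1/2}(D)} \cdot u(D)|g|^{1/2}(D)$ as the closure of $T_fg(D)$ requires a short check, carried out exactly as in the proof of Proposition~\ref{prop:2.1} via bounded truncations $f_n$ of $f$ and a Fatou estimate; once the closure of $T_f|g|^{1/2}(D)$ is established to be Hilbert--Schmidt, the eigenvectors $\{h_m\}$ lie in the domain of $T_fg(D)$ and the product agrees with its closure.
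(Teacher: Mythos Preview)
Your argument for (i), (ii), and the eigenbasis expansion of the trace is exactly the paper's: reduce (i) to Proposition~\ref{prop:2.1} via $\mathcal{L}^1 \subset \mathcal{L}^2$, factor $g = |g|^{1/2}\cdot(u|g|^{1/2})$ for (ii), and compute the trace against $\{h_m\}$.

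The one slip is in your Tonelli reduction. The identity $T_{f_j}g_k(D) = T_{\chi_j}\,T_fg(D)\,P_k$ is false for the standard decomposition $f_1 = (\mathrm{Re}\,f)_+$, $g_1 = (\mathrm{Re}\,g)_+$: the positive part of a complex function is \emph{not} obtained by multiplying by an indicator, so neither $f_j = \chi_j f$ nor $g_k = g\cdot\chi_{E_k}$ holds in general. Hence you cannot conclude $T_{f_j}g_k(D)\in\mathcal{L}^1$ this way in case (i). The paper simply swaps sum and integral without comment. If you want to justify the interchange, at least in case (ii) there is a direct route: since $\mu_1(F)=\|g(D)\|_1<\infty$, the hypothesis $f\in L^2(F,\mu_1)$ gives $f\in L^1(F,\mu_1)$, i.e.
\[
\sum_m |g(\lambda_m)|\int_F |f(x)|\,|(Uh_m)(x)|^2\,d\mu(x)
=\int_F |f|\,d\mu_1 <\infty,
\]
and Fubini applies with no splitting needed.
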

\begin{proof}
($\Rightarrow$) $g(D) \in \mathcal{L}^1$ implies $g(D) \in \mathcal{L}^2$
and $T_fg(D) \in \mathcal{L}^1$ implies $T_fg(D) \in \mathcal{L}^2$. 
Applying Proposition \ref{prop:2.1} shows $f \in L^2(F,\mu_2)$.

($\Leftarrow$) There exists $g_1$, $g_2$ such that $g_1g_2 = g$ and
$g_1(D)$ and $g_2(D)$ are Hilbert-Schmidt.  The function $\sqrt{|g|}$
can be chosen as $g_1$.  Then $T_fg_1(D)$ is Hilbert-Schmidt by
Proposition~\ref{prop:2.1} (note that measure~$\mu_2$ with respect
to~$g_1(D)$ coincide with measure~$\mu_1$ with respect to~$g(D)$).
Hence $T_fg_1(D)g_2(D) \in \mathcal{L}^1$.

The trace formula is evident from 
\begin{eqnarray*}
\Tr(T_fg(D)) & = & \sum_m \inprod{h_m}{T_fg(D)h_m} \\
& = & \sum_m g(\lambda_m) \int_F \overline{(Uh_m)(x)} f(x) (Uh_m)(x) d\mu(x) \\
& = & \int_F f(x) \sum_m g(\lambda_m) |(Uh_m)(x)|^2 d\mu(x) .
\end{eqnarray*}
\end{proof}

\begin{rems} \label{rem:RNder}
For $0 < G(D) \in \mathcal{L}^{1,\infty}$,
$\Tr(T_fG(D)^s) = \int_F f(x) d\mu_s$ by setting $g = G^s$, $s > 1$, in Corollary \ref{cor:2.2}.  From comparison with equation (\ref{eq:zeta_measures}) we have $v_s = \mathcal{F}_D(G^s) = d\mu_s/d\mu$, where $v_s$ are the Radon-Nikodym derivatives in
Theorem \ref{thm:1b} of Section \ref{sec:2.4}.  Notice immediately that
$\mu_s(F) = \Tr(G(D)^s)$, $s > 1$.
\end{rems}

\noindent We now fix $G$ such that $G(D) \in \mathcal{L}^{1,\infty}$
and, henceforth, $\mu_s \abc \mu$ is the measure with Radon-Nikodym
derivative $\mathcal{F}_D(|G|^s)$.  For $1 \leq p \leq \infty$, set
\begin{equation} \label{eq:def1weak}
L^p(F,\mu_{1,\infty}) := \inset{f}{f \in L^p(F,\mu_s), s > 1, \nm{f}_{1,\infty,p} < \infty}
\end{equation}
where
$$
\nm{f}_{1,\infty,p} := \sup_{1 < s \leq 2} (s-1)^{\frac{1}{p}} \nm{f}_{p,\mu_s}.
$$
Following (\cite{CRSS}, \S 4.2), for $T \in \mathcal{L}^{1,\infty}$ set
\begin{equation} \label{eq:Z_1}
\nm{T}_{Z_1} := \limsup_{s \to 1^+} (s-1)\Tr(|T|^s)^{\frac{1}{s}}.
\end{equation}
It was shown in (\cite{CRSS}, Thm 4.5) that $\nm{T}_0 \leq
e\nm{T}_{Z_1}$ and $\nm{T}_{Z_1} \leq \nm{T}_{1,\infty}$, where we
recall $\left\| T \right\|_0 = \inf_{V \in \mathcal{L}^{1,\infty}_0} \nm{T-V}_{1,\infty}$ is the Riesz seminorm on $\mathcal{L}^{1,\infty}$.

\begin{cor} \label{cor:2.3}
Let $G(D) \in \mathcal{L}^{1,\infty}$.  Then:
\begin{prop2list}{12}{0}{2}
\item $T_fG(D) \in \mathcal{L}^{1,\infty} \Rightarrow f \in L^2(F,\mu_{2})$;
\item $T_fG(D) \in \mathcal{L}^{1,\infty} \Leftarrow f \in L^2(F,\mu_{1,\infty})$.
\end{prop2list}
In case (ii), $\nm{T_fG(D)}_{Z_1} \leq \nm{f}_{1,\infty,2}\nm{G(D)}_{Z_1}^{1/2}$.
\end{cor}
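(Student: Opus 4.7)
Part~(i) follows directly from Proposition~\ref{prop:2.1} applied with $g = G$. Any $T \in \mathcal{L}^{1,\infty}$ satisfies $\mu_n(T) \leq \nm{T}_{1,\infty}\log(1+n)/n$ (from $n\mu_n(T) \leq \sum_{k=1}^n \mu_k(T) \leq \nm{T}_{1,\infty}\log(1+n)$ by monotonicity), giving $\mathcal{L}^{1,\infty} \subset \mathcal{L}^2$. Hence both $T_fG(D)$ and $G(D)$ are Hilbert--Schmidt, and Proposition~\ref{prop:2.1} directly yields $f \in L^2(F,\mu_2)$.

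For part~(ii), assume $f \in L^2(F,\mu_{1,\infty})$. The idea is to estimate the Schatten $s$-norm of $T_fG(D)$ for $s$ slightly greater than~$1$ by splitting and invoking H\"older's inequality for Schatten classes. Write $T_fG(D) = (T_fG(D)^{s/2})\,G(D)^{1-s/2}$ and use exponents $p=2$ and $q = 2s/(2-s)$ (satisfying $1/p+1/q = 1/s$) to obtain, for $s \in (1,2)$,
$$\nm{T_fG(D)}_s \leq \nm{T_fG(D)^{s/2}}_2 \cdot \nm{G(D)^{1-s/2}}_{2s/(2-s)}.$$
Since $G(D)^{s/2}$ is Hilbert--Schmidt (as $G(D)^s \in \mathcal{L}^1$), Proposition~\ref{prop:2.1} applied with $g = G^{s/2}$ identifies the first factor: the ``$\mu_2$'' of Proposition~\ref{prop:2.1} for this $g$ has Radon--Nikodym derivative $\mathcal{F}_D(G^s)$, i.e.~is precisely $\mu_s$, hence $\nm{T_fG(D)^{s/2}}_2 = \nm{f}_{2,\mu_s}$; the second factor equals $\Tr(G(D)^s)^{(2-s)/(2s)}$. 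In particular $T_fG(D) \in \mathcal{L}^s$ for each $s \in (1,2)$.

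Multiplying by $(s-1)$ and factoring symmetrically,
$$(s-1)\nm{T_fG(D)}_s \leq \bigl[(s-1)^{1/2}\nm{f}_{2,\mu_s}\bigr]\bigl[(s-1)^{1/2}\Tr(G(D)^s)^{(2-s)/(2s)}\bigr].$$
The first bracket is bounded by $\nm{f}_{1,\infty,2}$ on $s \in (1,2]$ by definition~(\ref{eq:def1weak}). For the second, $\Tr(G(D)^s)^{(1-s)/s} \to 1$ as $s \to 1^+$ (since $\nm{G(D)}_{Z_1} \leq \nm{G(D)}_{1,\infty} < \infty$ forces $\log\Tr(G(D)^s) = O(\log(1/(s-1)))$ while $(1-s)/s \to 0$), so $\limsup_{s \to 1^+}(s-1)^{1/2}\Tr(G(D)^s)^{(2-s)/(2s)} = \nm{G(D)}_{Z_1}^{1/2}$. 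Taking $\limsup$ in the factored inequality---using $\limsup(ab) \leq \limsup a \cdot \limsup b$ for bounded non-negative functions---yields the claimed bound $\nm{T_fG(D)}_{Z_1} \leq \nm{f}_{1,\infty,2}\nm{G(D)}_{Z_1}^{1/2}$. The main obstacle, concluding $T_fG(D) \in \mathcal{L}^{1,\infty}$ from this $Z_1$-norm bound, is resolved by invoking (\cite{CRSS}, Thm~4.5): $\nm{T_fG(D)}_0 \leq e\nm{T_fG(D)}_{Z_1} < \infty$, and finiteness of the Riesz seminorm $\nm{\cdot}_0$ is equivalent to membership in $\mathcal{L}^{1,\infty}$.
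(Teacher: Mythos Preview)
Your proof is correct and shares the paper's overall architecture---bound $\|T_fG(D)\|_s$ for $s$ near~$1$, deduce a finite $Z_1$-norm, then invoke (\cite{CRSS}, Thm~4.5) to land in $\mathcal{L}^{1,\infty}$---but the two derivations of the key estimate
\[
\|T_fG(D)\|_s \leq \|f\|_{2,\mu_s}\,\|G(D)\|_s^{1-s/2}
\]
differ. The paper works elementwise: it uses the inequality $\Tr(|T|^s) \leq \sum_m \|Th_m\|^s$ from (\cite{S}, p.~12), expands each summand as an integral over~$F$, and applies H\"older's inequality for \emph{sequences} with exponents $\alpha=2/(2-s)$ and $\beta=2/s$. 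Your route---factor $T_fG(D) = (T_fG(D)^{s/2})\,G(D)^{1-s/2}$ and apply H\"older for Schatten classes---is shorter and more conceptual, leaning on Proposition~\ref{prop:2.1} (with $g=G^{s/2}$, whose associated~$\mu_2$ is exactly~$\mu_s$) as a black box to identify $\|T_fG(D)^{s/2}\|_2 = \|f\|_{2,\mu_s}$. The passage to the limsup is also handled differently: the paper makes a case distinction on whether $\|G(D)\|_s \leq 1$ for all $s>1$ in order to replace the exponent $1-s/2$ by~$1/2$, whereas you observe directly that the correction factor $\Tr(G(D)^s)^{(1-s)/s} \to 1$ (since $(s-1)\Tr(G(D)^s)$ is bounded), which absorbs the exponent discrepancy uniformly and avoids the case split.
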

\begin{proof}
($\Rightarrow$) $G(D) \in \mathcal{L}^{1,\infty}$ implies $G(D) \in \mathcal{L}^2$
and $T_f G(D) \in \mathcal{L}^{1,\infty}$ implies
$T_f G(D) \in \mathcal{L}^{2}$.  Apply Proposition \ref{prop:2.1}.

($\Leftarrow$) 
Without loss, assume $\nm{G(D)}=1$.  By (\cite{S}, p.~12), for $1 < s \leq 2$,
\begin{eqnarray*}
\nm{|T_fG(D)|^s}_{1}
& \leq & \sum_{m} \nm{T_fG(D)h_m}^{s} \\
& = & \sum_{m} \left( \int_F |f(x)|^2 |G(\lambda_m)|^2 |(Uh_m)(x)|^2 d\mu(x) \right)^{\frac s2} \\
& = &
\sum_{m} |G(\lambda_m)|^{\frac{(2-s)}{2}s} \left(\int_F |f(x)|^2 |G(\lambda_m)|^{s} |(Uh_m)(x)|^2 d\mu(x) \right)^{\frac{s}{2}} \\
& = & \sum_m A_m B_m
\end{eqnarray*}
where $A_m := |G(\lambda_m)|^{(2-s)s/2}$, 
$B_m := (\int_F |f(x)|^2 |G(\lambda_m)|^{s} |(Uh_m)(x)|^2 d\mu(x))^{s/2}$.
Set $\alpha := 2/(2-s)$ and $\beta := 2/s$.  It is clear
$\alpha^{-1} + \beta^{-1} = 1$.  Also note that
$\sum_m A_m^\alpha = \sum_m |G(\lambda_m)|^{s} < \infty$ for all $s > 1$.
Hence $\{ A_m \}_{m=1}^\infty \in \ell^\alpha$.  
For $B_m$,
$$
\sum_m B_m^{\beta} = \sum_{m} \int_F |f(x)|^2 |G(\lambda_m)|^{s} |(Uh_m)(x)|^2 d\mu(x) 
 =  \nm{f}_{2,\mu_{s}}^2 < \infty
$$
by (\ref{eq:def1weak}). Hence  $\{ B_m \}_{m=1}^\infty \in \ell^\beta$.
From the H\"{o}lder inequality
\begin{eqnarray*}
\nm{|T_fG(D)|^s}_{1} & \leq & \nm{\{A_m\}}_\alpha \nm{\{B_m \}}_\beta \\
& = & ( \Tr(|G(D)|^{s}) )^{\frac 1\alpha} ( \nm{f}_{2,\mu_{s}}^2 )^{\frac 1\beta} .
\end{eqnarray*}
Thus
\begin{equation} \label{eq:s-estimate}
\nm{T_fG(D)}_s  \leq \nm{G(D)}_s^{1-\frac{s}{2}} \nm{f}_{2,\mu_s} .
\end{equation}
Suppose $\nm{G(D)}_s \leq 1$, $s > 1$.  Then $\nm{G(D)}_{Z_1} = 0$
and, from (\ref{eq:s-estimate}),
$$
\nm{T_fG(D)}_{Z_1} = \limsup_{s \to 1^+} (s-1)\nm{T_fG(D)}_s  \leq \lim_{s\to 1^+} (s-1)^{\frac{1}{2}} \nm{f}_{1,\infty,2} = 0
$$
recalling $\nm{f}_{1,\infty,2} = \sup_{1 < s \leq 2} (s-1)^{1/2} \nm{f}_{2,\mu_s}$ from (\ref{eq:def1weak}).
By (\cite{CRSS}, Thm 4.5), $T_fG(D)$ belongs to $\mathcal{L}^{1,\infty}$.

Now, without loss, we can assume there is $s_0 > 1$ such that $\nm{G(D)}_{s_0} > 1$.  From $\nm{|G(D)|^s}_1 \geq \nm{|G(D)|^{s_0}}_1 > 1$ we have $\nm{G(D)}_s > 1$ for all $1 < s < s_0$. Under these assumptions $\nm{G(D)}_s^{1-s/2} \leq \nm{G(D)}_s^{1/2}$ for $1 < s < s_0$ and, from (\ref{eq:s-estimate}),
$$
(s-1)\nm{T_fG(D)}_{s} \leq \left((s-1)\nm{G(D)}_s \right)^{\frac{1}{2}} (s-1)^{\frac{1}{2}} \nm{f}_{2,\mu_{s}}
$$
for $1 < s < s_0$. This shows that
\begin{equation} \label{eq:1inf_est}
\nm{T_fG(D)}_{Z_1}
\leq \nm{f}_{1,\infty,2} \nm{G(D)}_{Z_1}^\frac{1}{2} < \infty .
\end{equation}
Again, by (\cite{CRSS}, Thm 4.5), $T_fG(D)$ belongs to $\mathcal{L}^{1,\infty}$.
\end{proof}

\begin{ex} \label{ex:sec4}
Let $\TT^n$ be the flat $n$-torus with $L^\infty(\TT^n)$, $L^2(\TT^n)$,
and $\Delta$, as in Example \ref{ex:torus}.  From the example, $\TT^n = E = F$, $\mu_\eta = \mu$ is Lebesgue measure and $M_f = T_f$.
Using the eigenfunctions of the Laplacian from Example \ref{ex:ess},
$\mathcal{F}_\Delta(|G|^s) = \Tr(|G(\Delta)|^s)$ (a constant).
Hence the measures $\mu_s$ associated to $\mathcal{F}_\Delta(|G|^s)$ are
multiples of Lebesgue measure.  In particular, for $T_\Delta = (1+\Delta)^{-n/2}$ we have, for any Borel set $J$,
$$
\mu_s(J) = \Tr(M_{\chi_J} T_\Delta^s ) = \Tr(T_\Delta^s) \mu(J).
$$
Here $\chi_J$ is the characteristic function of $J$.
Hence $\mu_s = \Tr(T_\Delta^s) \mu$, $s > 1$, which implies $\nm{\cdot}_{p,\mu_s} = \Tr(T_\Delta^s)^{1/p} \nm{\cdot}_p$
and $L^p(\TT^n,\mu_s) = L^p(\TT^n)$, $s > 1$.
Let $c := \sup_{1 < s \leq 2} (s-1) \Tr(T_\Delta^s)$, which is finite
as $T_\Delta \in \mathcal{L}^{1,\infty}$ (see, for example, Lemma \ref{lemma:cont_prelemma} below).
Then $\nm{\cdot}_{1,\infty,p} = c^{1/p} \nm{\cdot}_p$
and $L^p(\TT^n,\mu_{1,\infty}) = L^p(\TT^n)$.
We can conclude from Corollary \ref{cor:2.3} that $f \in L^2(\TT^n)$ if and only if
$M_f T_\Delta \in \mathcal{L}^{1,\infty}(L^2(\TT^n))$.
We also obtain, from the proof of Corollary \ref{cor:2.3}, that
$\nm{M_f T_\Delta}_{Z_1} \leq \nm{f}_2 \nm{T_\Delta}_{Z_1}$.
\end{ex}

\begin{ex} \label{ex:compact_M}
Let $M$ be a compact $n$-dimensional Riemannian manifold (without boundary) with Hodge Laplacian $\Delta$.
Let $\sigma_2(\Delta)$ denote the principal symbol
of the elliptic operator $\Delta$.  Locally
$\sigma_2(\Delta)(x_\alpha,\xi_\alpha) = g(x_\alpha)(\xi_\alpha,\xi_\alpha)$
where $x_\alpha = \phi_\alpha^{-1}(x)$ is a point in local co-ordinates
in a chart $(U_\alpha,\phi_\alpha)$ trivialising the tangent bundle,
$T_{x}(M) \cong \{x_\alpha \} \times \RR^n$, and $g(x_\alpha)$
is the matrix representation of the metric $g$
in the trivialisation.  Set
$\sqrt{|g|}(x_\alpha) = \sqrt{\det g(x_\alpha)}$.
The completely positive
pseudo-differential operator $T_\Delta := (1+\Delta^2)^{-n/2}$ is of order $-n$
and, from Connes' Trace Theorem (\cite{C3}), it belongs to $\mathcal{L}^{1,\infty}(L^2(M))$.

Let $h_m$ be an orthonormal basis of $L^2(M)$
and $f \in L^\infty(M)$.
Then, for $s > 1$,
\begin{eqnarray*}
\Tr(M_{f} T_\Delta^s)
& = & \sum_m \int_M f(x) \overline{h_m}(x)(T_\Delta^s h_m)(x) |\mathrm{vol}|(x) \\
& = & \int_M f(x) \left( \sum_m \overline{h_m}(x)(T_\Delta^s h_m)(x) \right) |\mathrm{vol}|(x) .
\end{eqnarray*}
We assume the volume 1-density is normalised.
For the flat torus the $L^1$-function
$$
k_s(x) = \sum_m \overline{h_m}(x)(T_\Delta^s h_m)(x)
$$
is a constant using the eigenvectors of the flat Laplacian.  
This will not be applicable in general.  In the general case
we require bounds on the function $k_s$.

Suppose $0 < c_s < k_s(x) < C_s \ \fa x \in M$.  Then we would have
$$
c_s^{\frac{1}{p}} \nm{f}_p \leq \nm{f}_{p,\mu_s}
:= \Tr(M_{|f|^p} T_\Delta^s)^{\frac{1}{p}}
\leq C_s^{\frac{1}{p}} \nm{f}_p
$$
for all $p \geq 1$.  So the $L^p$ norms
and the $\nm{\cdot}_{p,\mu_s}$ norms would be equivalent.

Let us examine the function $k_s$.  Let $P$ be a positive pseudo-differential operator of order $-ns$.
Let $y$ be a point in $U_\alpha$,
and $V_y \subset U_\alpha$ be a
rectangular neighbourhood of $y$.
For convenience we use $\phi_\alpha^{-1}(V_y) = \TT^n$,
the adjustment for the size of $V(y_\alpha)$
will not matter in the following argument
as $M$ is compact (the cover of $M$ by rectangular
neighbourhoods has a finite subcover).

Set $l_a$, $a \in \ZZ^n$, to be the function on
$M$ that is $e^{i x_\alpha \cdot a}$ in local co-ordinates on $\phi_\alpha^{-1}(V_y)$ and $0$ on $M \setminus V(y)$.
Note that
$$
\inprod{|g|^{-1/4}l_{a}}{|g|^{-1/4}l_{b}}
= \delta_{a,b} \chi_{V(y))} .
$$
On $V(y)$ we have the local Fourier decomposition
$$
h_m = \sum_{a} \inprod{|g|^{-1/4}l_a}{h_m}
|g|^{-1/4} l_a .
$$
Hence, for $x$ in the interior of $V(y)$,
$$
\overline{h_m}(x)(P h_m)(x)
= \sum_{a,b} \inprod{|g|^{-1/4}l_a}{h_m}
\inprod{h_m}{|g|^{-1/4}l_b}
|g|^{-1/4} \overline{l_b}(x) (P|g|^{-1/4}l_a)(x) .
$$
Then
\begin{eqnarray*}
\sum_m \overline{h_m}(x)(P h_m)(x)
& = & \sum_m \sum_{a,b} \inprod{|g|^{-1/4}l_a}{h_m}
\inprod{h_m}{|g|^{-1/4}l_b}
|g|^{-1/4} \overline{l_b}(x) (P|g|^{-1/4}l_a)(x) \\
& =  & \sum_{a,b} \langle |g|^{-1/4}l_a |
\left( \sum_m | h_m \rangle \langle h_m | \right)
| |g|^{-1/4}l_a \rangle
|g|^{-1/4} \overline{l_b}(x) (P|g|^{-1/4}l_a)(x_\alpha)) \\
& =  & \sum_{a} |g|^{-1/4} \overline{l_a}(x) (P|g|^{-1/4}l_a)(x) .
\end{eqnarray*}

Define the pseudo-differential
operator $P^{|g|}$ as
$|g|^{-1/4} P |g|^{-1/4}$.
Then, by definition of the symbol,
$$
|g|^{-1/4} \overline{l_a}(x) (P|g|^{-1/4}l_a)(x)
= \sigma(P^{|g|})(x_\alpha,a)
$$
up to some smooth term.  Hence, up to a smoothing term,
\begin{eqnarray}
\sum_m \overline{h_m}(x)(P h_m)(x)
& \approx &  \sum_{a} \sigma(P^{|g|})(x_\alpha,a) \label{eq:crux}.
\end{eqnarray}
The operator $P^{|g|}$ is of order
$-ns$, and, by the definition of a symbol of order $-ns$,
there is a constant $K_s$ (valid for all $x \in M$ as $M$ is compact)
such that $| \sigma(P^{|g|})(x_\alpha,a) | \leq K_s (1+\nm{a}^2)^{-ns/2}$.
This inequality holds with the addition of any smoothing term,
thus, from~\eqref{eq:crux},
\begin{eqnarray*}
\sum_m \overline{h_m}(x)(P h_m)(x)
& \leq &  K_s \sum_{a} (1+\nm{a}^2)^{-ns/2} =: C_s < \infty.
\end{eqnarray*}
Suppose $P = Q^s$, $s > 1$, where $Q$ is a positive pseudo-differential
operator of order $-n$.  That $Q$ is order $-n$
immediately implies there is a constant $K$, independent
of $s > 1$ such that $| \sigma((Q^s)^{|g|})(x_\alpha,a) | \leq K (1+\nm{a}^2)^{-ns/2}$.
Hence, for $1< s \leq 2$
\begin{equation} \label{eq:bound1}
\nm{f}_{p,\mu_s} \leq C^\frac{1}{p} \nm{f}_p
\end{equation}
for a constant $C : = K \sup_{1<s\leq2} (s-1)\sum_{a} (1+\nm{a}^2)^{-ns/2}$ independent of $s$, and 
\begin{equation} \label{eq:bound2}
\nm{f}_{1,\infty,p} \leq  C^\frac{1}{p} \nm{f}_p .
\end{equation}

Now suppose $Q$ is completely positive.  Then,
for any $0 < f \in L^\infty(M)$,
$0 < \Tr(M_fQ^s) = \int_M f(x) (\sum_m \overline{h_m}(x)(Q h_m)(x)) |\mathrm{vol}|(x)$.
Hence $k_s(x) = \sum_m \overline{h_m}(x)(Q h_m)(x) > 0$ almost everywhere.
However, from \eqref{eq:crux}, $k_s$ is identified with a smooth function in $x$.
Therefore, as $M$ is compact, $k_s$ attains some minimum value $c_s$.
Hence
\begin{equation} \label{eq:bound3}
c_s^{\frac{1}{p}} \nm{f}_p \leq \nm{f}_{p,\mu_s} .
\end{equation}

We can now apply the bounds~\eqref{eq:bound1}, \eqref{eq:bound2} and \eqref{eq:bound3}
to the completely positive pseudo-differential operator
$T_\Delta$ of order $-n$.
From Corollary \ref{cor:2.3} and \eqref{eq:bound2}, we have
\begin{equation} \label{eq:manifold_inequality_2}
\nm{M_f T_\Delta }_{Z_1} \leq C \nm{f}_2 \nm{T_\Delta}_{Z_1}^\frac{1}{2}.
\end{equation}
Using a sequence $L^\infty(M) \ni f_n \to f \in L^2(M)$
converging in the $L^2$-norm, we obtain
$M_f T_\Delta \in \mathcal{L}^{1,\infty}$ for all
$f \in L^2(M)$ and the inequality \eqref{eq:manifold_inequality_2} holds.
Moreover, if $M_f T_\Delta \in \mathcal{L}^{1,\infty}$
then $f \in L^2(M,\mu_2)$, which implies $f \in L^2(M)$ by \eqref{eq:bound3}.
Hence $M_f T_\Delta \in \mathcal{L}^{1,\infty}$
if and only if $f \in L^2(M)$.
\end{ex}


\subsection{Residues of Zeta Functions} \label{sec:extres}

\medskip In this section we extend the residue formulation of the noncommutative integral, see (\cite{CM}, App A), \cite{CPS}, \cite{CRSS}, to a specific class of unbounded functions.
As in (\ref{eq:def1weak}), for $1 \leq p \leq \infty$, set
\begin{equation*} 
L^p(F,\mu_{1,\infty}) := \inset{f}{f \in L^p(F,\mu_s), s > 1, \nm{f}_{1,\infty,p} < \infty}
\end{equation*}
where
$$
\nm{f}_{1,\infty,p} := \sup_{1 < s \leq 2} (s-1)^{\frac{1}{p}} \nm{f}_{p,\mu_s}.
$$
\begin{lemma} \label{lemma:cont_prelemma}
Let $G(D) \in \mathcal{L}^{1,\infty}$.  Then
$$
\sup_{1 < s \leq 2} (s-1)\mu_s(F) \leq \max \{ \nm{G(D)}_{1,\infty} , \nm{G(D)}_{1,\infty}^2 \}.
$$
\end{lemma}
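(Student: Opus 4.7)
The plan is to reduce the lemma to a scalar inequality about the singular values of $G(D)$, then prove that inequality by weak majorization and a Jensen-type integral comparison. First I would compute $\mu_s(F)$ explicitly. By definition $\mu_s$ has density $\mathcal{F}_D(|G|^s)(x) = \sum_m |G(\lambda_m)|^s |(Uh_m)(x)|^2$, so integrating term by term and using unitarity of $U$ (which forces $\int_F |(Uh_m)|^2 d\mu = 1$) collapses the expression to
\[
\mu_s(F) \;=\; \sum_m |G(\lambda_m)|^s \;=\; \Tr(|G(D)|^s) \;=\; \sum_{n \geq 1} \mu_n^s,
\]
where $\mu_n := \mu_n(G(D))$ are the singular values in decreasing order (this matches Remark~\ref{rem:RNder}). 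Writing $M := \nm{G(D)}_{1,\infty}$, the target becomes $(s-1)\sum_n \mu_n^s \leq M^s$ on $s \in (1,2]$, which implies the stated bound because $M^s \leq \max\{M,M^2\}$ throughout that range.

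The core step is a weak (sub)majorization against the extremal sequence $\mu_n^{\ast} := M\log(1 + 1/n)$. The hypothesis $G(D) \in \mathcal{L}^{1,\infty}$ says exactly $\sum_{k=1}^n \mu_k \leq M\log(1+n)$, while a telescoping calculation gives $\sum_{k=1}^n \mu_k^{\ast} = M\log(1+n)$. Since $(\mu_n)$ and $(\mu_n^{\ast})$ are decreasing nonnegative sequences with $\mu_n \to 0$, we have the submajorization $(\mu_n) \prec_w (\mu_n^{\ast})$. Applying the Hardy--Littlewood--P\'olya inequality to the convex increasing function $\phi(x) = x^s$ on $[0,\infty)$ (which satisfies $\phi(0)=0$, so the constant term in the ramp-function representation of $\phi$ vanishes) gives, after a routine truncation-and-limit argument,
\[
\sum_n \mu_n^s \;\leq\; \sum_n (\mu_n^{\ast})^s \;=\; M^s \sum_{n \geq 1} \log(1 + 1/n)^s.
\]

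Finally I would bound the remaining scalar sum by Jensen. Because $\log(1 + 1/n) = \int_n^{n+1} dt/t$ is the average of $1/t$ on the unit interval $[n,n+1]$ and $x \mapsto x^s$ is convex for $s \geq 1$, Jensen yields $\log(1+1/n)^s \leq \int_n^{n+1} t^{-s} dt$, and summing over $n \geq 1$ telescopes to $\int_1^\infty t^{-s} dt = 1/(s-1)$. Combining the three steps,
\[
(s-1)\mu_s(F) \;\leq\; M^s \;\leq\; \max\{M,M^2\} \qquad (s \in (1,2]),
\]
as claimed.

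The main hazard is the weak-majorization step: one must be careful that the HLP inequality applies to our infinite decreasing sequence even though $\sum \mu_n = \sum \mu_n^{\ast} = \infty$. This is handled by truncating to length $N$, zero-padding, applying HLP on the finite sequences, and passing $N \to \infty$ by monotone convergence. Note that the naive pointwise bound $\mu_n \leq M\log(1+n)/n$ is strictly too weak to give a sharp estimate; it is the Cesaro constraint $\sigma_n \leq M\log(1+n)$, repackaged as majorization against $\mu_n^{\ast}$, that is precisely what one needs. The Jensen estimate and the initial reduction are then routine.
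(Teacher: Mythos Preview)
Your proof is correct and follows the same three-step outline as the paper: identify $\mu_s(F) = \Tr(|G(D)|^s)$, establish the scalar inequality $(s-1)\Tr(|G(D)|^s) \leq \|G(D)\|_{1,\infty}^s$, and then bound $M^s$ by $\max\{M,M^2\}$ on $1<s\leq 2$. The difference is in the middle step: the paper simply invokes the second-to-last display of \cite[p.~267]{CRSS} for $(s-1)\Tr(|G(D)|^s)\leq\|G(D)\|_{1,\infty}^s$, whereas you give a self-contained argument via weak majorization against the extremal sequence $\mu_n^\ast = M\log(1+1/n)$, Hardy--Littlewood--P\'olya for the convex function $x\mapsto x^s$, and Jensen on each unit interval to collapse $\sum_n\log(1+1/n)^s$ to $\int_1^\infty t^{-s}\,dt=1/(s-1)$. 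Your approach buys independence from the external reference and makes transparent exactly which convexity and majorization facts are in play; the paper's approach is shorter. Both arrive at the identical sharp bound $M^s$.
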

\begin{proof}
From Remark \ref{rem:RNder}, $\mu_s(F) = \Tr(|G(D)|^s)$.
From the second last display of (\cite{CRSS}, p.~267),
$(s-1) \Tr(|G(D)|^s) \leq \nm{G(D)}_{1,\infty}^s$.
Then $\sup_{ 1< s \leq 2} \nm{G(D)}_{1,\infty}^s = \nm{G(D)}_{1,\infty}$ or $\nm{G(D)}_{1,\infty}^2$.
\end{proof}

For brevity, set $C := \max \{ \nm{G(D)}_{1,\infty} ,  \nm{G(D)}_{1,\infty}^2 \}$.

\begin{lemma} \label{lemma:cont_embed}
Let $q \geq p \geq 1$.  Then $L^q(F,\mu_{1,\infty})$ is continuously embedded in $L^p(F,\mu_{1,\infty})$.
In particular, $\nm{f}_{1,\infty,p} \leq C^{1/p - 1/q} \nm{f}_{1,\infty,q}$, $\fa f \in L^q(F,\mu_{1,\infty})$.
\end{lemma}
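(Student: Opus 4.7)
The strategy is to use Hölder's inequality on the finite measure $\mu_s$, and then to isolate the quantity $(s-1)\mu_s(F)$ which, by Lemma \ref{lemma:cont_prelemma}, is bounded uniformly by $C$ for $s \in (1,2]$.

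First, I would fix $s \in (1,2]$ and $q \geq p \geq 1$. Since $\mu_s$ is a finite measure on $F$ (with total mass $\mu_s(F) = \Tr(|G(D)|^s)$, by Remark \ref{rem:RNder}), the standard inclusion of $L^q$ in $L^p$ for finite measures gives
$$
\nm{f}_{p,\mu_s} \leq \mu_s(F)^{\frac{1}{p}-\frac{1}{q}} \nm{f}_{q,\mu_s},
$$
via applying Hölder's inequality to $|f|^p \cdot 1$ with conjugate exponents $q/p$ and $q/(q-p)$ (with the usual interpretation when $p=q$, where the estimate is trivial).

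Next, I would multiply by $(s-1)^{1/p}$ and perform the algebraic rearrangement
$$
(s-1)^{\frac{1}{p}} \nm{f}_{p,\mu_s} \leq \bigl[(s-1)\mu_s(F)\bigr]^{\frac{1}{p}-\frac{1}{q}} \cdot (s-1)^{\frac{1}{q}} \nm{f}_{q,\mu_s},
$$
which is just bookkeeping on the exponents of $(s-1)$: the $\mu_s(F)$ factor contributes $(s-1)^{1/p - 1/q}$, and combining with the original $(s-1)^{1/p}$ on the left leaves $(s-1)^{1/q}$ on the right.

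Finally, I would take the supremum over $s \in (1,2]$ on both sides and split the supremum of the product into the product of suprema:
$$
\nm{f}_{1,\infty,p} \leq \Bigl(\sup_{1<s\leq 2} (s-1)\mu_s(F)\Bigr)^{\frac{1}{p}-\frac{1}{q}} \cdot \sup_{1<s\leq 2} (s-1)^{\frac{1}{q}} \nm{f}_{q,\mu_s} \leq C^{\frac{1}{p}-\frac{1}{q}} \nm{f}_{1,\infty,q},
$$
using Lemma \ref{lemma:cont_prelemma} to bound the first factor by $C^{1/p - 1/q}$ and the definition of $\nm{f}_{1,\infty,q}$ for the second. This yields both membership $f \in L^p(F,\mu_{1,\infty})$ whenever $f \in L^q(F,\mu_{1,\infty})$ and the claimed continuity estimate. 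There is no real obstacle here; the only thing to be careful about is the exponent arithmetic and the harmless degenerate case $p = q$.
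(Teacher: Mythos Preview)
Your proof is correct and follows essentially the same approach as the paper: both use the standard finite-measure embedding $\nm{f}_{p,\mu_s} \leq \mu_s(F)^{1/p-1/q}\nm{f}_{q,\mu_s}$, rearrange the powers of $(s-1)$, and take the supremum over $s$, invoking the bound $C$ from Lemma~\ref{lemma:cont_prelemma}. Your write-up is in fact slightly more careful about the supremum range $1<s\leq 2$ and the exponent bookkeeping than the paper's own proof.
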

\begin{proof}
We recall, as $\mu_s$ is a finite measure on $F$, the standard embedding
$$
\nm{f}_{p,\mu_s} \leq \mu_s(F)^{\frac 1p - \frac 1q} \nm{f}_{q,\mu_s} .
$$
Hence
\begin{eqnarray*}
\nm{f}_{1,\infty,p} & = & \sup_{s > 1} (s-1)^{ \frac 1p } \nm{f}_{p,\mu_s} \\
& \leq & \sup_{s > 1} (s-1)^{\frac 1p - \frac 1q} \mu_s(F)^{\frac 1p - \frac 1q} (s-1)^{\frac 1q} \nm{f}_{q,\mu_s} \\
& \leq & C^{\frac 1p - \frac 1q} \nm{f}_{1,\infty,q} .
\end{eqnarray*}
\end{proof}

Denote by $L^p_0(F,\mu_{1,\infty}) \subset L^p(F,\mu_{1,\infty})$ the closure
of step functions on $F$ in the norm $\nm{\cdot}_{1,\infty,p}$.

\begin{lemma}
Let $1 \leq p \leq \infty$.  Then $L^\infty(F,\mu) \subset L_0^p(F,\mu_{1,\infty})$ and $\nm{f}_{1,\infty,p} \leq C^{1/p} \nm{f}_\infty$, $\fa f \in L^\infty(F,\mu)$.
\end{lemma}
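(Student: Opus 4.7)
The plan is to prove the norm estimate first, then deduce the inclusion from it by a routine density argument using that bounded measurable functions are uniformly approximable by simple functions.

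For the estimate, I would start from the pointwise bound
$$
\nm{f}_{p,\mu_s}^p = \int_F |f(x)|^p \, d\mu_s(x) \leq \nm{f}_\infty^p \, \mu_s(F)
$$
which is valid for any $f \in L^\infty(F,\mu)$ and any $s>1$ (noting $\mu_s$ is a finite measure on $F$ for $s>1$ by Remark \ref{rem:RNder} since $G(D)^s \in \mathcal{L}^1$). Taking the $p$-th root, multiplying by $(s-1)^{1/p}$, and applying Lemma \ref{lemma:cont_prelemma} to control $(s-1)\mu_s(F) \leq C$ uniformly on $(1,2]$ yields
$$
(s-1)^{1/p} \nm{f}_{p,\mu_s} \leq \nm{f}_\infty \bigl( (s-1)\mu_s(F) \bigr)^{1/p} \leq C^{1/p} \nm{f}_\infty .
$$
Taking the supremum over $s \in (1,2]$ gives $\nm{f}_{1,\infty,p} \leq C^{1/p}\nm{f}_\infty$, and, in particular, this shows $L^\infty(F,\mu)$ embeds continuously into $L^p(F,\mu_{1,\infty})$.

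For the inclusion into the closure $L^p_0(F,\mu_{1,\infty})$, I would use the standard fact that any $f \in L^\infty(F,\mu)$ is the uniform limit of simple ($=$ step) functions: partition the essential range of $f$ into pieces of diameter $< 1/n$ and take $s_n$ constant on the $\mu$-measurable preimages. Then $\nm{s_n - f}_\infty \to 0$, and by the inequality just established
$$
\nm{s_n - f}_{1,\infty,p} \leq C^{1/p} \nm{s_n - f}_\infty \longrightarrow 0 .
$$
Since each $s_n$ is a step function, this shows $f \in L^p_0(F,\mu_{1,\infty})$.

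There is no substantial obstacle here: the whole argument reduces to combining the trivial $L^\infty \to L^p$ bound on a finite measure space with the uniform-in-$s$ estimate on $(s-1)\mu_s(F)$ already supplied by Lemma \ref{lemma:cont_prelemma}. The only point worth checking carefully is that the approximating simple functions can be chosen inside $L^\infty(F,\mu)$ itself so that the bound applies to their differences with $f$, which is automatic from the construction.
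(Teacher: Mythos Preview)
Your proof is correct and follows essentially the same approach as the paper: first establish the norm estimate via $(s-1)^{1/p}\nm{f}_{p,\mu_s} \leq \nm{f}_\infty\bigl((s-1)\mu_s(F)\bigr)^{1/p} \leq C^{1/p}\nm{f}_\infty$ using Lemma~\ref{lemma:cont_prelemma}, then approximate $f$ uniformly by step functions and apply the estimate to the differences. The paper's proof is slightly terser but otherwise identical in structure and content.
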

\begin{proof}
If $f \in L^\infty(F,\mu)$, then $(s-1)^{1/p}\nm{f}_{p,\mu_s} \leq \nm{f}_\infty ((s-1)\mu_s(F))^{1/p} \leq \nm{f}_\infty C^{1/p}$.  Hence $L^\infty(F,\mu) \subset L^p(F,\mu_{1,\infty})$ for any $p$.  Let $f_n$ be step functions such that
$\nm{f - f_n}_\infty \to 0$ as $n \to \infty$.  Then 
$\nm{f-f_n}_{1,\infty,p} \leq \nm{f-f_n}_\infty C^{1/p}$. It follows $\nm{f-f_n}_{1,\infty,p} \to 0$ as $n \to \infty$.
\end{proof} 

From the lemmas we have the continuous embeddings,
$$
L^\infty(F,\mu) \subset L^q_0(F,\mu_{1,\infty}) \subset L^q(F,\mu_{1,\infty}) \subset L^p(F,\mu_{1,\infty}),
$$
for $q \geq p \geq 1$. 

\begin{thm} \label{cor:2.3a}
Let $0 < G(D) \in \mathcal{L}^{1,\infty}$ and $\bl \in BL \cap DL$.
Then 
$$
\phi_{\mathcal{L}(\bl)}(T_f) :=
\Tr_{\mathcal{L}(\bl)}(T_fG(D)) = 
\bl \left( \frac{1}{k} \int_F f(x) d\mu_{1+\frac{1}{k}}(x) \right) \ , \ \fa f \in L^2_0(F,\mu_{1,\infty}) .
$$
Moreover, if $\lim_{k \to \infty} k^{-1} \int_F h(x) d\mu_{1+k^{-1}}(x)$ exists for all $h \in L^\infty(F,\mu_{1,\infty})$, then
$$
\phi_{\omega}(T_f) :=
\Tr_{\omega}(T_fG(D)) = 
\lim_{k \to \infty} \frac{1}{k} \int_F f(x) d\mu_{1+\frac{1}{k}}(x) 
\ , \ \fa f \in L^2_0(F,\mu_{1,\infty})
$$
and all $\omega \in DL_2$.
\end{thm}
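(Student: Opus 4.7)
The plan is to prove the first identity first for $f\in L^\infty(F,\mu)$ by direct appeal to Corollary \ref{cor:resA} and Remark \ref{rem:RNder}, and then extend to all $f\in L^2_0(F,\mu_{1,\infty})$ using $\|\cdot\|_{1,\infty,2}$-continuity of both sides; the ``moreover'' clause will then follow by feeding the stated hypothesis into the measurability criterion of Corollary \ref{cor:resA}.

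For $f\in L^\infty(F,\mu)$ the operator $T_f$ is bounded, so Corollary \ref{cor:resA} applied to $A=T_f$, $T=G(D)$ gives
\[
\Tr_{\mathcal{L}(\bl)}(T_f G(D))=\bl\!\left(\tfrac{1}{k}\,\Tr(T_f G(D)^{1+\frac{1}{k}})\right).
\]
Since $G(D)^{1+1/k}\in\mathcal{L}^1$ and finiteness of $\mu_s(F)$ places $L^\infty(F,\mu)$ inside $L^2(F,\mu_s)$ for every $s>1$, Remark \ref{rem:RNder} (equivalently, Corollary \ref{cor:2.2} with $g=G^{1+1/k}$) rewrites the inner trace as $\int_F f\,d\mu_{1+1/k}$, establishing the identity on $L^\infty(F,\mu)$.

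For the density step, I would approximate $f\in L^2_0(F,\mu_{1,\infty})$ by step functions $f_n$ with $\|f-f_n\|_{1,\infty,2}\to 0$. Cauchy--Schwarz together with Lemma \ref{lemma:cont_prelemma} produces the uniform bound
\[
\sup_{k\in\NN}\tfrac{1}{k}\left|\int_F(f-f_n)\,d\mu_{1+1/k}\right|\leq C^{1/2}\,\|f-f_n\|_{1,\infty,2},
\]
so the right-hand side is continuous in $\|\cdot\|_{1,\infty,2}$ (and the sequence in $k$ remains in $\ell^\infty$, so $\bl$ is applicable). For the left-hand side, $\Tr_\omega$ vanishes on $\mathcal{L}^{1,\infty}_0$ and is bounded by a multiple of $\|\cdot\|_{1,\infty}$, hence factors through the Riesz seminorm $\|\cdot\|_0$; by (\cite{CRSS}, Thm 4.5) one has $\|\cdot\|_0\leq e\|\cdot\|_{Z_1}$, while Corollary \ref{cor:2.3} supplies $\|T_{f-f_n}G(D)\|_{Z_1}\leq\|f-f_n\|_{1,\infty,2}\,\|G(D)\|_{Z_1}^{1/2}$. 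Passing to $n\to\infty$ then transports the identity to all $f\in L^2_0(F,\mu_{1,\infty})$.

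For the ``moreover'' clause, specialising the hypothesis to $h=\chi_J$ for Borel $J\subset F$ yields existence of $\lim_k \tfrac{1}{k}\Tr(T_{\chi_J}G(D)^{1+1/k})$, so Theorem \ref{thm:resPA} makes $T_{\chi_J}G(D)T_{\chi_J}$ measurable. Because every projection in the von Neumann algebra generated by $T_f$ and $T_f^*$ (with $f\in L^\infty$) lies in the commutative algebra $U^*L^\infty(F,\mu)U$ and is therefore of the form $T_{\chi_J}$, the sufficient condition in Corollary \ref{cor:resA} upgrades this to measurability of $T_f G(D)$ and gives $\Tr_\omega(T_f G(D))=\lim_k \tfrac{1}{k}\int_F f\,d\mu_{1+1/k}$ on $L^\infty$ for every $\omega\in DL_2$. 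The same uniform bound shows this $k$-limit is Cauchy in $f\in L^2_0$ with respect to $\|\cdot\|_{1,\infty,2}$, hence extends continuously to $L^2_0(F,\mu_{1,\infty})$, and the continuity argument from the preceding paragraph carries both the identity and its $\omega$-independence to all of $L^2_0$. The main technical obstacle is the left-hand-side continuity: it requires stitching together the chain $|\Tr_\omega|\lesssim\|\cdot\|_0\lesssim\|\cdot\|_{Z_1}$ with the non-trivial operator estimate of Corollary \ref{cor:2.3} in order to translate continuity in the trace ideal back to continuity in the function norm $\|\cdot\|_{1,\infty,2}$.
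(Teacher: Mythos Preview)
Your proof is correct and follows essentially the same approach as the paper's: both establish the identity on simple functions (you via Corollary~\ref{cor:resA} on $L^\infty$, the paper via Theorem~\ref{thm:resPA} on step functions and linearity), then extend to $L^2_0(F,\mu_{1,\infty})$ by controlling the left side through the chain $|\Tr_\omega|\le\|\cdot\|_0\le e\|\cdot\|_{Z_1}$ together with Corollary~\ref{cor:2.3}, and the right side through the embedding $\|\cdot\|_{1,\infty,1}\lesssim\|\cdot\|_{1,\infty,2}$ (your Cauchy--Schwarz argument is exactly the content of Lemma~\ref{lemma:cont_embed}). The ``moreover'' clause is handled identically by specialising the hypothesis to characteristic functions and invoking the measurability criterion of Theorem~\ref{thm:resPA}/Corollary~\ref{cor:resA}.
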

\begin{proof}
By hypothesis $f_n = \sum_j b_{n,j} \chi_{F_{n,j}} \to f$ where $F_{n,j} \subset F$ are Borel and disjoint, $\chi_{F_{n,j}}$ is the characteristic function of $F_{n,j}$, $b_{n,j} \in \CC$, the sum over $j$ is finite, and $\nm{f_n-f}_{1,\infty,2} \to 0$ as $n \to \infty$.  From Corollary \ref{cor:2.3} and (\cite{CRSS}, Thm 4.5),
$\nm{T_f G(D)}_0 \leq e \nm{f}_{1,\infty,2} \nm{G(D)}_{Z_1}^{1/2}$.  
Then, by construction,
\begin{equation} \label{prop:exteq1}
\left| \Tr_{\mathcal{L}(\bl)}((T_f-T_{f_n})G(D)) \right| \leq \nm{(T_f-T_{f_n})G(D)}_0
\stackrel{n}{\to} 0 . 
\end{equation}
By Corollary \ref{cor:2.2},
\begin{eqnarray*}
\bl \left( \left| \frac 1k \Tr((T_{f}-T_{f_n})G(D)^{1 +\frac 1k}) \right| \right)
& \leq & \bl \left( \frac 1k \int_F |(f-f_n)(x)| d\mu_{1 + \frac 1k}(x) \right) \\
& \leq & \sup_k  \frac 1k \nm{f-f_n}_{1,\mu_{1+k^{-1}}}  \\
& \leq & \nm{f-f_n}_{1,\infty,1}.
\end{eqnarray*}
From Lemma \ref{lemma:cont_embed}, $f_n$ converges to $f$ in $\nm{\cdot}_{1,\infty,1}$.  Hence
\begin{equation} \label{prop:exteq2}
\lim_{n \to \infty} \bl \left( \frac 1k \Tr((T_f-T_{f_n})G(D)^{1 +\frac 1k}) \right)
= 0 .
\end{equation}
Set the projection $P_{n,j} := T_{\chi_{F_{n,j}}}$.  Then
\begin{eqnarray}
\Tr_{\mathcal{L}(\bl)}(T_{f_n}G(D)) & = &
\Tr_{\mathcal{L}(\bl)}(\sum_j b_{n,j} P_{n,j}G(D)) \nonumber \\
& = & \sum_j b_{n,j} \Tr_{\mathcal{L}(\bl)}(P_{n,j}G(D)P_{n,j}) \nonumber \\
& \stackrel{\text{(Thm \ref{thm:resPA})}}{=} & \sum_j b_{n,j} \bl \left( \frac 1k \Tr(P_{n,j}G(D)^{1 +\frac 1k}P_{n,j}) \right) \nonumber \\
& = & \bl \left( \frac 1k \Tr(T_{f_n}G(D)^{1 +\frac 1k}) \right) .  \label{prop:exteq3}
\end{eqnarray}
If $\lim_{k \to \infty} k^{-1} \Tr(PG(D)^{1 + k^{-1}}P)$ exists
for all projections $P \in U^*L^\infty(F,\mu)U$,
then, by Theorem \ref{thm:resPA}, $\mathcal{L}(\bl)$ may be replaced in the preceding display by any $\omega \in DL_2$ and $\bl$ by $\lim$.
The results of the theorem follow from (\ref{prop:exteq1}), (\ref{prop:exteq2}) and (\ref{prop:exteq3}).
\end{proof}

\begin{ex} \label{ex:4.2}
Let $\TT^n$ be the flat $n$-torus with $L^\infty(\TT^n)$, $L^2(\TT^n)$,
and Hodge Laplacian $\Delta$, as in Examples \ref{ex:torus} and \ref{ex:sec4}.  Set $T_\Delta = (1+\Delta)^{-n/2}$.
From Example \ref{ex:sec4}, $M_f T_\Delta \in \mathcal{L}^{1,\infty}(L^2(\TT^n))$ iff $f \in L^2(\TT^n) (=L^2_0(\TT^n,\mu_{1,\infty})=L^2(\TT^n,\mu_{1,\infty}))$ and $\mu_s$
is a multiple of Lebesgue measure, $\mu_s = \Tr(T_\Delta^s)\mu$
for each $s > 1$.
From Theorem \ref{cor:2.3a}, for all $f \in L^2(\TT^n)$ and $\omega \in DL_2$,
\begin{eqnarray*}
\Tr_{\omega}(M_f T_\Delta) & = & \lim_{k \to \infty} \frac{1}{k} \int_{\TT^n} f(\mathbf{x}) \Tr(T_\Delta^{1+k^{-1}}) d^n\mathbf{x} \\
& = & \int_{\TT^n} f(\mathbf{x})d^n\mathbf{x} \lim_{k \to \infty} \frac{1}{k} \Tr(T_\Delta^{1+k^{-1}}) \\
& = & c \int_{\TT^n} f(\mathbf{x})d^n\mathbf{x}
\end{eqnarray*}
where $c = \lim_{k \to \infty} k^{-1} \Tr(T_\Delta^{1+k^{-1}}) = \lim_{s \to 1^+} (s-1) \Tr(T_\Delta^s) = \Tr_\omega(T_\Delta) < \infty$,
see (\cite{CM}, p.~236).
\end{ex}

\subsection{Sufficient Criteria for Normality} \label{sec:3}

\medskip Let $0 < G(D) \in \mathcal{L}^{1,\infty}$.  
Define $\nu_{G,\omega}: \text{Borel}(F) \to [0,\infty)$ for $\omega \in DL_2$
by
$$
\nu_{G,\omega}(J) := \Tr_\omega(T_{\chi_J}G(D)T_{\chi_J}) \ , \ \fa J \in \text{Borel}(F)
$$
where $\text{Borel}(F)$ denotes the Borel sets of $F$
and $\chi_J$ is the characteristic function of $J$.
We list sufficient criteria for $\nu_{G,\omega}$ to be a measure for all $\omega \in DL_2$.  

\begin{prop} \label{prop:suff0}
We have the following sequence of implications, \textrm{(i)} $\Rightarrow$ \textrm{(ii)}
$\Rightarrow$ \textrm{(iii)} $\Rightarrow$ \textrm{(iv)}:
\begin{prop2list}{12}{0}{2}
\item the sequence $\{ |Uh_m|^2 \}_{m=1}^\infty \subset L^1(F,\mu)$ is dominated by $l \in L^1(F,\mu)$;
\item for all collections of disjoint Borel sets $F_j \subset F$,
\begin{equation} \label{eq:meas}
\lim_{N \to \infty} \limsup_k \left( \frac 1k \sum_m G(\lambda_m)^{1+ \frac 1k} \int_{\cup_{j=N}^\infty F_j} |Uh_m(x)|^2 d\mu(x) \right)
= 0 ;
\end{equation}
\item for any sequence $Q_j$ of mutually orthogonal projections belonging to $U^*L^\infty(F,\mu)U$, $\nm{P_NG(D)P_N}_{0} \to 0$ as $N \to \infty$ where $P_N = \sum_{j=N}^\infty Q_j$;
\item $\nu_{G,\omega} \abc \mu$ is a finite Borel measure on $F$ for all $\omega \in DL_2$.
\end{prop2list}
\end{prop}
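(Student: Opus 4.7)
The common thread is the identity
$$
\frac{1}{k}\Tr(P_N G(D)^{1+1/k}P_N) = \frac{1}{k}\sum_m G(\lambda_m)^{1+1/k}\int_{\cup_{j\geq N}F_j}|Uh_m(x)|^2 d\mu(x),
$$
where $P_N := T_{\chi_{\cup_{j\geq N}F_j}} = \sum_{j\geq N}Q_j$; mutually orthogonal projections in $U^*L^\infty(F,\mu)U$ correspond to characteristic functions of disjoint Borel sets. For (i) $\Rightarrow$ (ii), domination yields $\int_{\cup_{j\geq N}F_j}|Uh_m|^2 d\mu \leq \int_{\cup_{j\geq N}F_j} l\, d\mu =: \epsilon_N$, with $\epsilon_N \to 0$ by dominated convergence (the $F_j$ are disjoint and $l$ is integrable). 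Factoring out $\epsilon_N$ leaves $\frac{1}{k}\Tr(G(D)^{1+1/k})$, uniformly bounded in $k$ by Lemma \ref{lemma:cont_prelemma}; the result follows.

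For (ii) $\Rightarrow$ (iii) the main step is bounding the Riesz seminorm $\nm{P_N G(D) P_N}_0$ by the quantity in (ii). Since $P_N G(D) P_N$ and $G(D)^{1/2} P_N G(D)^{1/2}$ are $BA$ and $AB$ for $A = G(D)^{1/2}P_N$, $B = P_N G(D)^{1/2}$, they share non-zero spectrum; the Araki--Lieb--Thirring inequality then gives, for $s > 1$,
$$
\Tr\bigl((P_N G(D) P_N)^s\bigr) = \Tr\bigl((G(D)^{1/2} P_N G(D)^{1/2})^s\bigr) \leq \Tr(G(D)^s P_N^s) = \Tr(P_N G(D)^s P_N).
$$
Combining the bound $\nm{T}_0 \leq e\nm{T}_{Z_1}$ of (\cite{CRSS}, Thm 4.5), the simplification $\nm{T}_{Z_1} = \limsup_{s \to 1^+}(s-1)\Tr(T^s)$ for positive $T$ (using $(s-1)^{1-1/s}\to 1$), and the continuity of $s \mapsto (s-1)\Tr(P_N G(D)^s P_N)$ on $(1, \infty)$ to replace the continuous limsup by the discrete one at $s = 1 + 1/k$, we arrive at
$$
\nm{P_N G(D) P_N}_0 \leq e\limsup_k \frac{1}{k}\Tr(P_N G(D)^{1+1/k}P_N),
$$
which tends to $0$ as $N \to \infty$ by (ii). This is the step I expect to be the main obstacle, since it requires both the Araki--Lieb--Thirring spectral majorisation and the $Z_1$/Riesz seminorm comparison from \cite{CRSS}.

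For (iii) $\Rightarrow$ (iv), finiteness follows from $\nu_{G,\omega}(F) = \Tr_\omega(G(D)) < \infty$, and absolute continuity from $\mu(J) = 0 \Rightarrow T_{\chi_J} = 0$. For countable additivity along a disjoint decomposition $J = \sqcup_j J_j$, write $T_{\chi_J} = P^{(N)} + P_N$ with $P^{(N)} = \sum_{j<N}Q_j$; the orthogonality $P^{(N)}P_N = 0$ combined with cyclicity of $\Tr_\omega$ on $B(H) \times \mathcal{L}^{1,\infty}$ annihilates all cross terms, giving
$$
\nu_{G,\omega}(J) = \sum_{j<N}\nu_{G,\omega}(J_j) + \Tr_\omega(P_N G(D) P_N).
$$
Since $P_N G(D) P_N \geq 0$, property (\ref{eq:genL}) yields $\Tr_\omega(P_N G(D) P_N) \leq \nm{P_N G(D) P_N}_0$, which tends to $0$ by (iii), establishing countable additivity and hence that $\nu_{G,\omega}$ is a finite Borel measure absolutely continuous with respect to $\mu$.
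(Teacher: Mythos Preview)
Your proof is correct and follows the same overall structure as the paper's. Two substitutions of tools are worth noting. For (ii) $\Rightarrow$ (iii) the paper invokes the equality $\limsup_k k^{-1}\Tr((PG(D)P)^{1+1/k}) = \limsup_k k^{-1}\Tr(PG(D)^{1+1/k}P)$ from (\cite{CPS}, Prop~3.6), whereas you obtain only the inequality $\leq$ via Araki--Lieb--Thirring; that suffices here and is more self-contained (incidentally, the $BA$/$AB$ detour through $G(D)^{1/2}P_NG(D)^{1/2}$ is not needed---ALT applies directly to $P_N G(D) P_N$ since $P_N^s = P_N$). For (iii) $\Rightarrow$ (iv) the paper cites the identity $\sup_{\omega\in DL_2}\Tr_\omega(T) = \|T\|_0$ from (\cite{LSS}, Thm~6.4), while you use the elementary bound $\Tr_\omega(T)\leq\limsup_k\gamma_k(T)=\|T\|_0$ coming from (\ref{eq:genL}). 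Both replacements trade an external citation for a weaker but adequate one-sided estimate; the full equalities are what the paper later exploits in Proposition~\ref{prop:suff1} to obtain a partial converse under spectral measurability.
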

\begin{proof}
(i) $\Rightarrow$ (ii) By hypothesis $\int_{J} |(Uh_m)(x)|^2 d\mu(x)
\leq \int_{J} l(x) d\mu(x) =: \mu_l(J)$, where $\mu_l$ is the finite Borel measure on $F$ associated to $l$ and $J$ is a Borel set.  By countable additivity of $\mu_l$,
$\lim_{N \to \infty} \mu_l(\cup_{j=N}^\infty F_j)
= 0$.  Hence
\begin{eqnarray*}
\limsup_k k^{-1} \sum_{m} G(\lambda_m)^{1+ k^{-1}} \int_{ \cup_{j=N}^\infty F_j} |Uh_m(x)|^2 d\mu(x)
& \leq & \mu_l(\cup_{j=N}^\infty F_j) \limsup_k k^{-1} \sum_{m} G(\lambda_m)^{1+ k^{-1}} \\
& \leq & \mu_l(\cup_{j=N}^\infty F_j) \nm{G(D)}_{1,\infty} \to 0
\end{eqnarray*}
as $N \to \infty$.

(ii) $\Rightarrow$ (iii) From the first display in the proof of (\cite{CPS}, Prop 3.6 p.~88), it follows that $\limsup_k k^{-1}\Tr((PG(D)P)^{1+k^{-1}})
= \limsup_k k^{-1}\Tr(PG(D)^{1+k^{-1}}P)$ for all projections
$P \in B(H)$. By (\cite{CRSS}, Thm 4.5)
\begin{eqnarray*}
\nm{P_NG(D)P_N}_0 & \leq & e \limsup_k \frac 1k
\Tr((P_NG(D)P_N)^{1+ \frac 1k}) \\
& = & e \limsup_k \frac 1k
\Tr(P_NG(D)^{1+ \frac 1k}P_N) \\
& = & e \limsup_k \frac 1k \sum_{m} G(\lambda_m)^{1+ \frac 1k} \int_{ \cup_{j=N}^\infty F_j} |Uh_m(x)|^2 d\mu(x)
\end{eqnarray*}
where $Q_j = T_{\chi_{F_j}}$. (iii) now follows from (ii).

(iii) $\Rightarrow$ (iv) Set $P_N := \sum_{j=N}^\infty Q_j$ with $Q_j = T_{\chi_{F_j}}$.
Then $\Tr_{\omega}(P_NG(D)P_N) = \nu_{G,\omega}(\cup_{i=N}^\infty F_j)$.
Note that $\sup_{\omega \in DL_2} \Tr_{\omega}(P_NG(D)P_N) = \nm{P_NG(D)P_N}_0$ from (\cite{LSS}, Thm 6.4 p.~105), 
Hence, if $\nm{P_NG(D)P_N}_0 \to 0$ as $N \to \infty$, then $\nu_{G,\omega}(\cup_{i=N}^\infty F_j) \to 0$ as $N \to \infty$
for any $\omega \in DL_2$.  Thus $\nu_{G,\omega}$ is countably additive.
It is clear that, if $\mu(J) = 0$, $T_{\chi_{J}} = 0$ and hence 
$\nu_{G,\omega}(J)= \Tr_{\omega}(T_{\chi_{J}}G(D)T_{\chi_{J}}) = 0$.
This shows $\nu_{G,\omega} \abc \mu$.
\end{proof}

We recall again from (\cite{CN}, p.~308), \cite{LSS}, the notion of measurability.  We say
$0< G(D) \in \mathcal{L}^{1,\infty}$ is \emph{measurable} if $\Tr_\omega(G(D))$ is the same value for all $\omega \in DL_2$.  The first and third named authors with colleague
A.~Sedaev showed that measurability was equivalent to $\Tr_\omega(G(D)) = \lim_{N \to \infty} \log(1+N)^{-1} \sum_{n=1}^N \mu_n(G(D))$.
We say $G(D)$ is \emph{spectrally
measurable} (for the set $A_1,\ldots,A_n$ with joint spectral representation $U : H \to L^2(F,\mu)$) if $T_{\chi_J}G(D)T_{\chi_J}$ is measurable for all projections $\chi_J$ on $F$, see Definition \ref{dfn:spectral_meas}.  If $G(D)$ is spectrally measurable, $G(D)$ is measurable.  The converse is not true.

\begin{prop} \label{prop:suff1}
Let $G(D)$ be spectrally measurable with respect to the set $A_1,\ldots,A_n$ and the joint spectral representation $U : H \to L^2(F,\mu)$.
Then the statements (ii), (iii), (iv) in Proposition \ref{prop:suff0} are equivalent.
\end{prop}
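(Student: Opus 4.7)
The plan is to exploit Theorem \ref{thm:resPA} to convert the quantity appearing in (ii) into a Dixmier-trace expression, at which point the spectral measurability hypothesis collapses the $\limsup$ to a limit and identifies it with $\nu_{G,\omega}$ evaluated on a decreasing tail. Since Proposition \ref{prop:suff0} already supplies the implications (ii) $\Rightarrow$ (iii) $\Rightarrow$ (iv), only the loop-closing implication (iv) $\Rightarrow$ (ii) remains, and both intermediate equivalences will follow by restricting attention to the appropriate tail projections.

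Fix a family of disjoint Borel sets $\{F_j\}_{j \geq 1} \subset F$, set $Q_j := T_{\chi_{F_j}}$, and $P_N := \sum_{j = N}^\infty Q_j = T_{\chi_{\bigcup_{j \geq N} F_j}}$. Expanding $\Tr(P_N G(D)^{1 + 1/k} P_N)$ in the eigenbasis $\{h_m\}_{m=1}^\infty$ of $D$, using that $G(D)^{1+1/k} h_m = G(\lambda_m)^{1+1/k} h_m$, combined with the trace cyclicity (which applies since $G(D)^{1+1/k} \in \mathcal{L}^1$) and $P_N^2 = P_N$, yields the identity
$$
\frac{1}{k} \sum_{m} G(\lambda_m)^{1 + 1/k} \int_{\bigcup_{j \geq N} F_j} |(Uh_m)(x)|^2 \, d\mu(x) \; = \; \frac{1}{k} \Tr\bigl(P_N G(D)^{1 + 1/k} P_N\bigr).
$$
This rewrites the left-hand side of (ii) in a form to which Theorem \ref{thm:resPA} applies directly.

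Now invoke spectral measurability: by Definition \ref{dfn:spectral_meas}(ii), the operator $P_N G(D) P_N$ is measurable for every $N$. Theorem \ref{thm:resPA} then guarantees that the limit $\lim_k k^{-1} \Tr(P_N G(D)^{1 + 1/k} P_N)$ exists and equals $\Tr_\omega(P_N G(D) P_N) = \nu_{G,\omega}(\bigcup_{j \geq N} F_j)$ for every $\omega \in DL_2$. In particular, the $\limsup$ in (ii) is genuinely a limit and coincides with the $\nu_{G,\omega}$-measure of the tail.

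Assuming (iv), countable additivity of the finite Borel measure $\nu_{G,\omega}$ forces $\nu_{G,\omega}(\bigcup_{j \geq N} F_j) \to 0$ as $N \to \infty$, which delivers (ii) and thereby closes the cycle of equivalences. I expect no serious obstacle: the proof is essentially a bookkeeping exercise combining the trace identity above with Theorem \ref{thm:resPA}. The only point requiring care is to verify that spectral measurability is inherited by every tail projection $P_N \in U^* L^\infty(F,\mu) U$, which is built into Definition \ref{dfn:spectral_meas}(ii) by quantifying over all projections in that algebra.
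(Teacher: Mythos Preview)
Your proposal is correct and follows essentially the same route as the paper: both proofs close the loop by establishing (iv) $\Rightarrow$ (ii), invoking Theorem \ref{thm:resPA} on the tail projection $P_N = T_{\chi_{\cup_{j\geq N} F_j}}$ (whose measurability is supplied by the spectral measurability hypothesis) to identify the $\limsup$ in (ii) with $\nu_{G,\omega}(\cup_{j\geq N} F_j)$, and then letting countable additivity drive this to zero. Your write-up is somewhat more explicit about the eigenbasis expansion of $\Tr(P_N G(D)^{1+1/k} P_N)$, but the argument is the same.
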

\begin{proof}
We are required to show (iv) $\Rightarrow$ (ii).  By spectral
measurability there is a single measure,
\begin{eqnarray*}
\nu_{G,\omega}(J)
& = & \Tr_\omega(T_{\chi_J}G(D)T_{\chi_J}) \\
& \stackrel{\text{(Thm \ref{thm:resPA})}}{=} & \lim_{k \to \infty} k^{-1} \Tr(T_{\chi_J}G(D)^{1+k^{-1}}T_{\chi_J}) \\
& = & \limsup_k \left( \frac 1k \sum_m G(\lambda_m)^{1+ \frac 1k} \int_J |Uh_m(x)|^2 d\mu(x) \right)
\end{eqnarray*}
for a Borel set $J \subset F$.
The equation (\ref{eq:meas}) is obtained by setting $J = \cup^\infty_{j=N} F_j$ for disjoint Borel sets $F_j$ and taking $N \to \infty$.
\end{proof}

We now list some failure criteria
using the eigenvectors of $D$.  

\begin{prop} \label{prop:suff} \label{lemma:Ubnd}
Using the notation of Proposition \ref{prop:suff0},
if
\begin{equation*}
\liminf_{N \to \infty} \liminf_{m \to \infty} \inprod{h_m}{P_Nh_m} = \liminf_{N \to \infty} \liminf_m \int_{\cup_{j=N}^\infty F_j} |(Uh_m)(x)|^2 d\mu(x)
> 0
\end{equation*}
for some sequence of disjoint Borel sets $F_j$ (projections $P_N =
\sum_{j = N}^\infty T_{\chi_{F_j}}$), then $\nu_{G,\mathcal{L}(\bl)}$
is not a measure for any $\bl \in BL \cap DL$.
\end{prop}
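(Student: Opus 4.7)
The plan is to produce a uniform positive lower bound on $\nu_{G,\mathcal{L}(\bl)}(A_N)$ for the sets $A_N := \bigcup_{j \geq N} F_j$ and then to deduce that $\nu_{G,\mathcal{L}(\bl)}$ violates continuity from above. Since the $F_j$ are disjoint, $A_N \downarrow \emptyset$, and any finite Borel measure would force $\nu(A_N) \to 0$; a uniform lower bound on $\nu_{G,\mathcal{L}(\bl)}(A_N)$ therefore contradicts countable additivity.

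First I would note that, because the $F_j$ are disjoint, $P_N = T_{\chi_{A_N}}$ is a projection in $U^* L^\infty(F,\mu) U$, so by definition $\nu_{G,\mathcal{L}(\bl)}(A_N) = \Tr_{\mathcal{L}(\bl)}(P_N G(D) P_N)$. Applying Theorem~\ref{thm:resPA} and expanding the trace in the eigenbasis $\{h_m\}$ of $D$ gives
$$
\nu_{G,\mathcal{L}(\bl)}(A_N) = \bl\!\left( \frac{1}{k} \sum_{m} G(\lambda_m)^{1 + 1/k}\,\inprod{h_m}{P_N h_m} \right).
$$

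Next, set $c := \liminf_N \liminf_m \inprod{h_m}{P_N h_m} > 0$ and fix $\epsilon \in (0, c)$. For every sufficiently large $N$ one may choose $M_N$ so that $\inprod{h_m}{P_N h_m} \geq c - \epsilon$ for all $m \geq M_N$. Split the sum at $M_N$: the finite head $\frac{1}{k} \sum_{m < M_N} G(\lambda_m)^{1+1/k}\,\inprod{h_m}{P_N h_m}$ is a uniformly bounded sequence in $k$ divided by $k$, hence $O(1/k)$, so it is annihilated by any Banach limit. The tail obeys
$$
\frac{1}{k} \sum_{m \geq M_N} G(\lambda_m)^{1+1/k}\,\inprod{h_m}{P_N h_m} \geq (c - \epsilon) \, \frac{1}{k} \sum_{m \geq M_N} G(\lambda_m)^{1+1/k},
$$
and on the right the missing head is again $O(1/k)$. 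Positivity and linearity of $\bl$ combined with Corollary~\ref{cor:resA} applied with $A = I$ then yield
$$
\nu_{G,\mathcal{L}(\bl)}(A_N) \geq (c - \epsilon) \, \Tr_{\mathcal{L}(\bl)}(G(D))
$$
for all $N$ large enough, and letting $\epsilon \downarrow 0$ furnishes the $N$-independent lower bound $c\,\Tr_{\mathcal{L}(\bl)}(G(D))$. Provided $\Tr_{\mathcal{L}(\bl)}(G(D)) > 0$, which is implicit in the statement since otherwise $\nu_{G,\mathcal{L}(\bl)} \equiv 0$ is trivially the zero measure, this prevents $\nu_{G,\mathcal{L}(\bl)}(A_N) \to 0$ and completes the proof.

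The hard part is the careful handling of $\bl$, which is positive and linear but not order-continuous; one cannot invoke monotone convergence and must instead repeatedly exploit that Banach limits kill null sequences in order to discard the truncation heads indexed by $M_N$. Because the cut-off $M_N$ depends on $N$, the outer index $N$ has to be held fixed throughout the $k \to \infty$ Banach-limit computation and only taken to infinity at the very end, after the $N$-independent bound has been extracted.
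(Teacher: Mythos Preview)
Your proof is correct and follows essentially the same route as the paper. The paper simply invokes the already-established estimate (\ref{eq:estn}) (whose derivation is exactly your ``split at $M_N$, Banach limits kill the head'' argument applied to a general $A=A^*$) to obtain the lower bound $\nu_{G,\mathcal{L}(\bl)}(A_N) \geq \liminf_m \langle h_m, P_N h_m\rangle \cdot \Tr_{\mathcal{L}(\bl)}(G(D))$ and then concludes non-countable-additivity; you have re-derived that estimate inline for the specific case $A = P_N$, and you also make explicit the standing assumption $\Tr_{\mathcal{L}(\bl)}(G(D)) > 0$ that the paper records when proving (\ref{eq:estn}).
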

\begin{proof}
From an identical argument for the estimate (\ref{eq:estn}), for any $\bl \in BL \cap DL$ and Borel set $J \subset F$,
\begin{multline*}
\liminf_m \int_J |Uh_m(x)|^2d\mu(x) \bl \left( \frac 1k \Tr(G(D)^{1 + \frac 1k}) \right) \\
\leq \ \   \nu_{G,\mathcal{L}(\bl)}(J)  \ \ \leq  \ \ \ \ \ \ \ \ \ \ \ \ \ \ \ \ \ \ \\
 \limsup_m \int_J |Uh_m(x)|^2d\mu(x)
\bl \left( \frac 1k \Tr(G(D)^{1 + \frac 1k}) \right) .
\end{multline*}
By this estimate and the hypothesis, $\nu_{G,\mathcal{L}(\bl)}$ is not countably additive.
\end{proof}

\subsection{Weak Convergence and Spectral Measurability} \label{sec:3.3} \label{sec:3.2}

\medskip We recall from, Remark \ref{rem:RNder}, the Radon-Nikodym derivatives
$v_s = \mathcal{F}_D(G^s) = d\mu_s / d\mu$, $s > 1$.

\begin{lemma} \label{thm:conv}
Let $0 < G(D) \in \mathcal{L}^{1,\infty}$.  If
$v := \lim_{k \to \infty} k^{-1} v_{1+k^{-1}}$
exists, where the limit is taken in the weak (Banach) topology
$\sigma(L^1(F,\mu),L^\infty(F,\mu))$, then $T_fG(D)$ is measurable and
$$
\Tr_\omega(T_fG(D)) = \int_F f(x) v(x)d\mu(x)
$$
for all $f \in L_0^2(F,\mu_{1,\infty})$ and $\omega \in DL_2$.
\end{lemma}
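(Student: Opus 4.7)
The plan is to combine Theorem~\ref{cor:2.3a} for step functions with a density argument in $L^2_0(F,\mu_{1,\infty})$. First, I would verify the hypothesis of Theorem~\ref{cor:2.3a} on the level of characteristic functions: for any Borel $J \subset F$, the function $\chi_J$ lies in $L^\infty(F,\mu)$, so weak convergence of $k^{-1} v_{1+k^{-1}} \to v$ in $\sigma(L^1(F,\mu),L^\infty(F,\mu))$ immediately yields
$$
\lim_{k \to \infty}\frac{1}{k}\Tr(T_{\chi_J}G(D)^{1+k^{-1}}) \ = \ \lim_{k\to\infty}\int_F \chi_J \frac{v_{1+k^{-1}}}{k}\,d\mu \ = \ \int_J v\,d\mu.
$$
This is exactly what is required by Theorem~\ref{cor:2.3a} (whose proof invokes the hypothesis only for projections $P \in U^*L^\infty(F,\mu)U$). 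Hence, for any step function $f = \sum_j b_j \chi_{F_j}$ and any $\omega \in DL_2$, linearity together with a second application of the weak convergence gives
$$
\Tr_\omega(T_f G(D)) \ = \ \lim_{k\to\infty}\frac{1}{k}\int_F f\,d\mu_{1+k^{-1}} \ = \ \int_F f\,v\,d\mu.
$$

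To extend from step functions to general $f \in L^2_0(F,\mu_{1,\infty})$, I would pick step functions $f_n$ with $\nm{f-f_n}_{1,\infty,2} \to 0$, which is possible by definition of $L^2_0$. On the operator side, Corollary~\ref{cor:2.3} gives
$$
\nm{T_{f-f_n}G(D)}_{Z_1} \leq \nm{f-f_n}_{1,\infty,2}\,\nm{G(D)}_{Z_1}^{1/2},
$$
and hence $\nm{T_{f-f_n}G(D)}_0 \leq e\,\nm{T_{f-f_n}G(D)}_{Z_1} \to 0$ by (\cite{CRSS}, Thm~4.5). Since every Dixmier trace vanishes on $\mathcal{L}^{1,\infty}_0$ and is therefore dominated by $\nm{\cdot}_0$, this yields $\Tr_\omega(T_{f_n}G(D)) \to \Tr_\omega(T_f G(D))$ for each $\omega \in DL_2$.

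On the integral side the passage to the limit is more delicate, and this is where I expect the main obstacle: the norm $\nm{\cdot}_{1,\infty,2}$ controls $L^2$-norms against each $\mu_s$ but not directly against $v\,d\mu$. I would handle this by a truncation argument to first show $f \in L^2(F,v\,d\mu)$. For each $N>0$, $|f|^2 \wedge N \in L^\infty(F,\mu)$, so the weak convergence combined with the definition of $\nm{\cdot}_{1,\infty,2}$ yields
$$
\int_F (|f|^2 \wedge N)\,v\,d\mu \ = \ \lim_{k\to\infty}\frac{1}{k}\int_F (|f|^2 \wedge N)\,d\mu_{1+k^{-1}} \ \leq \ \liminf_{k\to\infty}\frac{\nm{f}_{2,\mu_{1+k^{-1}}}^2}{k} \ \leq \ \nm{f}_{1,\infty,2}^2,
$$
and sending $N \to \infty$ by monotone convergence gives $\int_F |f|^2 v\,d\mu \leq \nm{f}_{1,\infty,2}^2$. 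Applying the same bound to $f-f_n$ and using Cauchy--Schwarz with $\int_F v\,d\mu < \infty$ (finite by Lemma~\ref{lemma:cont_prelemma}) produces $\int_F f_n\,v\,d\mu \to \int_F f\,v\,d\mu$. Combined with the operator-side convergence, this gives $\Tr_\omega(T_f G(D)) = \int_F f\,v\,d\mu$ for all $\omega \in DL_2$; independence from $\omega$ simultaneously establishes measurability of $T_f G(D)$ and the claimed identity.
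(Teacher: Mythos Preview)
Your proof is correct and follows the same route as the paper's: verify the hypothesis of Theorem~\ref{cor:2.3a} using the weak convergence $k^{-1}v_{1+k^{-1}}\to v$ against bounded test functions, then invoke that theorem to obtain the identity for all $f\in L^2_0(F,\mu_{1,\infty})$ and all $\omega\in DL_2$. The paper's argument is terser than yours: it checks the limit $\lim_k k^{-1}\int_F f\,d\mu_{1+k^{-1}}=\int_F f\,v\,d\mu$ for $f\in L^\infty(F,\mu)$ directly by weak convergence, then simply cites the second part of Theorem~\ref{cor:2.3a} (whose proof already contains the density extension you redo by hand on the operator side) and asserts the final equality $\int_F f\,v\,d\mu$ for $f\in L^2_0$ without further comment. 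Your truncation step, showing $\int_F |f|^2 v\,d\mu\leq \nm{f}_{1,\infty,2}^2$ and then using Cauchy--Schwarz to pass the limit on the integral side, supplies a detail the paper leaves implicit; this is an improvement in rigor, not a different approach.
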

\begin{proof}
The assumption is $V_k := k^{-1} v_{1+k^{-1}}$ is a $\sigma(L^1(F,\mu),L^\infty(F,\mu))$-convergent sequence in $L^1(F,\mu)$
with limit $v$.  By the definition of weak convergence,
$$
\lim_{k \to \infty} \int_F f(x) V_k(x) d\mu(x) = \int_F f(x) v(x) d\mu(x)
$$
for all  $f \in L^\infty(F,\mu)$.  Then
$$
\lim_{k \to \infty} \left( \frac 1k \Tr(T_fG(D)^{1+\frac 1k}) \right)
 =  \lim_{k \to \infty} \int_F f(x) V_k(x) d\mu(x)
 =  \int_F f(x) v(x) d\mu(x)
$$
for all $f \in L^\infty(F,\mu)$.  It follows
$$
\Tr_{\omega}(T_{f}G(D)) = \lim_{k \to \infty} \int_F f(x) V_k(x) d\mu(x) =  \int_F f(x) v(x) d\mu(x)
$$
for all $f \in L_0^2(F,\mu_{1,\infty})$.  The first equality is from
the second part of Theorem \ref{cor:2.3a}.
\end{proof}

There is a partial converse.

\begin{lemma} \label{cor:partconv}
Suppose $D$ is $(A_1,\ldots,A_n,U)$-dominated and $0 < G(D) \in \mathcal{L}^{1,\infty}$ is spectrally measurable (see Definition \ref{dfn:spectral_meas}).
Then $v := \lim_{k \to \infty} k^{-1} v_{1+k^{-1}}$ exists, where the limit is taken in the weak (Banach) topology
$\sigma(L^1(F,\mu),L^\infty(F,\mu))$.
\end{lemma}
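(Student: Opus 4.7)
The plan is to show that $V_k := k^{-1} v_{1+k^{-1}}$ is a uniformly integrable, $L^1$-bounded sequence, and that $\int_F f V_k\,d\mu$ converges as $k \to \infty$ for every $f$ in a dense subset of $L^\infty(F,\mu)$. Combining these yields weak convergence of the full sequence to some $v \in L^1(F,\mu)$.

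First, I would exploit the domination hypothesis. Writing
$$V_k(x) = \frac{1}{k}\sum_{m} G(\lambda_m)^{1+\frac{1}{k}} |(Uh_m)(x)|^2 \leq \frac{l(x)}{k}\sum_{m} G(\lambda_m)^{1+\frac{1}{k}} = l(x) \cdot \frac{1}{k}\Tr(G(D)^{1+\frac{1}{k}}),$$
and using that $\sup_{k} k^{-1}\Tr(G(D)^{1+1/k}) \leq \nm{G(D)}_{1,\infty} + \nm{G(D)}_{1,\infty}^2 =: C'$ (by Lemma \ref{lemma:cont_prelemma} applied with $s = 1 + 1/k$), I get $0 \leq V_k \leq C' l$ pointwise $\mu$-a.e. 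In particular $\{V_k\}$ is bounded in $L^1(F,\mu)$ and uniformly integrable, so by the Dunford--Pettis theorem it is relatively weakly compact in $L^1(F,\mu)$.

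Second, I would compute the limit on characteristic functions. For any Borel $J \subseteq F$,
$$\int_F \chi_J V_k \, d\mu = \frac{1}{k}\Tr(T_{\chi_J} G(D)^{1+\frac{1}{k}}) = \frac{1}{k}\Tr(T_{\chi_J} G(D)^{1+\frac{1}{k}} T_{\chi_J})$$
by the cyclic property of the trace (valid as $T_{\chi_J}$ is a projection and $G(D)^{1+1/k} \in \mathcal{L}^1$). Since $T_{\chi_J} \in U^*L^\infty(F,\mu)U$ is a projection and $G(D)$ is spectrally measurable, Theorem \ref{thm:resPA} applied to $PTP = T_{\chi_J}G(D)T_{\chi_J}$ implies the limit $\lim_k k^{-1}\Tr(T_{\chi_J}G(D)^{1+1/k}T_{\chi_J})$ exists and equals the common value $\Tr_\omega(T_{\chi_J}G(D)T_{\chi_J})$ for all $\omega \in DL_2$. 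By linearity, $\lim_k \int_F g V_k\,d\mu$ exists for every simple function $g$. Using the uniform $L^1$-bound $\nm{V_k}_1 \leq C'\nm{l}_1$ and the density of simple functions in $L^\infty(F,\mu)$ in the sup-norm, a standard $\epsilon$-argument extends the existence of $\lim_k \int_F f V_k\,d\mu$ to every $f \in L^\infty(F,\mu)$; call this limit $\Lambda(f)$.

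Finally, I would identify $\Lambda$ with integration against some $v \in L^1(F,\mu)$. By the Dunford--Pettis step, every subsequence of $\{V_k\}$ has a weakly convergent sub-subsequence in $L^1(F,\mu)$. If $V_{k_j} \rightharpoonup v$, then $\int f v\,d\mu = \lim_j \int f V_{k_j}\,d\mu = \Lambda(f)$ for every $f \in L^\infty(F,\mu)$, which uniquely determines $v$. Hence every weak cluster point of $\{V_k\}$ coincides, and the full sequence converges weakly to $v$. The main obstacle is ensuring both relative weak compactness (handled cleanly by the domination hypothesis through Dunford--Pettis) and uniqueness of the cluster point (handled by spectral measurability, which gives convergence on the dense cone of simple functions); the domination is indispensable, since without an $L^1$-dominating function the sequence $\{V_k\}$ could concentrate mass and fail to have any weak cluster point in $L^1$.
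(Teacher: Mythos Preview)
Your proposal is correct and follows essentially the same approach as the paper: both use the domination hypothesis to control the sequence $V_k$ and spectral measurability (via Theorem~\ref{thm:resPA}) to obtain convergence of $\int_J V_k\,d\mu$ for every Borel set $J$. The only difference is in how the limit is identified: the paper first constructs $v$ as the Radon--Nikodym derivative of the measure $\nu_{G,\omega}$ (a genuine measure by Propositions~\ref{prop:suff0} and~\ref{prop:suff1}) and then asserts that convergence on Borel sets yields $\sigma(L^1,L^\infty)$-convergence, whereas you make this passage explicit by invoking the pointwise bound $V_k \leq C'l$, Dunford--Pettis compactness, and uniqueness of weak cluster points. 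Your argument is thus a more detailed version of the paper's, with the Dunford--Pettis step supplying the justification the paper leaves implicit.
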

\begin{proof}
  Set $V_k := k^{-1} v_{1+k^{-1}}$.  By the proof of
  Proposition~\ref{prop:suff1} there exists a unique measure
  (independent of $\omega \in DL_2$)
\begin{eqnarray*}
\nu_{G,\omega}(J)
& = & \Tr_\omega(T_{\chi_J}G(D)T_{\chi_J}) \\
& = & \lim_{k \to \infty} k^{-1} \Tr(T_{\chi_J}G(D)^{1+k^{-1}}T_{\chi_J}) \\
& = & \lim_{k \to \infty} \int_J V_k(x) d\mu(x),
\end{eqnarray*}
for a Borel set $J$ of $F$.  Let $v$ be the Radon-Nikodym derivative of
$\nu_{G,\omega}$. 
Then,
\begin{equation} \label{eq:15weak}
\lim_{k \to \infty} \int_J (v(x) - V_k(x)) d\mu(x) = 0 .
\end{equation}
Equation (\ref{eq:15weak}) implies $\sigma(L^1(F,\mu),L^\infty(F,\mu))$-convergence.
\end{proof}

\subsection{Proof of Theorem \ref{thm:1b}} \label{sec:4}

With the technical results of the previous sections, we are in a position to prove Theorem \ref{thm:1b} (and Theorem \ref{thm:result2} in the next section).

\medskip (i) \ By the hypothesis that $D$ is $(A_1,\ldots,A_n,U)$-dominated, it follows 
from Proposition \ref{prop:suff0} that
$\nu_{G,\omega} \abc \mu$ is a finite Borel measure.  Let
$v_{G,\omega}$ be the Radon-Nikodym derivative of $\nu_{G,\omega}$.
Let $f \in L^\infty(F,\mu)$.  Take a sequence of step functions
$f_{n} := \sum_{i=1}^{N_n} a_{n,i} \chi_{F_{n,i}} \to f$ in norm.  Then $T_{f_n} \to T_f$ in the uniform norm and
\begin{eqnarray*}
\int_F f(x) v_{G,\omega} d\mu(x) & = & \lim_{n \to \infty} \int_F f_n(x) v_{G,\omega} d\mu(x) \\
& = & \lim_{n \to \infty} \sum_{i=1}^{N_n} a_{n,i} \nu_{G,\omega}(\chi_{F_{n,i}}) \\
& = & \lim_{n \to \infty} \sum_{i=1}^{N_n} a_{n,i} \Tr_{\omega}(T_{\chi_{F_{n,i}}}G(D)) \\
& = & \lim_{n \to \infty}  \Tr_{\omega}(\sum_{i=1}^{N_n} a_{n,i} T_{\chi_{F_{n,i}}}G(D)) \\
& = & \lim_{n \to \infty}  \phi_{\omega}(T_{f_n}) \\
& = & \phi_{\omega}(T_{f}) 
\end{eqnarray*}
by $\phi_\omega \in B(H)^*$.  Finally, if $f \in L^\infty(E,\mu_\eta)$, by Condition \ref{cond:1}, $f \circ e \in L^\infty(F,\mu)$.  It follows from the identification
of $\phi_\omega$ with the measure $\nu_{G,\omega} \abc \mu$ that $\phi_\omega \in \mathcal{M}_*$.

\medskip \noindent (ii) \ The if and only if statement is contained in
Lemma \ref{thm:conv} and Lemma \ref{cor:partconv}. The equality in
Lemma \ref{thm:conv} holds
for any $f \in L^\infty(F,\mu)$.
Finally, if $f \in L^\infty(E,\mu_\eta)$, by Condition \ref{cond:1},
$f \circ e \in L^\infty(F,\mu)$.
\qed

\section{Proofs for Compact Riemannian Manifolds} \label{sec:5}

Let $\TT^n$ be the flat $n$-torus and $\Delta$ be the Hodge Laplacian on $\TT^n$. In this situation
$h_{\mathbf{m}}(\mathbf{x}) = e^{i \mathbf{m} \cdot \mathbf{x}} \in L^2(\TT^n)$,
where $\mathbf{m} = (m_1,\ldots,m_n) \in \ZZ^n$ and $\mathbf{x} \in \TT^n$,
form a complete orthonormal system of eigenvectors of $\Delta$.
Let $M_f$ denote the operator of left multiplication of $f \in L^p(\TT^n)$ on $L^2(\TT^n)$, $1 \leq p \leq \infty$, i.e.~$(M_fh)(\mathbf{x}) = f(\mathbf{x})h(\mathbf{x})$
for all $h \in \Dom(M_f)$ (dense in $L^2(\TT^n)$).  Stronger results
than Theorem~\ref{thm:result2} are possible for the torus.

\begin{cor} \label{cor:3.2}
Let $g(\Delta) \in \mathcal{L}^1(L^2(\TT^n))$.  Then $M_f g(\Delta) \in \mathcal{L}^1(L^2(\TT^n))$ if and only if
$f \in L^2(\TT^n)$ and
$$
\Tr(M_fg(\Delta)) = \Tr(g(\Delta)) \int_{\TT^n} f(\mathbf{x}) d^n \mathbf{x} \ , \ \fa f \in L^2(\TT^n) .
$$
\end{cor}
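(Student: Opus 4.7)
The plan is to reduce to Corollary \ref{cor:2.2} in the specific setting of the flat torus, where the measures $\mu_s$ and $\mu_g$ defined in Section \ref{sec:2.1} collapse to constant multiples of Lebesgue measure. The Fourier basis functions $h_{\mathbf{m}}(\mathbf{x}) = e^{i\mathbf{m}\cdot\mathbf{x}}$ satisfy $|h_{\mathbf{m}}(\mathbf{x})|^2 = 1$ identically on $\TT^n$, so the Radon--Nikodym density of $\mu_s$ computes as
\[
\mathcal{F}_\Delta(|g|^s)(\mathbf{x}) \;=\; \sum_{\mathbf{m} \in \ZZ^n} |g(\lambda_{\mathbf{m}})|^s \;=\; \Tr(|g(\Delta)|^s),
\]
a constant in $\mathbf{x}$. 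Hence $\mu_s = \Tr(|g(\Delta)|^s)\, \mu$ and $\mu_g = \Tr(g(\Delta))\, \mu$, where $\mu$ denotes Lebesgue measure on $\TT^n$.

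Second, since $g(\Delta) \in \mathcal{L}^1 \subset \mathcal{L}^2$ by the standard ideal inclusion, the measures $\mu_1$ and $\mu_2$ are both finite and mutually absolutely continuous with $\mu$ with constant densities. Consequently $L^2(\TT^n,\mu_1) = L^2(\TT^n,\mu_2) = L^2(\TT^n)$ with equivalent norms. Corollary \ref{cor:2.2}(i) then yields the forward implication $M_f g(\Delta) \in \mathcal{L}^1 \Rightarrow f \in L^2(\TT^n,\mu_2) = L^2(\TT^n)$, and Corollary \ref{cor:2.2}(ii) gives the converse $f \in L^2(\TT^n) = L^2(\TT^n,\mu_1) \Rightarrow M_f g(\Delta) \in \mathcal{L}^1$. (Here $T_f = M_f$ under the trivial spectral representation of $L^\infty(\TT^n)$, as noted in Example \ref{ex:sec4}.)

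Third, the trace formula of Corollary \ref{cor:2.2} specialises directly:
\[
\Tr(M_f g(\Delta)) \;=\; \int_{\TT^n} f(\mathbf{x})\, d\mu_g(\mathbf{x}) \;=\; \Tr(g(\Delta)) \int_{\TT^n} f(\mathbf{x})\, d^n\mathbf{x},
\]
which is the desired equality.

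The only subtlety, which I expect to be the most delicate point rather than a genuine obstacle, is that $M_f$ is unbounded when $f \in L^2(\TT^n) \setminus L^\infty(\TT^n)$; the product $M_f g(\Delta)$ must be interpreted as the bounded (indeed trace-class) closure of the densely defined operator on $\Dom(M_f)$, following the convention laid out before Proposition \ref{prop:2.1}. Since Corollary \ref{cor:2.2} was already established under precisely that convention, no additional argument is required beyond unpacking the identifications above.
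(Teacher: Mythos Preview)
Your proof is correct and follows essentially the same approach as the paper, which simply states that the corollary follows by applying Corollary~\ref{cor:2.2} to the torus setting of Example~\ref{ex:sec4}. You have merely spelled out explicitly the identifications $\mu_1 = \Tr(|g(\Delta)|)\,\mu$, $\mu_2 = \Tr(|g(\Delta)|^2)\,\mu$, and $\mu_g = \Tr(g(\Delta))\,\mu$ that make the two implications of Corollary~\ref{cor:2.2} combine into an if-and-only-if, which is exactly what the paper intends by its one-line reference.
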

\begin{proof}
  The corollary follows if Corollary~\ref{cor:2.2} is applied to
  Example~\ref{ex:sec4}.
\end{proof}

\begin{cor} \label{cor:3.3}
Let $0 < G(\Delta) \in \mathcal{L}^{1,\infty}(L^2(\TT^n))$ be measurable.  Then $M_f G(\Delta) \in \mathcal{L}^{1,\infty}(L^2(\TT^n))$ if and only if $f \in L^2(\TT^n)$ and
$$
\phi_\omega(M_f) := \Tr_{\omega}(M_fG(\Delta)) = c \int_{\TT^n} f(\mathbf{x}) d^n \mathbf{x} \ , \ \fa f \in L^2(\TT^n)
$$
where $0 \leq c = \Tr_{\omega}(G(\Delta))$ is a constant for all $\omega \in DL_2$.
\end{cor}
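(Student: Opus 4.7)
The plan is to specialise the general framework of Section \ref{sec:2.4} to the flat torus, where the situation becomes essentially scalar. The eigenfunctions $h_\mathbf{m}(\mathbf{x}) = e^{i\mathbf{m}\cdot\mathbf{x}}$ satisfy $|h_\mathbf{m}|^2 \equiv 1$, so as in Example \ref{ex:torus} the operator $D=\sqrt{\Delta}$ is $(M_{e^{i\theta_1}},\ldots,M_{e^{i\theta_n}},U)$-dominated by $1 \in L^1(\TT^n)$ and Condition \ref{cond:1} is satisfied with the trivial spectral representation. Moreover, the Radon--Nikodym derivatives
\[
v_s = \mathcal{F}_\Delta(G^s) = \sum_\mathbf{m} G(\lambda_\mathbf{m})^s |h_\mathbf{m}|^2 \equiv \Tr(G(\Delta)^s)
\]
are constant functions, so (as computed in Example \ref{ex:sec4}) $\mu_s = \Tr(G(\Delta)^s)\mu$ is a multiple of Lebesgue measure and $L^p(\TT^n,\mu_s) = L^p(\TT^n,\mu_{1,\infty}) = L^p_0(\TT^n,\mu_{1,\infty}) = L^p(\TT^n)$ with equivalent norms.

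The if-and-only-if assertion then follows at once from Corollary \ref{cor:2.3}: on the one hand $M_f G(\Delta) \in \mathcal{L}^{1,\infty}$ forces $f \in L^2(\TT^n,\mu_2) = L^2(\TT^n)$, while conversely $f \in L^2(\TT^n) = L^2(\TT^n,\mu_{1,\infty})$ forces $M_f G(\Delta) \in \mathcal{L}^{1,\infty}$.

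For the trace identity, the measurability of $G(\Delta)$ is, by the Lord--Sedaev--Sukochev theorem recalled in Section \ref{sec:3.2}, equivalent to the existence of $c := \lim_{k\to\infty} k^{-1}\Tr(G(\Delta)^{1+1/k}) = \Tr_\omega(G(\Delta))$ independent of $\omega \in DL_2$. Since $k^{-1} v_{1+1/k}$ is simply the constant $k^{-1}\Tr(G(\Delta)^{1+1/k})$, the sequence $k^{-1}v_{1+1/k}$ converges uniformly to $c$ on $\TT^n$, and a fortiori in the weak topology $\sigma(L^1(\TT^n),L^\infty(\TT^n))$. Applying Lemma \ref{thm:conv} (equivalently, the second part of Theorem \ref{cor:2.3a}) yields, for all $f \in L^2_0(\TT^n,\mu_{1,\infty}) = L^2(\TT^n)$ and all $\omega \in DL_2$,
\[
\Tr_\omega(M_f G(\Delta)) = \int_{\TT^n} f(\mathbf{x}) \cdot c \, d^n\mathbf{x} = c\int_{\TT^n} f(\mathbf{x}) d^n\mathbf{x}.
\]

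The only mildly delicate point is that Lemma \ref{thm:conv} delivers the formula on $L^2_0(\TT^n,\mu_{1,\infty})$; one must confirm this space equals the full $L^2(\TT^n)$, which reduces to the norm equivalence $\nm{\cdot}_{1,\infty,2} \sim \nm{\cdot}_2$ on the torus (from Example \ref{ex:sec4}) and the density of step functions in $L^2(\TT^n)$. A subsidiary remark is that measurability of $G(\Delta)$ automatically upgrades to spectral measurability here, because $\Tr(M_{\chi_J}G(\Delta)^s M_{\chi_J}) = \mu(J)\Tr(G(\Delta)^s)$, so the limit defining $\nu_{G,\omega}(J)$ is $c\mu(J)$ independently of $\omega$. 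Beyond these bookkeeping items, which are all handled by the preceding machinery, I expect no substantive obstacle.
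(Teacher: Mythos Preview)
Your proof is correct and follows essentially the same route as the paper's: the if-and-only-if comes from Corollary~\ref{cor:2.3} together with Example~\ref{ex:sec4}, and the trace identity from Theorem~\ref{cor:2.3a} (equivalently Lemma~\ref{thm:conv}) once one notes that on the torus $k^{-1}v_{1+k^{-1}}$ is the constant $k^{-1}\Tr(G(\Delta)^{1+1/k})$, whose convergence is equivalent to measurability of $G(\Delta)$ --- the paper simply packages this computation as Example~\ref{ex:4.2}. One minor correction: the equivalence between measurability of $G(\Delta)$ and the existence of $\lim_{k} k^{-1}\Tr(G(\Delta)^{1+1/k})$ is Theorem~\ref{thm:resPA} with $P=1$, not the Lord--Sedaev--Sukochev characterisation you cite from Section~\ref{sec:3.2}.
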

\begin{proof}
  The if and only if result is immediate from Example~\ref{ex:sec4}
  and Corollary~\ref{cor:2.3}.  The equality was shown in Example~\ref{ex:4.2} where $T_\Delta$ is replaced, without loss, by $G(\Delta)$.
\end{proof}

\subsubsection*{Proof of Theorem \ref{thm:result2}}

\medskip The statement $M_f T_\Delta \in \mathcal{L}^{1,\infty}$ if and only
if $f \in L^2(M)$ is contained in
Example~\ref{ex:compact_M}.

Also from Example~\ref{ex:compact_M} is the inequality \eqref{eq:manifold_inequality_2},
$$
|\Tr_\omega(M_f T_\Delta)|
\leq \nm{M_f T_\Delta}_{Z_1}
\leq C^{1/2} \nm{f}_2 \nm{M_f T_\Delta}_{Z_1}^{1/2} .
$$
If $C^\infty(M) \ni f_n \to f \in L^2(M)$
in the $L^2$-norm (also in the $L^1$-norm as $M$ is compact),
then, using the above inequality and Connes'
Trace Theorem,
\begin{eqnarray*}
\Tr_\omega(M_{f} T_\Delta)
& = & \lim_{n \to \infty} \Tr_\omega(M_{f_n} T_\Delta) \\
& = & \lim_{n \to \infty} c \int_M f_n(x) |\mathrm{vol}(x)| \\
& = & c \int_M f(x) |\mathrm{vol}(x)| .
\end{eqnarray*}
\qed

\medskip Corollary \ref{cor:result1} is an immediate corollary of Theorem \ref{thm:result2}.

\subsection{Extending to $L^1$} \label{sec:6}

\medskip The sharp result $M_f G(\Delta) \in \mathcal{L}^{1,\infty}(L^2(\TT^n)) \Leftrightarrow f \in L^2(\TT^n)$ in Corollary \ref{cor:3.3} is the extent of the identification between
$\phi_\omega(M_f)$ and the Lebesgue integral of $f$.  We investigate
extensions of the formula $\phi_\omega$ using the symmetrised expression
$G(\Delta)^{1/2} M_{f} G(\Delta)^{1/2}$ in place of $M_f G(\Delta)$.

Let us first demonstrate some properties of the symmetrised expression. For a compact linear operator $A > 0$, set
$\sqw{B}{A} := \sqrt{A}B\sqrt{A}$ for all linear operators $B$ such that $\sqw{B}{A}$ is densely defined on $H$ and has bounded closure.  

\begin{lemma} \label{lemma:sym}
Suppose $B > 0$ and $p \geq 1$.  Then $\sqrt{A}B\sqrt{A} \in \mathcal{L}^p$ (resp. $\mathcal{L}^{1,\infty}$) if and only if $\sqrt{B}A\sqrt{B} \in \mathcal{L}^p$ (resp. $\mathcal{L}^{1,\infty}$).  Moreover, if either condition holds, $\Tr((\sqrt{A}B\sqrt{A})^p) = \Tr((\sqrt{B}A\sqrt{B})^p)$
(resp. $\Tr_\omega(\sqrt{A}B\sqrt{A}) = \Tr_\omega(\sqrt{B}A\sqrt{B})$ for $\omega \in DL_2$).
\end{lemma}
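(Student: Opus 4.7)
The proof reduces to the classical fact that for a bounded operator $Y$ the positive operators $Y^*Y$ and $YY^*$ have identical nonzero spectra with multiplicity, and hence identical singular value sequences. First, I would introduce $Y$ as the closure of $\sqrt{B}\sqrt{A}$. The hypothesis that $\sqw{B}{A} = \sqrt{A}B\sqrt{A}$ is densely defined with bounded closure is, via the identity
\[
\|\sqrt{B}\sqrt{A}\,k\|^2 = \langle k, \sqrt{A}B\sqrt{A}\,k\rangle,
\]
on the natural dense domain, equivalent to boundedness of the quadratic form $k\mapsto\|\sqrt{B}\sqrt{A}\,k\|^2$. Thus $Y$ is bounded and $Y^* = \overline{\sqrt{A}\sqrt{B}}$ is likewise bounded.

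Second, I would verify the identifications
\[
Y^*Y = \overline{\sqrt{A}B\sqrt{A}} = \sqw{B}{A}, \qquad YY^* = \overline{\sqrt{B}A\sqrt{B}} = \sqw{A}{B},
\]
which makes the symmetry of the conclusion manifest. Via the polar decomposition $Y = U|Y|$, the operators $Y^*Y = |Y|^2$ and $YY^* = U|Y|^2U^*$ are unitarily equivalent after restriction to $(\ker Y)^\perp$ and $\overline{\mathrm{ran}\, Y}$ respectively. Consequently their nonzero eigenvalues agree with multiplicity, and the singular value sequences satisfy $\mu_n(\sqw{B}{A}) = \mu_n(\sqw{A}{B})$ for every $n \in \NN$.

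Finally, membership in $\mathcal{L}^p$ and in $\mathcal{L}^{1,\infty}$ is entirely determined by the singular value sequence, and the formulae
\[
\Tr\bigl((\sqw{B}{A})^p\bigr) = \sum_n \mu_n(\sqw{B}{A})^p, \qquad \Tr_\omega(\sqw{B}{A}) = \omega(\gamma(\sqw{B}{A})),
\]
depend only on this sequence. The equalities of $p$-traces and of Dixmier traces follow immediately. The only delicate point—and the place where care is required—is the first step, where one must ensure that $\sqrt{A}$ maps into the form domain of $B$ so that $Y = \overline{\sqrt{B}\sqrt{A}}$ is genuinely a bounded operator; this is precisely what the boundedness of the closure of $\sqw{B}{A}$ provides, and I would spell it out via the polarisation identity to cover the full (not necessarily positive) combinations entering the trace.
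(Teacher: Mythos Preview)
Your proposal is correct and follows essentially the same route as the paper: both identify $\sqw{B}{A} = Y^*Y$ and $\sqw{A}{B} = YY^*$ with $Y = \overline{\sqrt{B}\sqrt{A}}$, and then invoke the fact that an operator and its adjoint share the same singular values. Your version is in fact more careful than the paper's terse proof, since you spell out via the quadratic form why $Y$ is bounded when $B$ is possibly unbounded---a point the paper leaves implicit.
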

\begin{proof}
Note $\sqrt{B}A\sqrt{B} = |\sqrt{A}\sqrt{B}|^2$
and $\sqrt{A}B\sqrt{A} = |\sqrt{B} \sqrt{A}|^2$. Now
$|\sqrt{A} \sqrt{B}|^2 \text{ compact } \Leftrightarrow  \sqrt{A}\sqrt{B} \text{ compact } \Leftrightarrow  \sqrt{B}\sqrt{A} = (\sqrt{A}\sqrt{B})^* \text{ compact } \Leftrightarrow  |\sqrt{B}\sqrt{A}|^2 \text{ compact}$.
All results follow since $\sqrt{A}\sqrt{B}$ and $\sqrt{B}\sqrt{A} = (\sqrt{A}\sqrt{B})^*$ have the same singular values (\cite{S}, p.~3).
See also \cite{Bik98} and references therein.
\end{proof}


\begin{prop} \label{prop:4.1}
Let $0 < g(D) \in \mathcal{L}^1$ and use the notation of Section \ref{sec:tech}.  Then $\sqw{T_{|f|}}{g(D)} \in \mathcal{L}^1$ if and only if $f \in L^1(F,\mu_g)$.  In both cases 
$$
\Tr(\sqw{T_f}{g(D)}) := \Tr(g(D)^{1/2} T_f g(D)^{1/2}) = \int_F f(x) d\mu_{g}(x)
$$
and $\nm{f}_{1,\mu_g} = \nm{\sqw{T_{|f|}}{g(D)}}_1$.
\end{prop}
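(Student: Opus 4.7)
The plan is to reduce to the case $f\geq 0$ and exploit the fact that the operator $\sqw{T_f}{g(D)}=\sqrt{g(D)}\,T_f\,\sqrt{g(D)}$ is then positive, so its trace-class norm coincides with its (possibly infinite) trace. First I would truncate: set $f_N:=f\wedge N\in L^\infty(F,\mu)$. Since $f_N$ is bounded, $T_{f_N}$ is a bounded positive operator and, using $g(D)\in\mathcal{L}^1$, the product $T_{f_N}g(D)\in\mathcal{L}^1$. Cyclicity of the ordinary trace for a bounded operator paired with a trace-class operator gives
\begin{equation*}
\Tr\bigl(\sqrt{g(D)}\,T_{f_N}\sqrt{g(D)}\bigr)=\Tr\bigl(T_{f_N}g(D)\bigr)=\int_F f_N(x)\,d\mu_g(x),
\end{equation*}
where the second equality is the trace formula in Corollary~\ref{cor:2.2} (applicable since $L^\infty(F,\mu)\subset L^2(F,\mu_g)$ because $\mu_g$ is finite).

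Next I would pass to the limit $N\to\infty$. Because $0\le T_{f_N}\nearrow$, the sequence $\sqrt{g(D)}\,T_{f_N}\sqrt{g(D)}$ is an increasing sequence of positive trace-class operators, whose traces obey the standard monotone-convergence identity
\begin{equation*}
\sup_N\Tr\bigl(\sqrt{g(D)}\,T_{f_N}\sqrt{g(D)}\bigr)=\lim_N\int_F f_N\,d\mu_g=\int_F f\,d\mu_g,
\end{equation*}
the last step by monotone convergence of the integrals. If $f\in L^1(F,\mu_g)$, the right side is finite; the uniform bound $\sup_N\|\sqrt{g(D)}\,T_{f_N}\sqrt{g(D)}\|_1<\infty$ together with the monotone increase forces convergence in $\mathcal{L}^1$ to a positive trace-class operator which one identifies as the bounded closure of $\sqrt{g(D)}\,T_f\sqrt{g(D)}$ on the dense domain $\bigcup_N\chi_{\{f\le N\}}(T_f)H$; this simultaneously establishes that $\sqw{T_f}{g(D)}\in\mathcal L^1$ and gives $\|\sqw{T_{|f|}}{g(D)}\|_1=\Tr(\sqw{T_{|f|}}{g(D)})=\|f\|_{1,\mu_g}$. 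Conversely, if $\sqw{T_{|f|}}{g(D)}\in\mathcal{L}^1$, the inequality $0\le\sqrt{g(D)}\,T_{f_N}\sqrt{g(D)}\le\sqw{T_{|f|}}{g(D)}$ yields $\int f_N\,d\mu_g\le\|\sqw{T_{|f|}}{g(D)}\|_1$ uniformly in $N$, whence $f\in L^1(F,\mu_g)$.

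For general complex $f\in L^1(F,\mu_g)$ I would split $f=(f_1-f_2)+i(f_3-f_4)$ with $0\le f_j\le|f|$, so that each $\sqw{T_{f_j}}{g(D)}$ is trace class by the positive case, and apply linearity of the trace to obtain the formula $\Tr(\sqw{T_f}{g(D)})=\int_F f\,d\mu_g$.

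The main technical obstacle is ensuring that $\sqw{T_f}{g(D)}$ is well-defined as a bounded operator when $f\notin L^\infty$, so that ``$\in\mathcal{L}^1$'' is meaningful; the truncation $f_N$ circumvents this because the identity above first \emph{produces} a bounded-closure candidate from a $\mathcal{L}^1$-Cauchy sequence, rather than assuming one. The only other point requiring care is the justification that the $\mathcal{L}^1$-limit of $\sqrt{g(D)}\,T_{f_N}\sqrt{g(D)}$ really is (the closure of) $\sqrt{g(D)}\,T_f\,\sqrt{g(D)}$; this is verified on the core consisting of spectral vectors $\chi_{\{f\le M\}}(T_f)\xi$, where for $N\ge M$ the sequence is eventually constant and agrees with $\sqrt{g(D)}\,T_f\,\sqrt{g(D)}$.
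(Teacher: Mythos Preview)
Your monotone-convergence strategy is a legitimate alternative to the paper's argument, which instead factors through $\mathcal{L}^2$: for $f>0$ the paper uses the equivalences
\[
\sqrt{g(D)}\,T_f\,\sqrt{g(D)}\in\mathcal{L}^1
\ \Longleftrightarrow\
T_{\sqrt{f}}\,\sqrt{g(D)}\in\mathcal{L}^2
\ \Longleftrightarrow\
\sqrt{f}\in L^2(F,\mu_g),
\]
the first coming from Lemma~\ref{lemma:sym} (the operators $\sqrt{A}B\sqrt{A}$ and $\sqrt{B}A\sqrt{B}$ share singular values) and the second from Proposition~\ref{prop:2.1}. The trace formula and the norm identity then drop out directly; the decomposition of a general $f$ into four positive pieces is handled as you do.

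There is, however, a genuine gap in your identification of the $\mathcal{L}^1$-limit. You claim the limit coincides with $\sqrt{g(D)}\,T_f\,\sqrt{g(D)}$ on the ``core'' $\bigcup_M \chi_{\{f\le M\}}(T_f)H$, asserting that for $\xi$ in this set the sequence $\sqrt{g(D)}\,T_{f_N}\,\sqrt{g(D)}\,\xi$ is eventually constant. But for this you would need $T_{f_N}\sqrt{g(D)}\xi=T_f\sqrt{g(D)}\xi$, i.e.\ that $\sqrt{g(D)}\xi$ is supported on $\{f\le N\}$. Since $\sqrt{g(D)}$ does \emph{not} commute with the spectral projections of $T_f$, there is no reason for $\sqrt{g(D)}\xi$ to remain in the range of $\chi_{\{f\le M\}}(T_f)$; your core is not contained in the natural domain $\{\xi:\sqrt{g(D)}\xi\in\mathrm{Dom}(T_f)\}$ of the composition, and the ``eventually constant'' claim fails.

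The cleanest repair is essentially the paper's factorisation. Set $B_N:=T_{\sqrt{f_N}}\sqrt{g(D)}$; then $\|B_N-B_M\|_2^2=\int_F|\sqrt{f_N}-\sqrt{f_M}|^2\,d\mu_g$, so $(B_N)$ is Cauchy in $\mathcal{L}^2$ when $f\in L^1(F,\mu_g)$. Its $\mathcal{L}^2$-limit $B$ satisfies $B\xi=\lim_N T_{\sqrt{f_N}}\sqrt{g(D)}\xi$ for every $\xi$, and pointwise convergence forces $\sqrt{f}\cdot U(\sqrt{g(D)}\xi)\in L^2(F,\mu)$, i.e.\ $T_{\sqrt{f}}\sqrt{g(D)}$ is everywhere defined and equals $B$. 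Then $B^*B=\sqrt{g(D)}\,T_f\,\sqrt{g(D)}$ is the $\mathcal{L}^1$-limit of $B_N^*B_N=\sqrt{g(D)}\,T_{f_N}\,\sqrt{g(D)}$, which is exactly what you wanted---but note that this detour through $\mathcal{L}^2$ is precisely the paper's route.
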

\begin{proof}
Note that $\sqrt{g}(D) \in \mathcal{L}^2$ since $g(D) \in \mathcal{L}^1$.  Let $f > 0$.
Then $\sqrt{g}(D) T_{f} \sqrt{g}(D) \in \mathcal{L}^1 \Leftrightarrow 
T_{\sqrt{f}} \sqrt{g}(D) \in \mathcal{L}^2 \Leftrightarrow \sqrt{f} \in L^2(F,\mu_1)$.  The first equivalence is from the workings of the last lemma.  The second equivalence follows from Proposition
\ref{prop:2.1}.  Note, when applying the Proposition, that $\mu_{2}$ associated to $\sqrt{g}$ is equivalent to $\mu_1 = \mu_g$ associated to $g$.
If $f \in L^1(F,\mu_g)$ is not positive, $|f| \in L^1(F,\mu_g)$, hence $\sqw{T_{|f|}}{g(D)} \in \mathcal{L}^1$.  If $f$ is not
positive but $\sqw{T_{|f|}}{g(D)} \in \mathcal{L}^1$, then $|f| \in L^1(F,\mu_g)$.  Hence $f \in L^1(F,\mu_g)$.  Note, if $f \in L^1(F,\mu_g)$, then $f$ is a linear combination of
four positive integrable functions.  By linearity $\sqw{T_{f}}{g(D)} \in \mathcal{L}^1$.
The trace formula is evident from 
\begin{eqnarray*}
\Tr(\sqw{T_f}{g(D)}) & = & \sum_m \inprod{\sqrt{g}(D) h_m}{T_f \sqrt{g}(D)h_m} \\
& = & \sum_m g(\lambda_m) \int_F \overline{(Uh_m)(x)} f(x) (Uh_m)(x) d\mu(x) \\
& = & \int_F f(x) \sum_m g(\lambda_m) |(Uh_m)(x)|^2 d\mu(x) .
\end{eqnarray*}
\end{proof}

It is now easy to extend Corollary \ref{cor:3.2} and Corollary \ref{cor:3.3}
in the case of the flat $n$-torus $\TT^n$ and Hodge Laplacian $\Delta$.

\begin{cor} \label{cor:4.2}
Let $0 < g(\Delta) \in \mathcal{L}^1(L^2(\TT^n))$.  Then $\sqw{M_{|f|}}{g(\Delta)} \in \mathcal{L}^1(L^2(\TT^n))$ if and only if $f \in L^1(\TT^n)$ and
$$
\Tr(\sqw{M_f}{g(\Delta)}) := \Tr(g(\Delta)) \int_{\TT^n} f(\mathbf{x}) d^n \mathbf{x} \ , \ \fa f \in L^1(\TT^n) .
$$
\end{cor}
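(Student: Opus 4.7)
The plan is to reduce the statement to a direct application of Proposition~\ref{prop:4.1} by exploiting the special structure of the flat torus that was already exposed in Example~\ref{ex:sec4} (and~\ref{ex:4.2}).

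First, I would identify the measure $\mu_g$ on $\TT^n$ associated via $\mathcal{F}_\Delta(g)$ to the eigenbasis $h_{\mathbf{m}}(\mathbf{x}) = e^{i\mathbf{m}\cdot\mathbf{x}}$ of $\Delta$. Since $|h_{\mathbf{m}}(\mathbf{x})|^2 = 1$ pointwise, one has
$$
\mathcal{F}_\Delta(g)(\mathbf{x}) = \sum_{\mathbf{m}\in\ZZ^n} g(\lambda_{\mathbf{m}}) = \Tr(g(\Delta)),
$$
a positive constant (finite by $g(\Delta)\in\mathcal{L}^1$). Hence $\mu_g = \Tr(g(\Delta))\,\mu$, where $\mu$ is normalised Lebesgue measure on $\TT^n$. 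In particular $L^1(\TT^n,\mu_g) = L^1(\TT^n)$ as sets, with equivalent norms $\|f\|_{1,\mu_g} = \Tr(g(\Delta))\,\|f\|_1$.

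Next, I would invoke Proposition~\ref{prop:4.1} directly in the torus setting, where $F=\TT^n$, $T_f = M_f$, and $U$ is the identity. Positivity of $g$ and $g(\Delta)\in\mathcal{L}^1$ give $0<g(\Delta)\in\mathcal{L}^1$ as required by the hypothesis of that proposition. The proposition then yields the equivalence
$$
\sqw{M_{|f|}}{g(\Delta)} \in \mathcal{L}^1(L^2(\TT^n)) \iff f \in L^1(\TT^n,\mu_g) = L^1(\TT^n),
$$
and the trace identity
$$
\Tr(\sqw{M_f}{g(\Delta)}) = \int_{\TT^n} f(\mathbf{x})\,d\mu_g(\mathbf{x}) = \Tr(g(\Delta)) \int_{\TT^n} f(\mathbf{x})\,d^n\mathbf{x}
$$
for every $f \in L^1(\TT^n)$, which is exactly the claimed formula.

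There is no serious obstacle here; the entire content is packaged in Proposition~\ref{prop:4.1} together with the constancy of $\mathcal{F}_\Delta(g)$ on the torus. The only minor point worth writing out is that the trace identity for signed (or complex) $f$ follows by linearity from its validity on the four positive pieces in the standard decomposition, as noted at the end of the proof of Proposition~\ref{prop:4.1}. Thus the argument is essentially a one-line application of earlier results specialised to the flat-torus eigenbasis.
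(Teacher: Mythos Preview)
Your proposal is correct and matches the paper's intended approach: the paper states this corollary without proof, indicating it follows immediately from Proposition~\ref{prop:4.1} together with the torus identity $\mathcal{F}_\Delta(g)\equiv \Tr(g(\Delta))$ already noted in Examples~\ref{ex:sec4} and~\ref{ex:4.2}. Your write-up simply makes this explicit.
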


\begin{cor} \label{cor:4.3}
Let $0 < G(\Delta) \in \mathcal{L}^{1,\infty}(L^2(\TT^n))$ be measurable.
Then we have $\sqw{M_{|f|}}{G(\Delta)^s} = G(\Delta)^{s/2} M_{|f|} G(\Delta)^{s/2} \in \mathcal{L}^{1}(L^2(\TT^n))$ for all $s > 1$
if and only if $f \in L^1(\TT^n)$.  Moreover, setting
$$
\psi_\bl(M_f) := \bl \left( \frac{1}{k} \Tr(\sqw{M_f}{G(\Delta)^{1+\frac{1}{k}}}) \right) \ , \ \fa f \in L^1(\TT^n)
$$
for any $\bl \in BL$,
$$
\psi_\bl(M_f) := \lim_{k \to \infty} \frac{1}{k}
\Tr(\sqw{M_f}{G(\Delta)^{1+\frac{1}{k}}}) = c \int_{\TT^n} f(\mathbf{x}) d^n \mathbf{x} \ , \ \fa f \in L^1(\TT^n)
$$
for a constant $c \geq 0$ independent of $\bl \in BL$.
\end{cor}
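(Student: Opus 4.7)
The plan is to reduce the entire statement to Proposition \ref{prop:4.1} by using the elementary fact that on the flat $n$-torus the measures $\mu_{G^s}$ from Remark \ref{rem:RNder} are constant multiples of Lebesgue measure. Since the eigenfunctions of $\Delta$ are $h_\mathbf{m}(\mathbf{x}) = e^{i\mathbf{m} \cdot \mathbf{x}}$ with $|h_\mathbf{m}|^2 \equiv 1$, one computes
$$
\mathcal{F}_\Delta(G^s)(\mathbf{x}) = \sum_\mathbf{m} G(\lambda_\mathbf{m})^s = \Tr(G(\Delta)^s),
$$
so $\mu_{G^s} = \Tr(G(\Delta)^s)\, \mu$ with $\mu$ normalised Lebesgue measure on $\TT^n$. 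Since $0 < G(\Delta) \in \mathcal{L}^{1,\infty}$ and $s > 1$, the constant $\Tr(G(\Delta)^s)$ is finite and strictly positive, so $L^1(\TT^n, \mu_{G^s}) = L^1(\TT^n)$ with equivalent norms.

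First I would apply Proposition \ref{prop:4.1} with $g = G^s$, using that $G(\Delta)^s \in \mathcal{L}^1$ by (\cite{CRSS}, Thm 4.5(ii)). This immediately yields $\sqw{M_{|f|}}{G(\Delta)^s} \in \mathcal{L}^1$ iff $f \in L^1(\TT^n)$, together with the identity
$$
\Tr(\sqw{M_f}{G(\Delta)^s}) = \int_{\TT^n} f(\mathbf{x}) \, d\mu_{G^s}(\mathbf{x}) = \Tr(G(\Delta)^s) \int_{\TT^n} f(\mathbf{x})\, d^n\mathbf{x}, \quad f \in L^1(\TT^n).
$$
Specialising to $s = 1 + k^{-1}$ and dividing by $k$ gives
$$
\frac{1}{k} \Tr(\sqw{M_f}{G(\Delta)^{1+\frac{1}{k}}}) = \Big(\frac{1}{k} \Tr(G(\Delta)^{1+\frac{1}{k}})\Big) \int_{\TT^n} f(\mathbf{x}) \, d^n\mathbf{x}.
$$

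To finish I would pull the scalar factor through any $\bl \in BL$ by linearity, obtaining $\psi_\bl(M_f) = \bl(k^{-1}\Tr(G(\Delta)^{1+\frac{1}{k}})) \int_{\TT^n} f \, d^n\mathbf{x}$, and then invoke measurability of $G(\Delta)$: the $P = I$ case of Theorem \ref{thm:resPA} forces the limit $c := \lim_{k\to\infty} k^{-1} \Tr(G(\Delta)^{1+\frac{1}{k}})$ to exist, with $c = \Tr_\omega(G(\Delta))$ independent of $\omega \in DL_2$. Hence $\bl$ collapses to $\lim$ and $\psi_\bl(M_f) = c \int_{\TT^n} f \, d^n\mathbf{x}$, with $c$ independent of $\bl$. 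No serious obstacle arises: the argument is just Proposition \ref{prop:4.1} combined with the triviality of the eigenfunction densities on a flat torus, and the only mild subtlety is to observe that the measurability hypothesis on $G(\Delta)$ is precisely what upgrades the $\bl$-expression to an honest limit.
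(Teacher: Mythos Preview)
Your proposal is correct and follows essentially the same approach as the paper: the paper routes the argument through Corollary~\ref{cor:4.2}, which is precisely Proposition~\ref{prop:4.1} specialised to the flat torus using $\mu_{G^s} = \Tr(G(\Delta)^s)\mu$, and then refers to Corollary~\ref{cor:3.3} (ultimately Example~\ref{ex:4.2}) for the existence of the constant $c$. Your justification of the latter via the $P=I$ case of Theorem~\ref{thm:resPA} is in fact more explicit than the paper's cross-reference.
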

\begin{proof}
From Corollary \ref{cor:4.2} it follows
$$
\lim_{k \to \infty} k^{-1} \Tr(\sqw{M_f}{G(\Delta)^{1+k^{-1}}}) 
= \lim_{k \to \infty} k^{-1} \Tr(G(\Delta)^{1+k^{-1}}) \int_{\TT^n} f(\mathbf{x}) d^n \mathbf{x}.
$$
As in Corollary \ref{cor:3.3}, set
$c = \lim_{k \to \infty} k^{-1} \Tr(G(\Delta)^{1+k^{-1}})$.
\end{proof}

\subsubsection*{Proof of Theorem \ref{thm:1b2}}

From Example~\ref{ex:compact_M} we have the bound
$$
c_s \nm{f}_1 \leq \nm{f}_{1,\mu_s} \leq C \nm{f}_1 .
$$
Since $\mu_s = \mu_g$ for $g = (1+x^2)^{-ns/2}$, it
follows from Proposition~\ref{prop:4.1} that
$T_\Delta^{s/2} M_f T_\Delta^{s/2} \in \mathcal{L}^1(L^2(M))$
for all $s > 1$ if and only if $f \in L^1(M)$.

Now, let $L^\infty(M) \ni f_n \to f \in L^1(M)$.  By the above
bound
$$
\Tr(T_\Delta^{s/2} M_{|f-f_n|} T_\Delta^{s/2})
\leq C \nm{f-f_n}_1 .
$$
Therefore
$$
\lim_{n \to \infty} \limsup_{s \to 1^+}
\Tr(T_\Delta^{s/2} M_{|f-f_n|} T_\Delta^{s/2}) = 0 .
$$
Hence
\begin{eqnarray*}
\lim_{s \to 1^+} \Tr(T_\Delta^{s/2} M_{f} T_\Delta^{s/2})
& = & \lim_{n \to \infty} \lim_{s \to 1^+} \Tr(T_\Delta^{s/2} M_{f_n} T_\Delta^{s/2}) \\
& = & \lim_{n \to \infty} \lim_{s \to 1^+} \Tr(M_{f_n} T_\Delta^s) \\
& = & \lim_{n \to \infty} \int_M f_n(x) |\mathrm{vol}|(x) \\
& = & \int_M f(x) |\mathrm{vol}|(x) .
\end{eqnarray*}
\qed

Corollary~\ref{cor:4.3} and Theorem~\ref{thm:1b2} shows that the residue of the zeta function  $\Tr(\sqw{M_f}{T_\Delta^s})$ at $s=1$ is an algebraic
expression that can be identified
with the Lebesgue integral of any integrable function.  The claim of (\cite{GBVF}, Cor 7.22), which must use the symmetrised expression
for any $f \in L^p(M)$, $1 \leq p < 2$, would be that $\Tr_\omega(\sqw{M_f}{T_\Delta}) = \Tr_\omega(T_\Delta^{1/2} M_f T_\Delta^{1/2})$
is also the Lebesgue integral of any integrable function.

The next result shows the claim is false. 

\begin{lemma} \label{lemma:6.3}
Let $\Delta$ be the Hodge Laplacian on the flat 1-torus $\TT$
and $T_\Delta := (1+\Delta)^{-1/2} \in \mathcal{L}^{1,\infty}(L^2(\TT))$.
There is a positive function $f \in L^1(\TT)$
such that the operator $T_{\Delta}^{1/2}M_{f}T_{\Delta}^{1/2}$
is not Hilbert-Schmidt.
\end{lemma}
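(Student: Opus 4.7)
Working in the Fourier basis $\{e^{ikx}\}_{k\in\ZZ}$ of eigenvectors of $\Delta$, one has $\Delta e^{ikx} = k^2 e^{ikx}$, so $T_\Delta^{1/2}$ acts as multiplication by $(1+k^2)^{-1/4}$. For real $f \in L^1(\TT)$ the matrix elements of $S := T_\Delta^{1/2} M_f T_\Delta^{1/2}$ in the (normalised) Fourier basis are proportional to $(1+m^2)^{-1/4}(1+n^2)^{-1/4}\hat{f}(m-n)$, giving
\[
\nm{S}_2^2 \;=\; C\sum_{k \in \ZZ} a_k\, |\hat{f}(k)|^2, \qquad a_k \;:=\; \sum_{n \in \ZZ} \frac{1}{\sqrt{(1+n^2)(1+(n+k)^2)}},
\]
for some $C>0$. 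It therefore suffices to exhibit a non-negative $f \in L^1(\TT)$ for which this sum diverges.

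The first step is the asymptotic lower bound $a_k \gtrsim (\log|k|)/|k|$ as $|k|\to\infty$. This is obtained by a region-splitting estimate: on the range $1 \leq n \leq |k|/3$ one has $(1+n^2)^{-1/2}\asymp n^{-1}$ and $(1+(n+k)^2)^{-1/2} \asymp |k|^{-1}$, so that partial sum is comparable to $|k|^{-1}\sum_{n=1}^{|k|/3} n^{-1} \asymp (\log|k|)/|k|$; the symmetric range $|n+k|\leq |k|/3$ contributes the same order by the substitution $n \mapsto n-k$, while the middle range $|n|,|n+k|\asymp|k|$ and the tails $|n|\gg|k|$ contribute only $O(|k|^{-1})$.

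For the construction of $f$, let $F_N$ denote the non-negative Fejér kernel of order $N$, normalised so that $\nm{F_N}_1 = 1$ and $\hat{F}_N(k) = \max(0,\,1 - |k|/N)$. Choose a rapidly growing sequence $N_j := 2^{2^j}$ and set
\[
f \;:=\; \sum_{j=1}^{\infty} 2^{-j}\, F_{N_j}.
\]
The series converges in $L^1(\TT)$ to a non-negative function with $\nm{f}_1 = 1$. For $|k|$ large, let $j_k$ be the least integer with $N_{j_k} \geq 2|k|$; then $j_k \asymp \log\log|k|$ and $\hat{F}_{N_j}(k) \geq 1/2$ for every $j \geq j_k$, whence
\[
\hat{f}(k) \;\geq\; \sum_{j\geq j_k} 2^{-j-1} \;\geq\; 2^{-j_k - 1} \;\geq\; \frac{c}{\log|k|}
\]
for some $c>0$ and all sufficiently large $|k|$.

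Combining the two estimates,
\[
\nm{S}_2^2 \;\gtrsim\; \sum_{|k|\geq K_0} \frac{\log|k|}{|k|}\cdot \frac{1}{(\log|k|)^2} \;=\; \sum_{|k|\geq K_0}\frac{1}{|k|\log|k|} \;=\; \infty,
\]
so $S$ fails to be Hilbert-Schmidt. The main technical obstacle is the logarithmic enhancement $a_k \gtrsim (\log|k|)/|k|$, which requires careful bookkeeping of the near-diagonal contributions to the double sum; once this is in place, the slow-decay construction via Fejér kernels with doubly-exponentially separated widths produces Fourier coefficients decaying no faster than $1/\log|k|$, and the product with $a_k$ beats harmonic divergence.
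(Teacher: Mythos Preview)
Your argument is correct and takes a genuinely different route from the paper's.

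The paper works in real space: it takes the explicit function $f(t)=|t|^{-1}|\log|t||^{-1-\epsilon}$, passes via Lemma~\ref{lemma:sym} to $T=M_{\sqrt f}(1+\Delta)^{-1/2}M_{\sqrt f}$, and tests $T$ against the orthonormal system $h_n=2^{n/2}\chi_{\{2^{-n-1}\le|t|\le 2^{-n}\}}$ of dyadic shells. A direct real-variable estimate on each shell gives $\langle Th_n,h_n\rangle\gtrsim n^{-\epsilon}$, so $\sum_n|\langle Th_n,h_n\rangle|^2=\infty$ for $\epsilon\le 1/2$. Your approach instead stays in Fourier space: you diagonalise the Hilbert--Schmidt norm as $\|S\|_2^2=C\sum_k a_k|\hat f(k)|^2$, obtain the sharp lower bound $a_k\gtrsim(\log|k|)/|k|$ by splitting the convolution sum, and then engineer $\hat f(k)\gtrsim 1/\log|k|$ through a lacunary superposition of Fej\'er kernels. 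The paper's function is more concrete and its argument needs only diagonal matrix elements (no full HS formula), while your Fourier reduction is more systematic and makes transparent exactly which decay threshold on $\hat f$ forces failure.

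One technical point worth making explicit: for $f\in L^1\setminus L^2$ the composition $T_\Delta^{1/2}M_fT_\Delta^{1/2}e_n$ is not a priori in $L^2$, so the matrix-element computation should be read via the quadratic form $Q(\phi,\psi)=\langle \sqrt f\,T_\Delta^{1/2}\phi,\sqrt f\,T_\Delta^{1/2}\psi\rangle$ (well defined on trigonometric polynomials since $\sqrt f\in L^2$). If $S$ were Hilbert--Schmidt it would be the bounded operator representing $Q$, with $\langle e_m,Se_n\rangle=Q(e_m,e_n)$, and then your divergent sum contradicts $\|S\|_2<\infty$. This is routine but should be said.
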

\begin{proof}
  Fix~$\epsilon > 0$.  We use $\TT \cong [-\frac{1}{2},\frac{1}{2}]' = [-\frac{1}{2},\frac{1}{2}] \, / \! \! \sim$ where the endpoints are identified.  Consider the function $$ f(t) = \frac
  1 { \left| t \right|\, \left| \log |t| \right|^{1 + \epsilon}}. $$ The
  function~$f$ is clearly in $L^1([- \frac 12, \frac 12]')$.  We also
  consider the orthonormal system~$\left\{h_n \right\}_{n =
    1}^\infty$ given by $$ h_n (t) = 2^{n/2} \chi_n (t), $$
  where~$\chi_n$ is the characteristic function for~$2^{-n - 1} \leq \left|
    t \right| \leq 2^{-n}$.  Let us show that
  \begin{equation}
    \label{CFexampleObjective}
    \sum_{n = 1}^\infty \left|  \langle T(h_n), h_n \rangle  \right|^2
    = + \infty ,
  \end{equation}
  which in particular means that~$T:= M_{\sqrt{f}}(1+\Delta)^{-1/2}M_{\sqrt{f}}$ is not Hilbert-Schmidt, see (\cite{GohbergKrein}, Thm~4.3).  The operator~$T$ admits the
  following representation\footnote{The symbol~$x \otimes y$ stands
    for the one-dimensional operator defined by the functions~$x, y
    \in L^2([-\frac 12, \frac 12]')$.}
  \begin{equation}
    \label{Trepresentation}
    T = \sum_{k = - \infty}^{+\infty} \lambda_k \, \sqrt{f}  e_k \otimes
    \sqrt{f} e_k,
  \end{equation}
  where~$\lambda_k = (1 + 4\pi^2 k^2)^{-1/2}$ and~$e_k(t) =
  e^{2\pi i k t}$.

  We employ~(\ref{Trepresentation}) to show~(\ref{CFexampleObjective}).
   For the
  one-dimensional projection~$x \otimes x$, $x \in L^2([-\frac 12,
  \frac 12]')$, we have~$x \otimes x (y) = \langle y, x \rangle x$ for
  every~$y \in L^2([- \frac 12, \frac 12]')$).  Therefore $$ \langle x
  \otimes x (y), y \rangle = \left| \langle x, y \rangle \right|^2 =
  \left| \int_{-\frac 12}^{\frac 12} x(t) \overline{y(t)} \, dt \right|^2. $$
  Consequently,
  \begin{equation}
    \label{CFexampleTmp}
    \langle T(h_n), h_n \rangle = \sum_{k = - \infty}^{+\infty}
    \lambda_k \, \left| \int_{-\frac 12}^{\frac 12} \sqrt{f}(t)\,
       e_k(t)\, h_n(t)\, dt \right|^2. 
  \end{equation}
  In order to estimate the latter integral terms, let us observe that,
  for every~$\left| k \right| \leq 2^{n-3}$, $$ \cos \left( 2 \pi k t
  \right) \geq \frac 12,\ \ 2^{-n-1} \leq \left| t \right| \leq
  2^{-n}. $$ Consequently,
  \begin{multline*}
    \left| \int_{2^{-n-1} \leq \left| t \right| \leq 2^{-n}} \frac
      {2^{n/2}}{\left| t \right|^{1/2} \left| \log |t| \right|^{\frac {1
            +\epsilon}2}} \, e^{2\pi i kt}\, dt \right|^2 \\ \geq
    \left[ \int_{2^{-n-1} \leq \left| t \right| \leq 2^{-n}} \frac
      {2^{n/2}}{\left| t \right|^{1/2} \left| \log |t| \right|^{\frac
          {1+\epsilon}2}} \, \cos \left( 2\pi kt \right)\, dt
    \right]^2 \\ \geq 2^{-n-2} \inf_{2^{-n-1} \leq \left| t \right|
      \leq 2^{-n}} \frac 1 {\left| t \right|\, \left| \log |t|
      \right|^{{1+\epsilon}}} \geq \frac {c_0} {n^{1+\epsilon}},
  \end{multline*}
  for some numerical constant~$c_0 > 0$.  Returning
  to~(\ref{CFexampleTmp}), we see that, for another numerical
  constant~$c_1> 0$, $$ \langle T(h_n), h_n \rangle \geq \frac
  {c_0}{n^{1+\epsilon}} \sum_{\left| k \right| \leq 2^{n-3}}
  {\lambda_k} = \frac {c_0}{n^{1+\epsilon}} \sum_{\left| k \right|
    \leq 2^{n-3}} \frac 1{(1 + 4 \pi^2 k^2)^{\frac 12}} \geq \frac
  {c_1}{n^\epsilon}. $$ From the latter, it clearly follows that the
  series in~(\ref{CFexampleObjective}) diverges for~$\epsilon \leq
  \frac 12$.  It follows $(1+\Delta)^{-1/4}M_{f}(1+\Delta)^{-1/4}$ is not Hilbert-Schmidt by Lemma \ref{lemma:sym}.
\end{proof}

\begin{rems}
It was shown in (\cite{CRSS}, Thm 4.5 p.~266) that
$$
\limsup_{s \to 1^+} (s-1)\Tr(T^{s}) < \infty \Rightarrow 0 < T \in \mathcal{L}^{1,\infty}.
$$
From the first display in the proof of (\cite{CPS}, Prop 3.6 p.~88)
\begin{multline*}
\limsup_{s \to 1^+} (s-1)\Tr(\sqrt{A}T^{s}\sqrt{A}) \\
=  \limsup_{s \to 1^+} (s-1)\Tr((\sqrt{A}T\sqrt{A})^s) < \infty \\
 \Rightarrow 0 < \sqrt{A}T\sqrt{A} \in \mathcal{L}^{1,\infty}
\end{multline*}
for all \emph{bounded} positive operators $0 < A \in B(H)$.
Lemma \ref{lemma:6.3}, in combination with Corollary \ref{cor:4.3}, provides an example where this implication fails for $T \in \mathcal{L}^{1,\infty}$ and $\sqrt{A}$ an \emph{unbounded} positive linear operator.  In particular, from Lemma \ref{lemma:6.3}, we have an example where
$\sqrt{A}T\sqrt{A} \not\in \mathcal{L}^{1,\infty}$ and hence 
$$
\limsup_{s \to 1^+} (s-1)\Tr((\sqrt{A}T\sqrt{A})^{s}) = \infty ,
$$
yet, from Corollary \ref{cor:4.3},
$$
\limsup_{s \to 1^+} (s-1)\Tr(\sqrt{A}T^s\sqrt{A}) < \infty .
$$
\end{rems}

Our final result is that the failure of the symmetrised Dixmier trace formula on the torus is pointed at $L^1(\TT)$.  

\begin{thm}
  \label{TCLonePlusEpsilon}
  Let~$0 < G(\Delta) \in \mathcal{L}^{1, \infty}(L^2(\TT^n))$ be measurable
  and~$f \in L^{1 + \epsilon} (\TT^n)$ for $\epsilon > 0$.  Then~$\sqw{M_f}{G(\Delta)} = G(\Delta)^{1/2}
  M_f G(\Delta)^{1/2} \in \mathcal{L}^{1, \infty}(L^2(\TT^n))$ and
 $$
\Tr_\omega (\sqw{M_f}{G(\Delta)}) =  \Tr_\omega (G(\Delta)^{1/2} M_f
  G(\Delta)^{1/2}) = c \int_{\TT^n} f(\mathbf{x}) \, d^n \mathbf{x} \ , \ \fa f \in L^{1+\epsilon}(\TT^n)
 $$ for a constant~$ 0 \leq c =
  \Tr_\omega (G(\Delta))$ independent of~$\omega \in DL_2$.
\end{thm}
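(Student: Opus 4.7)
The plan is to derive a $\|\cdot\|_{Z_1}$-bound for $G(\Delta)^{1/2} M_f G(\Delta)^{1/2}$ via the Araki-Lieb-Thirring inequality and the explicit symmetric trace formula of Corollary \ref{cor:4.2}, then pass to the Dixmier trace by density. Without loss assume $f \geq 0$. To bypass the unboundedness of $M_f$, truncate $f_N := \min(f,N) \in L^\infty(\TT^n)$; then $M_{f_N}$ is bounded and positive, and Araki-Lieb-Thirring applied to the pair $M_{f_N}, G(\Delta)$ gives, for $s \geq 1$,
\begin{equation*}
\Tr\bigl((G(\Delta)^{1/2} M_{f_N} G(\Delta)^{1/2})^s\bigr) \leq \Tr\bigl(G(\Delta)^{s/2} M_{f_N^s} G(\Delta)^{s/2}\bigr) = \Tr(G(\Delta)^s)\int_{\TT^n} f_N^s\, d^n\mathbf{x},
\end{equation*}
the equality being Corollary \ref{cor:4.2} for $s > 1$. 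Letting $N \to \infty$, both sides increase: on the left, $G(\Delta)^{1/2} M_{f_N} G(\Delta)^{1/2}$ is a monotone sequence of positive compact operators whose singular values increase pointwise by the min-max principle, so their $s$-th trace powers pass through monotonically; on the right, ordinary monotone convergence on $\TT^n$ applies. Hence, for $1 < s \leq 1+\epsilon$,
\begin{equation*}
\Tr\bigl((G(\Delta)^{1/2} M_f G(\Delta)^{1/2})^s\bigr) \leq \Tr(G(\Delta)^s)\int_{\TT^n} f^s\, d^n\mathbf{x} .
\end{equation*}

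Taking $s$-th roots and multiplying by $(s-1)$,
\begin{equation*}
(s-1)\bigl\|G(\Delta)^{1/2} M_f G(\Delta)^{1/2}\bigr\|_s \leq (s-1)^{1-1/s}\bigl[(s-1)\Tr(G(\Delta)^s)\bigr]^{1/s}\|f\|_s .
\end{equation*}
As $s \to 1^+$ the first factor tends to $1$, the second to $\Tr_\omega(G(\Delta))=:c$ by measurability of $G(\Delta)$ (Theorem \ref{thm:resPA} with $P = I$), and $\|f\|_s \to \|f\|_1$ by dominated convergence, since $|f|^s \leq \max(|f|, |f|^{1+\epsilon}) \in L^1(\TT^n)$. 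Thus $\|G(\Delta)^{1/2} M_f G(\Delta)^{1/2}\|_{Z_1} \leq c\|f\|_1$, and by (\cite{CRSS}, Thm 4.5) the symmetrised operator belongs to $\mathcal{L}^{1,\infty}(L^2(\TT^n))$ with Riesz seminorm bounded by $ec\|f\|_1$.

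To compute the Dixmier trace, approximate $f$ by $f_n \in L^\infty(\TT^n)$ in the $L^{1+\epsilon}$-norm (possible by compactness of $\TT^n$). For bounded $f_n$, cyclicity of $\Tr_\omega$ together with Corollary \ref{cor:3.3} gives $\Tr_\omega(G(\Delta)^{1/2} M_{f_n} G(\Delta)^{1/2}) = \Tr_\omega(M_{f_n} G(\Delta)) = c\int_{\TT^n} f_n\, d^n\mathbf{x}$. Applying the seminorm bound of the preceding paragraph to the positive and negative parts of $f - f_n \in L^{1+\epsilon}$ yields
\begin{equation*}
\bigl|\Tr_\omega(G(\Delta)^{1/2} M_{f - f_n} G(\Delta)^{1/2})\bigr| \leq ec\|f - f_n\|_1 \leq ec\|f - f_n\|_{1+\epsilon} \to 0,
\end{equation*}
and $\int f_n \to \int f$ by the same convergence, delivering the claimed identity.

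The principal obstacle is the invocation of Araki-Lieb-Thirring for the unbounded operator $M_f$: both sides of the ALT estimate must survive the truncation limit $f_N \nearrow f$, which on the operator side rests on the monotone convergence of singular values under an increasing sequence of positive compact operators together with verification that the limit $G(\Delta)^{1/2} M_f G(\Delta)^{1/2}$ coincides as an operator with the supremum of its bounded approximants.
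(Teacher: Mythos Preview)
Your argument is correct and takes a genuinely different route from the paper. The paper obtains the key Schatten estimate by a bilinear complex interpolation between $\|M_f G\|_\infty \leq \|f\|_\infty \|G\|_\infty$ and $\|M_f G\|_2 \leq \|f\|_2 \|G\|_2$, then factorises $f = f_1 f_2$ with $f_1 \in L^{2+\epsilon_1}$, $f_2 \in L^2$ and splits $G(\Delta)^s = G(\Delta)^{s/s_1} G(\Delta)^{s/s_2}$ with matching exponents, finally invoking a three-line-type lemma (Lemma~\ref{IIIlineLemma}) to pass from the asymmetric product $G_1 M_f G_2$ to the symmetric $G^{1/2} M_f G^{1/2}$. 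Your use of the Araki--Lieb--Thirring inequality $\Tr((G^{1/2} M_{f_N} G^{1/2})^s) \leq \Tr(G^{s/2} M_{f_N}^s G^{s/2})$ combined with the explicit torus identity of Corollary~\ref{cor:4.2} collapses all of this into a single line and even yields the sharper-looking bound $\|G^{1/2} M_f G^{1/2}\|_{Z_1} \leq c\|f\|_1$ (valid for $f \in L^{1+\epsilon}$, not $L^1$, since the derivation requires $\int f^s < \infty$ for $s$ near $1$; this is consistent with Lemma~\ref{lemma:6.3}). The price is that your argument is tied to the torus through Corollary~\ref{cor:4.2}, whereas the interpolation machinery in the paper is in principle more portable.

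Regarding the obstacle you flag: both proofs face it, and both resolve it the same way --- the paper also truncates to bounded $f_N$ and passes to the limit via the noncommutative Fatou lemma. Your sketch is adequate: the uniform $\mathcal{L}^s$-bound from ALT gives $\sup_N \|T_N\| < \infty$, so $T_N \nearrow T$ strongly to a bounded positive operator; min--max plus Dini on finite-dimensional subspaces gives $\mu_k(T_N) \nearrow \mu_k(T)$, and monotone convergence for the series finishes the trace limit. The identification of $T$ with $G^{1/2} M_f G^{1/2}$ is then a matter of convention (form closure, or equivalently via Lemma~\ref{lemma:sym} and $M_{\sqrt f}$), exactly as in the paper.
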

\begin{proof}
  Let~$R$ be the von Neumann algebra generated by the spectral projections of~$\Delta$.  Note that
  the subspace~$R \cap E$ is complemented in~$E$, for every symmetric
  ideal~$E$ of compact operators.  Note also that the subspace~$R \cap E$ is isomorphic to the sequence space~$\ell_E$.

  Let us now consider the bilinear operator $$ T(f, G) = M_f G,\
  \ f \in L^2(\TT^n),\ G \in R \cap \mathcal{L}^\infty. $$
  Here $\mathcal{L}^\infty$ denotes the bounded operators.  The
  following relations establish the boundedness of the
  operator~$T$ with different combinations of spaces
  \begin{align}
    T: &\, L^\infty(\TT^n) \times \mathcal{L}^\infty \mapsto \mathcal{L}^\infty,\ \
    \| T (f, G) \|_\infty \leq \| f \|_\infty \,
    \| G \|_{\infty} \label{BddRelation} \\
    T: &\, L^2(\TT^n) \times \mathcal{L}^2 \mapsto \mathcal{L}^2,\ \ \| T (f, G)
    \|_2 \leq \| f \|_2 \,
    \|  G \|_2 . \label{Prop42corollary}
  \end{align}
  Relation~(\ref{BddRelation}) is evident
  and~(\ref{Prop42corollary})~follows from Proposition~\ref{prop:2.1}.
  Applying bilinear complex interpolation,
  see~(\cite{BergLofstrom}, Thm~4.4.1), to the pair of
  relations~(\ref{BddRelation}) and~(\ref{Prop42corollary}) yields
  \begin{equation}
    \label{InterpolationI}
    \| M_f G \|_p \leq \| f \|_{p} \,
    \| G \|_p,\ \ f \in L^p(\TT^n),\ G \in R \cap \mathcal{L}^p,\
    2 \leq p \leq \infty. 
  \end{equation}
  Furthermore, it follows from the proof of Corollary~\ref{cor:2.3}
  that
  \begin{equation}
    \label{InterpolationII}
    \| M_f G \|_p \leq \| f \|_2
    \, \| G \|_p,\ \ f \in L^2(\TT^n),\ G \in R \cap
    \mathcal{L}^p,\ 1 < p \leq 2.  
  \end{equation}

  Let us fix positive~$f \in L^{1 + \epsilon} (\TT^n)$.
  We also
  fix~$0 < G(\Delta) \in \mathcal{L}^{1, \infty}$ and a
  factorization~$f = f_1 f_2$ such that $$ \| f \|_{1+
    \epsilon} = \| f_1 \|_{2 + \epsilon_1} \|
    f_2 \|_{2}, $$ for some~$\epsilon_1 > 0$.
  
  Let us fix numbers~$s, s_1, s_2 > 1$ such that~$s^{-1} = s^{-1}_1 +
  s^{-1}_2$ and~$2s < s_1 < 2 + \epsilon_1$, $s_2 < 2$.  Such numbers
  can always be found if~$s$ is sufficiently close to~$1$.  Finally, set
  $$
  G_1 = G(\Delta)^{s/s_1} \ \ \text{and}\ \ G_2 = G(\Delta)^{s/s_2}. $$
  
  Now we can estimate $$ \| G_1 M_f G_2 \|_s \leq \| G_1
    M_{f_1} \|_{s_1} \| M_{f_2} G_2 \|_{s_2} \leq
  \| f_1 \|_{s_1} \, \| G_1 \|_{s_1}
  \| f_2 \|_{2} \, \| G_2 \|_{s_2}, $$ where
  the last estimate is due to~(\ref{InterpolationI})
  and~(\ref{InterpolationII}).  Furthermore, since~$\| f_1
  \|_{s_1} \leq \| f_1 \|_{2 + \epsilon_1}$, we
  obtain $$
\| G_1 M_f G_2 \|_s \leq \| f \|_{1
    + \epsilon} \, \| G(\Delta)^s \|_1^{1/s_1} \, \| G(\Delta)^s
  \|_1^{1 / s_2} = \| f \|_{1 + \epsilon} \, \|
    G(\Delta) \|_s. $$    
Set $f_N(x) := f(x) \chi_{\inset{y}{f(y) \leq N}}(x)$, $N \in \NN$.  Then
$\| G(\Delta)^{1/2} M_{f_N} G(\Delta)^{1/2} \|_s \leq \| G_1 M_{f_N} G_2 \|_s$ by an application of Lemma~\ref{IIIlineLemma} using
$\theta = 1 - 2s/s_1$.
Using the noncommutative Fatou Lemma, (\cite{S}, Thm 2.7(d)),
\begin{eqnarray*}
\| G(\Delta)^{1/2} M_{f} G(\Delta)^{1/2} \|_s
& \leq & \sup_N \| G(\Delta)^{1/2} M_{f_N} G(\Delta)^{1/2} \|_s \\
& \leq & \sup_N \| f_N \|_{1 + \epsilon} \, \|G(\Delta) \|_s
    = \| f \|_{1 + \epsilon} \, \|  G(\Delta) \|_s .
\end{eqnarray*}
Finally, recalling from (\ref{eq:Z_1}) that $$\| G(\Delta) \|_{Z_1} = \limsup_{s \to 1^+} (s - 1) \, \| G(\Delta) \|_s, $$ we arrive at
 \begin{equation} \label{eq:epsilonest}
    \|
    G(\Delta)^{1/2} M_f G(\Delta)^{1/2} \|_{Z_1} \leq
    \| f \|_{1 + \epsilon} \, \| G(\Delta)
  \|_{Z_1}.
 \end{equation}
It follows that $G(\Delta)^{1/2} M_f G(\Delta)^{1/2} \in \mathcal{L}^{1,\infty}$
from (\cite{CRSS}, Thm 4.5).

  The trace identity follows from (\ref{eq:epsilonest}) and Corollary \ref{cor:3.3}.  In particular, take $L^\infty(\TT^n) \ni f_N \nearrow f \in L^{1+\epsilon}(\TT^n)$ as above with $\nm{f-f_N}_{1+\epsilon} \to 0$ as $N \to \infty$ by the Monotone Convergence Theorem.
Then $|\Tr_\omega(G(\Delta)^{1/2} M_{f - f_N} G(\Delta)^{1/2})| \leq e \left\| f -f_N \right\|_{1 + \epsilon} \, \left\| G(\Delta)  \right\|_{Z_1} \to 0$ as $N \to \infty$ by (\ref{eq:epsilonest}) and the fact $\nm{\cdot}_0 \leq e \nm{\cdot}_{Z_1}$ (\cite{CRSS}, Thm 4.5).  Employing Corollary \ref{cor:3.3}
  for $M_{f_N} \in B(L^2(M))$,
  \begin{eqnarray*}
  \Tr_\omega(G(\Delta)^{1/2} M_f G(\Delta)^{1/2}) & = & \lim_{N \to \infty}   \Tr_\omega(G(\Delta)^{1/2} M_{f_N} G(\Delta)^{1/2}) \\
  & \stackrel{\text{(Lemma \ref{lemma:sym})}}{=} & \lim_{N \to \infty}  \Tr_\omega(M_{f_N}^{1/2} G(\Delta) M_{f_N}^{1/2}) \\
  & = & \lim_{N \to \infty}  \Tr_\omega(M_{f_N} G(\Delta)) \\
  & \stackrel{\text{(Cor \ref{cor:3.3})}}{=} & c \lim_{N \to \infty} \int_{\TT^n} f_N(\mathbf{x}) \, d^n \mathbf{x} \\
  & = & c \int_{\TT^n} f(\mathbf{x}) \, d^n \mathbf{x} .
  \end{eqnarray*}
Recall that $f$ was positive.  By linearity, the result follows
for all $f \in L^{1+\epsilon}(\TT^n)$. 
\end{proof}

\begin{lemma}
  \label{IIIlineLemma}
  If $0 < B \in B(H)$ and $A = A^* \in B(H)$,
  then $$ \left\| B^{1/2} A
    B^{1/2} \right\|_E \leq \left\| B^{1/2 - \theta /2} A B^{1/2 +
      \theta/2} \right\|_E,\ \ 0 < \theta < 1. $$ Here~$E$ is a
  symmetric ideal of compact operators with symmetric norm $\| \cdot \|_E$.
\end{lemma}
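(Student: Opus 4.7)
My plan is to apply Hadamard's three-lines theorem to the $B(H)$-valued analytic function
\[
F(z) \;=\; B^{(1-z)/2}\, A\, B^{(1+z)/2}
\]
on the closed strip $S_\theta := \{z \in \CC : -\theta \leq \Re z \leq \theta\}$. Since $B > 0$ is bounded and the exponents $(1\pm z)/2$ have positive real parts throughout $S_\theta$, the factors $B^{(1\pm z)/2}$ are bounded operators depending analytically on $z$, so $F$ is analytic in the interior of $S_\theta$ and norm-continuous up to the boundary.

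The first step is to show that $\|F(z)\|_E$ is constant on each boundary line and that the two constants coincide. For $z = \theta + is$ one factors
\[
F(\theta + is) \;=\; B^{-is/2}\, F(\theta)\, B^{is/2}.
\]
Spectrally $|\lambda^{is/2}| = 1$ for $\lambda > 0$, so $B^{is/2}$ acts as a unitary on $\overline{\mathrm{range}(B)}$; since $F(\theta)$ is supported there (from $B = PB = BP$ for the range projection $P$), unitary invariance of the symmetric norm $\|\cdot\|_E$ gives $\|F(\theta + is)\|_E = \|F(\theta)\|_E$. The same reasoning yields $\|F(-\theta + is)\|_E = \|F(-\theta)\|_E$. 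The self-adjointness of $A$ then equates the two boundary constants via
\[
F(-\theta) \;=\; B^{(1+\theta)/2}\, A\, B^{(1-\theta)/2} \;=\; \bigl( B^{(1-\theta)/2}\, A\, B^{(1+\theta)/2} \bigr)^{\!*} \;=\; F(\theta)^{*},
\]
combined with $\|T^*\|_E = \|T\|_E$ (singular values are preserved under adjoints). Hence $\|F\|_E$ is identically $\|F(\theta)\|_E$ on $\partial S_\theta$.

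Assuming $F(\pm\theta) \in E$ (otherwise the lemma is vacuous), I would now invoke an $E$-valued Hadamard three-lines principle: for each $G$ in the unit ball of the K\"othe dual $E^\times$ (paired with $E$ via the operator trace), the scalar function $z \mapsto \Tr(G\, F(z))$ is bounded and analytic on $S_\theta$ with boundary modulus at most $\|F(\theta)\|_E$, so the classical Hadamard principle gives $|\Tr(G F(t))| \leq \|F(\theta)\|_E$ for every real $t \in [-\theta,\theta]$. Taking the supremum over $G$ yields $\|F(t)\|_E \leq \|F(\theta)\|_E$, and evaluating at $t = 0$, where $F(0) = B^{1/2} A B^{1/2}$, produces exactly the asserted inequality.

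The main technical hurdle is verifying the growth hypothesis required for Hadamard's theorem on the strip, namely a uniform bound on $F(z)$ (equivalently on the scalar functions $\Tr(G F(z))$) as $|\Im z| \to \infty$. This is cleanly handled because $\|B^{(1\pm z)/2}\|$ depends only on $\Re z$ via the estimate $\|B^{(1\pm z)/2}\| \leq \|B\|^{(1\pm \Re z)/2}$, so $F$ is uniformly bounded on $S_\theta$ in the operator norm; combined with $G \in E^\times$, this suffices to rule out exponential growth of the scalar analytic functions. A secondary care-point, disposed of by passing to $\overline{\mathrm{range}(B)}$ as above, is that $B$ need not be injective; on $\ker B$ all operators in sight vanish identically.
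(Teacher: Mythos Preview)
Your three-lines approach is correct in outline and takes a genuinely different route from the paper. The paper instead invokes a ready-made Cauchy--Schwarz--type inequality (\cite{CaPoSu}, Lemma~25): for positive bounded $B_0,B_1$ and bounded $C$,
\[
\|B_0^{1/2} C B_1^{1/2}\|_E \;\leq\; \|B_0 C\|_E^{1/2}\,\|C B_1\|_E^{1/2},
\]
applied with $C = B^{(1-\theta)/2} A B^{(1-\theta)/2}$ and $B_0 = B_1 = B^\theta$. The two factors on the right are then $\|B^{(1+\theta)/2} A B^{(1-\theta)/2}\|_E$ and $\|B^{(1-\theta)/2} A B^{(1+\theta)/2}\|_E$, which coincide because $A=A^*$ makes these operators mutual adjoints. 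So the paper's proof is a two-line appeal to an external lemma (itself usually proved by a three-lines argument), whereas your argument is self-contained at the cost of the analytic machinery.

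One technical point in your sketch needs care. You invoke the duality $\|T\|_E = \sup_{\|G\|_{E^\times}\le 1}|\Tr(GT)|$, but for a general symmetric ideal this identity requires the Fatou property (equivalently $E=E^{\times\times}$), which the lemma does not assume; moreover, for interior $z$ you have not shown $F(z)\in E$, so $\Tr(GF(z))$ need not even be defined for arbitrary $G\in E^\times$. Both issues vanish if you test only against \emph{finite-rank} $G$: with $\|G\|\le 1$ and $\mathrm{rank}\,G\le k$ the function $z\mapsto\Tr(GF(z))$ is automatically analytic and uniformly bounded on $S_\theta$ by your operator-norm estimate, and taking the supremum recovers the $k$-th Ky Fan norm, giving $\|F(0)\|_{(k)}\le\|F(\theta)\|_{(k)}$ for every $k$. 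The Ky Fan dominance principle then yields $\|F(0)\|_E\le\|F(\theta)\|_E$ for \emph{every} symmetric norm $\|\cdot\|_E$, with no Fatou hypothesis needed.
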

\begin{proof}
  It was proven in~(\cite{CaPoSu}, Lemma 25) that, for positive
  bounded operators~$B_0, B_1$ and a bounded operator~$C$, the following
  estimate is valid $$ \| B_0^{1/2} C B_1^{1/2} \|_E \leq
  \, \left\| B_0 C \right\|_E^{1/2} \, \left\| C B_1
  \right\|^{1/2}_E. $$ Now, the lemma follows if we apply the estimate
  above to the operators $$ C = B^{1/2 - \theta /2} A B^{1/2 - \theta
    /2} \ \ \text{and}\ \ B_0 = B_1 = B^\theta, $$ and observe
  that~$A$ is selfadjoint.
\end{proof}



\end{document}